\documentclass[11pt,reqno]{amsart}
\usepackage{mathrsfs}
\usepackage{hyperref}

\usepackage{amsfonts}
\usepackage{amsmath}
\usepackage{amssymb}
\usepackage{multicol}
\usepackage{stmaryrd}
\usepackage{cite}
\usepackage{epsfig}
\usepackage{color}
\usepackage{graphics}
\usepackage{graphicx}
\usepackage{epstopdf}
\usepackage{multicol,graphics}
\newcommand\bes{\begin{eqnarray}}
\newcommand\ees{\end{eqnarray}}

\newtheorem{theorem}{Theorem}[section]
\newtheorem{lemma}[theorem]{Lemma}
\newtheorem{corollary}[theorem]{Corollary}

\newtheorem{remark}[theorem]{Remark}
\newtheorem{proposition}[theorem]{Proposition}

\numberwithin{equation}{section}
\allowdisplaybreaks

\allowdisplaybreaks[4]
\newcommand{\rd}{\mathrm{d}}
\newcommand{\lf}{\left}
\newcommand{\rr}{\right}
\begin{document}

\title[nonlocal epidemic model with a new free boundary condition]{Dynamics of a nonlocal epidemic model with a new free boundary condition, part 1: Spreading-vanishing dichotomy}

\author[Y. Chen, Y. Du, W.-T. Li and R. Wang]{Yao Chen$^{1}$, Yihong Du$^{2}$, Wan-Tong Li$^{1}$ and Rong Wang$^{1, *}$}
\thanks{\hspace{-.6cm}
$^1$ School of Mathematics and Statistics, Lanzhou University, Lanzhou, Gansu 730000, P. R. China.}
\thanks{\hspace{-.6cm}
$^2$ School of Science and Technology, University of New England, Armidale, NSW 2351, Australia.}
\thanks{\hspace{-.6cm}
$^*$ {Corresponding author} (wrong@lzu.edu.cn)}

\date{\today}

\begin{abstract}
This paper investigates the long-time dynamics of a nonlocal epidemic model with free boundaries, where a pathogen with density $u(t,x)$ and the infected humans with density $v(t,x)$ evolve according to a reaction-diffusion system with nonlocal diffusion over a one dimensional interval $[g(t), h(t)]$, which represents the epidemic region expanding through its boundaries $x=g(t)$ and $x=h(t)$, known as free boundaries. Such a model  with free boundary conditions based on those of Cao et al. \cite{fb27} was considered by several works. Inspired by recent works
 of Feng et al. \cite{fb20} and Long et al. \cite{fb5}, we propose a new free boundary condition, where the expansion rate of the epidemic region, determined by $h'(t)$ and $g'(t)$,   is proportional to a linear combination of the outward flux of the pathogen \(u\) through the range boundary (as in \cite{fb27}) and the weighted total population of infected individuals \(v\) within the region (as in \cite{fb5}). We prove that the system under this new free boundary condition is well-posed, and its long-time dynamical behavior is characterized by a spreading-vanishing dichotomy. Moreover, we obtain sharp criteria for this dichotomy, including a sharp threshold in terms of the initial data $(u_0,v_0)$; and  by studying a related eigenvalue problem, we also find a sharp threshold in terms of the diffusion rate, which complements  related results in Nguyen and Vo \cite{fb7}. This is Part $1$ of a two part series. In Part $2$, we will determine the spreading speed of the model when spreading occurs, and  for some typical classes of kernel functions, we will obtain the precise rates of accelerated spreading.

\vspace{1em}
\textbf{Keywords}: Nonlocal diffusion; Epidemic model; Free boundary

\textbf{2020 Mathematics Subject Classification}: 35K57, 35R35, 92D30

\end{abstract}

\maketitle

\tableofcontents

\section{Introduction}\label{Int}
\noindent

To better understand the spreading of epidemic diseases, various mathematical models have been developed to capture key features of disease transmission dynamics. Among them, one important class focuses on modeling interactions between human populations and their environment. A classical example is the cholera transmission model proposed by Capasso and Paveri-Fontana \cite{fb13} in 1979, which describes the faecal-oral transmission mechanism and takes the form:
\begin{equation}\label{1}
u^{\prime}(t)=-a u(t)+e v(t), \quad v^{\prime}(t)=-b v(t)+G(u(t)), \quad t>0,
\end{equation}
where \( a \), \( b \), and \( e \) are positive constants. Here, \( u(t) \) and \( v(t) \) represent the average densities of the pathogen and the infected human population at time \( t \), respectively. The quantity \( 1/a \) denotes the average lifetime of the pathogen in the environment, and \( 1/b \) is the average infection period of infected individuals. The coefficient \( e \) reflects the multiplicative effect by which the infected human population increases the density of the pathogen. The function \( G(u) \) characterizes the infection rate of humans and is assumed to satisfy
\begin{itemize}
\item[\bf(G1):] $G \in C^{1}([0, \infty]), G(0)=0, G^{\prime}(z)>0$ for all $z \geq 0$;
\item[\bf(G2):] $\left(\frac{G(z)}{z}\right)^{\prime}<0$ for $z>0$ and $\lim _{z \rightarrow+\infty} \frac{G(z)}{z}<\frac{a b}{e}$.
\end{itemize}
A simple example of such a function is $G(z)=\frac{\alpha z}{1+z^\lambda}$ with $\alpha \in(0, a b / e), \ \lambda\in(0,1]$.
It was proved in \cite{fb13} that the basic reproduction number
\begin{equation}\label{C}
\mathcal{R}_0:=\frac{e G^{\prime}(0)}{a b}
\end{equation}
is a key determinant for the dynamics of \eqref{1} (with positive initial value $(u(0),v(0))$). More specifically, if $\mathcal{R}_{0}\leq 1$, then the epidemic eventually dies out and $(u(t), v(t)) \rightarrow (0,0)$ as $t\rightarrow \infty$. If $\mathcal{R}_{0}>1$, then the epidemic persists and the solution converges to the unique positive equilibrium $(u^{*}, v^{*})$,  determined by
\begin{equation}\label{q}
\frac{G\left(u^{*}\right)}{u^{*}}=\frac{a b}{e} \quad \text { and } \quad v^{*}=\frac{a}{e} u^{*}.
\end{equation}
Subsequent works \cite{fb14,fb15,fb16,fb17} extended model \eqref{1} to include spatial movement over a bounded spatial region and investigated threshold parameters that characterize persistence or extinction. The corresponding Cauchy problem (with the entire Euclidean space as the spatial domain of the system) has been studied in \cite{fb18,fb19}, where traveling waves of the system are used to estimate the spreading speed of the epidemic.

During the early phase of an epidemic, disease transmission typically occurs within a limited region. As individuals move, the infected area gradually expands. The boundary of this region is usually called the epidemic's propagation front. However, the  Cauchy problem models in \cite{fb18, fb19} are not able  to provide the precise location of this front. To overcome this limitation, a diffusion model with free boundaries, inspired by the work of Du and Lin \cite{fb21}, has been proposed and investigated in several recent works, which  has the following form:
\begin{equation}\label{2}
\begin{cases}
u_{t}=d_1 u_{x x}-a u+e v, & t>0, x \in(g(t), h(t)),\\
v_{t}=d_2 v_{x x}-b v+G(u), & t>0, x \in(g(t), h(t)), \\
u(t,x)=v(t,x)=0, & t>0, x \in\{g(t), h(t)\}, \\
h^{\prime}(t)=-\mu [u_{x}(t,h(t))+\rho v_{x}(t,h(t))], & t>0, \\
g^{\prime}(t)=-\mu [u_{x}(t,g(t))+\rho v_{x}(t,g(t))], & t>0, \\
h(0)=-g(0)=h_{0}, & \\ u(x, 0)=u_{0}(x), v(x, 0)=v_{0}(x), & x \in\left[-h_{0}, h_{0}\right],
\end{cases}
\end{equation}
where $h(t)$ and $g(t)$ are the moving boundaries of the infected region $[g(t), h(t)]$, and  $d_1>0,d_2\geq 0, \mu>0,\rho\geq 0$ are constants. The interval $\left[-h_{0}, h_{0}\right]$ represents the initial population range of the species. The equations for $h^{\prime}(t)$ and $g^{\prime}(t)$ mean that the expanding rate of the infected region is proportional to the spatial gradient of $u$ and $v$ at the front, which can be deduced from reasonable biological assumptions as in \cite{BDK}.

 Problem \eqref{2} was first considered by Ahn et al. \cite{fb22} for the special case $d_2=\rho=0$. They proved that the long-time dynamical behavior of \eqref{2} exhibits a spreading-vanishing dichotomy: either $(g(t), h(t))$ remains within a bounded subset of $\mathbb{R}$ for all $t > 0$, and $(u(t, x), v(t, x))$ converges uniformly to $(0, 0)$ as $t \to \infty$ (the vanishing case); or $(g(t), h(t))$ tends to $(-\infty,+\infty)$ as $t \to \infty$, and $(u(t, x), v(t, x))$ converges to a unique positive steady state (the spreading case).
Moreover, when spreading occurs in problem \eqref{2}, Zhao et al. \cite{fb23} showed that there exists a uniquely determined constant $c_0 > 0$ such that
$$
\lim _{t \rightarrow \infty} \frac{h(t)}{t}=\lim _{t \rightarrow \infty} \frac{-g(t)}{t}=c_{0}.
$$
The model \eqref{2} has been further extended to include nonlocal interaction mechanisms \cite{fb24} and time delay effects \cite{fb26} under the assumption $d_2 = \rho = 0$, as well as the general case with $d_2 > 0$ and $\rho \geq 0$ \cite{fb25}.

Recently, Cao et al. \cite{fb27} first studied a Fisher-KPP model with nonlocal diffusion and free boundaries, and since then, such problems have attracted considerable attention, leading to many interesting  works (see, for example, \cite{fb38, fb39, fb40}). In particular, a nonlocal version of model \eqref{2} has been considered, which takes the following form:
{\small\begin{equation}\label{pp}
\begin{cases}
u_{t}=d_{1}\displaystyle\int_{g(t)}^{h(t)}J_{1}(x-y)u(t,y)\mathrm{d}y-d_{1}u-au+ev, & t>0,x\in(g(t),h(t)),\\
v_{t}=d_{2}\displaystyle\int_{g(t)}^{h(t)}J_{2}(x-y)v(t,y)\mathrm{d}y-d_{2}v-bv+G(u), & t>0,x\in(g(t),h(t)),\\
u(t,x)=v(t,x)=0, & t>0,x\in\{g(t),h(t)\},\\
h^{\prime}(t)=\displaystyle\int_{g(t)}^{h(t)}\displaystyle\int_{h(t)}^{\infty}[\mu J_{1}(x-y)u(t,x)+\mu\rho J_{2}(x-y)v(t,x)]\mathrm{d}y\mathrm{d}x,
& t>0,\\
g^{\prime}(t)=-\displaystyle\int_{g(t)}^{h(t)}\displaystyle\int_{-\infty}^{g(t)}[\mu J_{1}(x-y)u(t,x)+\mu\rho J_{2}(x-y)v(t,x)]\mathrm{d}y\mathrm{d}x,
& t>0,\\
h(0)=-g(0)=h_{0},u(0,x)=u_{0}(x),v(0,x)=v_{0}(x),& |x|\leq h_{0},
\end{cases}
\end{equation}}
where $x=g(t)$ and $x=h(t)$ are the moving boundaries to be determined together with $u(t, x)$ and $v(t, x)$, which are always assumed to be identically $0$ for $x \in \mathbb{R} \backslash[g(t), h(t)]$.

The kernel function $J_i (x)~(i=1,2)$ is assumed to satisfy the following condition
\begin{itemize}
\item[\bf(J):]$~J_i\in C\left(\mathbb{R}\right)\cap L^\infty\left(\mathbb{R}\right),~ J_i(x)=J_i(-x)\geq 0,~ J_i(0)>0,~\displaystyle\int_{\mathbb{R}} J_i( x) \mathrm{d}x= 1,~i=1,2.$
\end{itemize}
Problem \eqref{pp} has the advantage over \eqref{2} in that it may capture the spread of epidemics with long-distance dispersal.  The free boundary conditions in \eqref{pp} can be biologically explained as in \cite{fb27}.

When $d_2 = \rho = 0$, Zhao et al. \cite{fb28} proved that the long-time behaviour of \eqref{pp}, similar to that of \eqref{2}, exhibits a spreading-vanishing dichotomy, and Du and Ni \cite{fb2} further determined the corresponding spreading speed in the spreading case. A striking difference between \eqref{pp} and \eqref{2} was identified in \cite{fb2}, showing that the spreading in \eqref{pp} may have infinite asymptotic speed (known as accelerated spreading) depending on whether the kernel functions satisfy the following condition:
\begin{itemize}
\item[\bf (J1):] $ \displaystyle\int_{0}^{\infty} x J_i(x) \mathrm{d} x<+\infty \mbox{ for $i=1$ or 2}.$
\end{itemize}
More precisely, they proved that
$$
\lim _{t \rightarrow \infty} \frac{h(t)}{t}=-\lim _{t \rightarrow \infty} \frac{g(t)}{t}=
\begin{cases}
c_{0} \in(0, \infty) & \text { if }\textbf{(J1)} \text { holds}, \\ \infty & \text { if }\textbf{(J1)} ~\text {does not hold}.
\end{cases}
$$
Subsequently, for the case $d_2 > 0$ and $\rho \geq 0$, Chang and Du \cite{fb4} studied the long-time dynamics of problem \eqref{pp} and determined the spreading speed of the moving fronts, along with the rate of accelerated spreading when $J_i(x) \sim |x|^{-\gamma}$ for $|x| \gg 1$ and $\gamma \in (1,2]$. When the term $ev$ is replaced by a more general function $H(v)$, Nguyen and Vo \cite{fb7} examined the impact of diffusion and established a sharp threshold for $d = d_1 = d_2$. Further developments have focused on the incorporation of nonlocal interaction terms, where the term $ev$ is replaced by  $e \int_{g(t)}^{h(t)} K(x - y) v(t, y)\mathrm{d}y$. In such a case, when $d_2 = \rho = 0$, the model was considered in \cite{fb29, fb30}, while the more general case $d_2 > 0$, $\rho \geq 0$ was studied in \cite{fb31, fb32, fb33}. Recently, Li et al \cite{fb34, fb35} further investigated a nonlocal modification of $G(u)$, replacing it with $G\left(\int_{g(t)}^{h(t)} K(x - y) u(t, y)\mathrm{d}y\right)$.

The entire space version of problem \eqref{pp}, which does not involve any free boundaries, takes the form
\begin{equation}\label{z}
\begin{cases}
u_t=d_1 \displaystyle\int_{\mathbb{R}} J_1(x-y) u(t,y) \mathrm{d}y-d_1 u-a u+e v, & t>0, x \in \mathbb{R}, \\
v_t=d_2 \displaystyle\int_{\mathbb{R}} J_2(x-y) v(t,y) \mathrm{d} y-d_2 v-b v+G(u), & t>0, x \in \mathbb{R}, \\ u(0,x)=u_0(x), v(0,x)=v_0(x), & x \in \mathbb{R},
\end{cases}
\end{equation}
which has been investigated by many authors, and a small sample can be found in  \cite{fb2,fb9,fb36} and the references therein.

To explain the new free boundary condition to be used in this paper, let us  note that if we define
\[
W_J(x):=\int_{x}^{+\infty} J(y) {d} y,
\]
 then the  free boundary condition in \eqref{pp} can be written equivalently as
 \begin{equation}\begin{cases}
\displaystyle h^{\prime}(t) =\mu \int_{g(t)}^{h(t)} \big[u(t, x)   W_{J_1}(h(t)-x)+\rho v(t,x)W_{J_2}(h(t)-x)\big] {d} x,\\
\displaystyle g'(t)=-\mu  \int_{g(t)}^{h(t)} \big[u(t, x)   W_{J_1}(h(t)-x)+\rho v(t,x)W_{J_2}(h(t)-x)\big] {d} x.
\end{cases}
\label{hw}
\end{equation}

Recently Feng et al. \cite{fb20} revisited the locally diffusive Fisher-KPP free boundary model of Du and Lin \cite{fb21}, where the Stefan type free boundary condition $h'(t)=-\mu u_x(t, h(t))$ in \cite{fb21} was replaced by
\begin{equation}\label{4}
h^{\prime}(t)=\mu \int_0^{h(t)} u(t, x) W(h(t)-x) \mathrm{d} x,
\end{equation}
and the weight function $W(x)$ is sign-changing and resembles those used in bird flight models (see, e.g., \cite{fb37}).
 This condition is based on the biological  assumption that the range boundary's movement is determined by the weighted total population within the range, independent of the species' diffusion strategy. More recently,
 Long et al. \cite{fb5} applied a similar free boundary condition to the nonlocal diffusion model of Cao et al. \cite{fb27}, with  $W(x)$ nonnegative and locally Lipschitz continuous in $[0,\infty)$, $W(0)>0$, but independent of the dispersal kernel governing the spatial movement of the species. Their work shows that the dynamics of the new model are mostly similar to those of the old model in \cite{fb27}, while some new propagation phenomena do occur.

Motivated by these works, in this paper, we consider the following variation of \eqref{pp}:
{\small \begin{equation}\label{A}
\begin{cases}
u_{t}=d_{1} \displaystyle\int_{g(t)}^{h(t)} J_{1}(x-y) u(t,y) \mathrm{d} y-d_{1} u-a u+e v, & t>0, x \in(g(t), h(t)), \\
v_{t}=d_{2} \displaystyle\int_{g(t)}^{h(t)} J_{2}(x-y) v(t,y) \mathrm{d} y-d_{2} v-b v+G(u), & t>0, x \in(g(t), h(t)), \\
u(t,x)=v(t,x)=0, & t>0, x \in\{g(t), h(t)\}, \\
h^{\prime}(t)=\mu\displaystyle\int_{g(t)}^{h(t)} \big[ u(t,x)W_{J_1}(h(t)-x)+ \rho v(t,x)W(h(t)-x)\big] \mathrm{d} x, & t>0, \\
g^{\prime}(t)=-\mu\displaystyle\int_{g(t)}^{h(t)}\big[ u(t,x)W_{J_1}(h(t)-x)+ \rho v(t,x)W(h(t)-x)\big] \mathrm{d} x, & t>0, \\
h(0)=-g(0)=h_{0}, u(0,x)=u_{0}(x), v(0,x)=v_{0}(x), & |x| \leq h_{0},
\end{cases}
\end{equation}}
where the parameters $d_1,d_2,a,b,e,\mu,\rho,h_0$ are given positive constants, and $W$ satisfies
\begin{itemize}
\item[\bf(W):] $W\in L^\infty([0,\infty))$ is nonnegative and locally Lipschitz  in $[0,\infty)$ with $W(0)>0$.
\end{itemize}

Throughout this paper, we assume that the kernel function $J_i (x)~(i=1,2)$ satisfies {\bf(J)} and $W$ satisfies {\bf(W)}.
The initial functions $u_0,v_0$ satisfy
\begin{equation}\label{B}
u_0,v_0\in C([-h_0,h_0]),~ u(\pm h_0)=v(\pm h_0)=0,~ u_0(x),v_0(x)>0~\text{in}~(-h_0,h_0),
\end{equation}
and the function $G(u)$ satisfies {\bf(G1)} and {\bf(G2)}. Moreover, we always assume
\begin{equation}\label{H}
u(t,x)=0,~v(t,x)=0~\text{for}~t\geq0,~x\notin (g(t),h(t)).
\end{equation}

The new free boundary conditions in \eqref{A} seem reasonable from a biological point of view,  where the growth rate of $h(t)$ is proportional to a linear combination of
\[
\displaystyle \int_{g(t)}^{h(t)}  u(t,x)W_{J_1}(h(t)-x)\rd x \mbox{ and } \int_{g(t)}^{h(t)} v(t,x)W(h(t)-x) \mathrm{d} x,
\]
with the former representing the total population of pathogens (e.g., cholera bacteria originating from human waste) that move out of the infected region $[g(t), h(t)]$ across the right boundary $h(t)$ via environmental media such as water, but unlike the situation of the pathogens,  it is plausible that the contribution to the advance of $h(t)$ from the infected humans is determined by the total population of infected individuals in the region, as in \cite{fb5}, expressed by the second term above,
with a weight function $W(x)$ independent of the dispersal kernel $J_2(x)$. So in \eqref{A} the expansion rate of $h(t)$ is proportional to a linear combination of the outward flux of the pathogen \( u \) through the range boundary and the weighted total population of infected individuals \( v \) in the infected region.

Our main results on \eqref{A} are the following theorems.

\begin{theorem}\label{T11}
$($Global existence and uniqueness$)$. Suppose that {\bf{(J)}} and {\bf(W)} hold, $G(u)$ satisfies {\bf(G1)-(G2)}, and $\left(u_0(x), v_0(x)\right)$ satisfies \eqref{B}. Then problem \eqref{A} admits a unique  solution $(u, v, g, h)$ defined for all $t > 0$.
\end{theorem}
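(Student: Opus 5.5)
The plan follows the contraction-mapping framework of Cao et al. \cite{fb27}, as adapted to systems in \cite{fb4} and to weighted free boundary conditions in \cite{fb5}. The first step is local existence and uniqueness on a short interval $[0,T]$.

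Fix $T>0$ small and $s^*>0$. Define the class $\mathcal{H}_T$ of pairs $(g,h)\in C([0,T])^2$ with $g(0)=-h(0)=-h_0$, with $h$ nondecreasing and $g$ nonincreasing, and with Lipschitz quotients bounded by $s^*$. For such a pair, consider the system of the first two equations of \eqref{A} on the moving domain $\Omega_{g,h}$, with initial data $(u_0,v_0)$ and extension by zero \eqref{H}. Existence and uniqueness of a solution $(u,v)=(U_{g,h},V_{g,h})$ come from a fixed-point argument in $x$-continuous functions. For fixed $x$, the equations are ODEs in $t$ starting at the time $t_x$ when $x$ enters the domain. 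The nonlinearity $G$ is Lipschitz on bounded sets by \textbf{(G1)}.

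A priori bounds come from the comparison principle for the nonlocal system. The system is cooperative because $G'>0$. By \textbf{(G2)} there is $M$ with $G(z)\le \frac{ab}{e}\,z$ for large $z$, so a large multiple of a suitable constant vector $(M_1,M_2)$ with $aM_1\ge eM_2$ and $bM_2\ge G(M_1)$ is a supersolution. This gives
\[
0<u\le M_1,\qquad 0<v\le M_2 ,
\]
with $M_1,M_2$ independent of $T$ and of $(g,h)$. Positivity inside $\Omega_{g,h}$ follows since $u_0,v_0>0$.

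Next define the map $\Gamma(g,h)=(\tilde g,\tilde h)$ by
\[
\tilde h(t)=h_0+\mu\int_0^t\!\int_{g(\tau)}^{h(\tau)}\big[U\,W_{J_1}(h(\tau)-x)+\rho V\,W(h(\tau)-x)\big]\,\rd x\,\rd\tau ,
\]
and similarly for $\tilde g$. The bounds above and $W_{J_1}\le 1$, $W\in L^\infty$ give $0\le \tilde h'\le \mu\,(h_0+s^*T)\,(M_1+\rho M_2\|W\|_\infty)\cdot 2$. Choosing $s^*$ larger than this quantity (with $T\le1$) ensures that $\Gamma$ maps $\mathcal{H}_T$ into itself.

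The main obstacle is the contraction estimate. I need to bound $\|U_1-U_2\|+\|V_1-V_2\|$ by $C\|(g_1,h_1)-(g_2,h_2)\|_{C([0,T])}$, even though the domains differ. I plan to follow \cite{fb27}. For $x$ in the common domain, compare the integral ODEs directly and apply Gronwall. For $x$ in only one domain, use the fact that the solution there is $O(s^*T)$-small in time, so it is controlled by the difference of the boundaries. The flux term then splits into three contributions:
\begin{itemize}
\item the difference of $U,V$;
\item the difference of the integration domains, bounded by $(M_1+\rho M_2\|W\|_\infty)|h_1-h_2|$;
\item the difference of the weights, bounded by $\mathrm{Lip}(W)|h_1-h_2|$ for $W$ (locally Lipschitz by \textbf{(W)} on the bounded range $[0,2h_0+2s^*]$) and by $\|J_1\|_\infty|h_1-h_2|$ for $W_{J_1}$.
\end{itemize}
Integrating in time produces a factor $CT$. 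For $T$ small, $\Gamma$ is therefore a contraction on the closed set $\mathcal{H}_T$. Uniqueness holds in this class, and any solution lies in it for suitable $s^*$, which gives uniqueness.

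Finally, to extend globally, take the maximal existence time $T_{\max}$ and suppose $T_{\max}<\infty$. The bounds $u\le M_1$, $v\le M_2$ are uniform in time. Hence $h'$ and $-g'$ are bounded by $C(h-g)$, so Gronwall gives that $h-g$ stays bounded on $[0,T_{\max})$ and the speeds are bounded. The local step then restarts from time $T_{\max}-\epsilon$ with a step length that depends only on these uniform bounds, not on $\epsilon$. Restarting requires treating the initial datum as a function on $[g,h]$ that is positive inside; the local argument goes through verbatim for such data. This extends the solution past $T_{\max}$, a contradiction, so $T_{\max}=\infty$.
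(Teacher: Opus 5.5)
The overall structure of your proposal is fine: you solve on a prescribed moving domain, iterate the boundary map, contract for small time, and extend. The a priori bounds and the self-map under an upper speed bound are also fine. The gap is in the contraction step: it fails in the class $\mathcal H_T$ you chose. For $x\notin[-h_0,h_0]$, let $t^i_x$ be the time at which $x$ enters the $i$-th domain, so $h_i(t^i_x)=x$ (or $g_i(t^i_x)=x$). Right after entry, $U_i(\cdot,x)$ grows at a rate bounded below. This is because $\partial_t U_i\ge d_1\int J_1(x-y)U_i(t,y)\,\rd y-(d_1+a)U_i$, with $J_1(0)>0$ and $U_i\ge e^{-(d_1+a)t}u_0$. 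Hence $|U_1-U_2|$ is of order $|t^1_x-t^2_x|$ in two places. It appears at points lying in only one domain. It also appears as the initial discrepancy in your Gronwall argument on the common domain, since the two integral ODEs at $x$ start at different times.

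When the $h_i$ are merely nondecreasing with an upper Lipschitz bound, $|t^1_x-t^2_x|$ is not controlled by $\|h_1-h_2\|_{C([0,T])}$. Take $h_2=h_0+\epsilon t$; take $h_1=h_0$ on $[0,T/2]$ and $h_1=h_0+2\epsilon(t-T/2)$ on $[T/2,T]$; and set $g_i=-h_i$. Then $\|h_1-h_2\|\le\epsilon T/2$. Yet at $(t,x)=(T/2,\,h_0+\epsilon T/4)$ one has $U_1=0$ and $U_2\ge cT$, with $c>0$ independent of $\epsilon$ for small $T$. So the sup-norm bound $\|U_1-U_2\|\le C\|(g_1,h_1)-(g_2,h_2)\|$ you take from \cite{fb27} is false on $\mathcal H_T$. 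Your remark that the solution is ``$O(s^*T)$-small'' gives smallness in $T$, not proportionality to the boundary difference.

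The missing idea is the one built into the paper's class $\Sigma_{\varepsilon_0,s,\alpha_0,\beta_0}$: a positive lower bound on the boundary speeds. This means $h(t_2)-h(t_1)\ge\mu\beta_0(t_2-t_1)$ and $g(t_2)-g(t_1)\le-\mu\alpha_0(t_2-t_1)$, which gives $|t^1_x-t^2_x|\le\|h_1-h_2\|_{C([0,s])}/(\mu\beta_0)$. The class also imposes $h-g\le 2h_0+\varepsilon_0/4$ in place of your Lipschitz bound. Proving that $\mathcal F$ preserves this lower bound (Lemma \ref{a3}) is a genuine step absent from your proposal. It needs a positive lower bound on $\tilde h'$, which the paper gets from two sources: $J_1\ge\delta_0$ on $[-\varepsilon_0,\varepsilon_0]$ combined with $U_{g,h}\ge e^{-(d_1+a)t}u_0$ near $x=h_0$; and $\int_{5\varepsilon_0/4}^{2h_0-\varepsilon_0}W\,\rd x>0$ combined with $V_{g,h}\ge e^{-(d_2+b)t}v_0$. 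Only then does the argument of \cite{fb27} give the contraction of Lemma \ref{a9}. With this class the local step length also depends on lower bounds of the data near the free boundary, so your restart argument must account for that. One way is to extend the solution continuously to $T_{\max}$ and restart there.

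Alternatively, you could keep $\mathcal H_T$ and run the difference estimate in $L^1(\mathbb R)$ for the zero extensions. The nonlocal terms then become exactly $\int_{\mathbb R}J_1(x-y)(U_1-U_2)(t,y)\,\rd y$. The domain mismatch enters only through the measure of the symmetric difference of the domains, which is at most $|h_1-h_2|+|g_1-g_2|$. The boundary map only needs $\int|U_1-U_2|\,\rd x$ and $\int|V_1-V_2|\,\rd x$, since $0\le W_{J_1}\le 1$ and $W$ is bounded and Lipschitz on bounded sets. Gronwall in $L^1$ then closes without any lower speed bound. That is a different argument from the pointwise one you describe, though.
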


\begin{theorem}\label{T12}
$($Spreading-vanishing dichotomy$)$. Under the conditions of Theorem \ref{T11}, suppose that $(u, v, g, h)$ is the solution of \eqref{A} and $\mathcal R_0\not=1$. Then one of the following cases must occur:
\begin{itemize}
\item[(i)] {\bf Vanishing:}
 $$\begin{cases}
 \mbox{$\lim \limits_{t \rightarrow
    \infty}(g(t),h(t))=(g_\infty,h_\infty)$ is a finite interval,}\\
\lim _{t \rightarrow \infty}(u(t,x), v(t,x))=(0,0) \text { uniformly for } x \in[g(t), h(t)].
\end{cases}$$

\item[(ii)] {\bf Spreading:}
$$\begin{cases}
\lim \limits_{t \rightarrow \infty} h(t)=-\lim \limits_{t \rightarrow \infty} g(t)=\infty,\\
\lim _{t \rightarrow \infty}(u(t,x), v(t,x))=\left(u^{*}, v^{*}\right) \text { locally uniformly for } x \in \mathbb{R}.
\end{cases}
$$
\end{itemize}
\end{theorem}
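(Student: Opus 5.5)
Since $h'(t)\ge 0$ and $g'(t)\le 0$, the limits $h_\infty=\lim_{t\to\infty}h(t)\in(h_0,\infty]$ and $g_\infty=\lim_{t\to\infty}g(t)\in[-\infty,-h_0)$ exist. Moreover $h'(t)=-g'(t)$, so $h(t)+g(t)\equiv 0$. Hence $h_\infty-g_\infty$ is either finite or $+\infty$ on both sides simultaneously, and the two alternatives reduce to $h_\infty<\infty$ versus $h_\infty=\infty$. The plan is to show that $h_\infty<\infty$ forces vanishing and $h_\infty=\infty$ forces spreading.

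\emph{Vanishing case $h_\infty<\infty$.} If $\mathcal R_0<1$, we compare with the ODE system \eqref{1}: the constant-in-space ODE solution with large data is an upper solution, and it decays to $(0,0)$. In general, and in particular when $\mathcal R_0>1$, we argue as in \cite{fb4,fb28}.
\begin{itemize}
\item[(a)] Using (J), the boundedness of $u,v$ (from the comparison in Theorem \ref{T11}) and the equations, $u_t$ and $v_t$ are bounded. Hence $u,v$ are uniformly Lipschitz in $t$.
\item[(b)] Since $h'(t)\ge \mu\int_{g}^{h}u\,W_{J_1}(h-x)\,dx$ and $\int_0^\infty h'(t)\,dt<\infty$, we have $h'\in L^1$. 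Together with uniform continuity of $h'$ (from (a)), this gives $h'(t)\to0$.
\item[(c)] Because $W_{J_1}(h-x)\ge W_{J_1}(h_\infty-g_\infty)>0$ on the region (using $J_1(0)>0$), step (b) gives $\int_g^h u(t,x)\,dx\to0$. Likewise $W>0$ near $0$ by (W), giving decay of $v$ near the boundary. This alone is not enough for uniform decay.
\item[(d)] So we use the principal eigenvalue $\lambda_1(g_\infty,h_\infty)$ of the linearized cooperative nonlocal system on $[g_\infty,h_\infty]$. I expect an earlier proposition on this eigenvalue problem to give monotonicity in the interval length and the limits $\lambda_1\to$ the ODE eigenvalue (sign of $\mathcal R_0-1$) as the length $\to\infty$. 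If $\lambda_1(g_\infty,h_\infty)>0$, then $u,v$ would stay bounded below by a positive multiple of the eigenfunction on $[g(T),h(T)]$ for large $T$. This is obtained by comparing with the problem on a fixed interval $[g(T),h(T)]$, whose solution converges to a positive steady state. It would force $h'\ge c>0$, contradicting $h_\infty<\infty$.
\item[(e)] Hence $\lambda_1(g_\infty,h_\infty)\le0$. The fixed-domain comparison problem on $[g_\infty,h_\infty]$ is then an upper solution, and it tends to $(0,0)$ uniformly, which proves (i).
\end{itemize}

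\emph{Spreading case $h_\infty=\infty$.} By step (d) of the vanishing case, when $\mathcal R_0<1$ we get $\lambda_1\le0$ for every interval, and the argument there forces $h_\infty<\infty$. So spreading implies $\mathcal R_0>1$.

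For the upper bound, comparison with the ODE \eqref{1}, whose solution tends to $(u^*,v^*)$ by the result of \cite{fb13}, gives $\limsup(u,v)\le(u^*,v^*)$ uniformly.

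For the lower bound, fix $L$ large so that $\lambda_1(-L,L)>0$. Pick $T$ with $[-L,L]\subset(g(T),h(T))$ and compare, for $t\ge T$, with the fixed-boundary problem on $[-L,L]$. Its solution converges to the unique positive steady state $(u_L,v_L)$, and $(u_L,v_L)\to(u^*,v^*)$ locally uniformly as $L\to\infty$. The uniqueness and convergence come from (G2), which makes the nonlinearity sublinear in the sense required by the standard monotone-system arguments. This yields $\liminf(u,v)\ge(u^*,v^*)$ locally uniformly.

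\emph{Main obstacle.} I expect the main obstacle to be the eigenvalue facts for the cooperative nonlocal system and the convergence $(u_L,v_L)\to(u^*,v^*)$, since $J_1\neq J_2$ and there is no diffusion compactness. I would use a Krein–Rutman / generalized principal eigenvalue approach, treating the $W$-term only through positivity. The new boundary condition enters only through $h'\to0$ implying smallness of the weighted integrals, and that part is straightforward.
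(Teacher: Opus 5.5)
Your plan has two genuine gaps, and both concern the fronts rather than the densities. First, ruling out spreading when $\mathcal R_0<1$. Your sentence ``when $\mathcal R_0<1$ we get $\lambda_1\le0$ for every interval, and the argument there forces $h_\infty<\infty$'' inverts step (d). That step shows that a growth-sign eigenvalue on $[g(T),h(T)]$ forces $h'\ge c>0$, i.e.\ that vanishing implies $\lambda_1(g_\infty,h_\infty)\le 0$. It gives no upper bound on $h$ when the eigenvalue has the decay sign. The bare convergence $(u,v)\to(0,0)$ from the ODE comparison gives no such bound either. The case $\mathcal R_0=1$ should have warned you: there the eigenvalue also has the decay sign on every finite interval (Proposition~\ref{c3}(ii)). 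Yet boundedness of $h_\infty-g_\infty$ is not known in general (Theorem~\ref{thm6} needs extra hypotheses), and this is exactly why $\mathcal R_0\neq1$ is assumed. What you need is a decay rate that is integrable in time. For $\mathcal R_0<1$ the spatially constant ODE upper solution decays exponentially, so $u,v\le c_1e^{-\delta t}$. Since $W_{J_1}\le 1$ and $W\in L^\infty$, this gives $(h-g)'\le Ce^{-\delta t}(h-g)$, hence $h-g\le 2h_0e^{C/\delta}$ by Gronwall (Lemma~\ref{666}). Only then do your steps (d)--(e) yield vanishing.

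Second, your reduction $h+g\equiv 0$ uses the $g'$-line of \eqref{A} as printed, and that line is a misprint. In the definition of $\tilde g$ in the fixed-point map $\mathcal F$ and in Lemma~\ref{mm}, the left front is driven by $W_{J_1}(x-g(t))$ and $W(x-g(t))$, so $h'\neq -g'$ for nonsymmetric data. You must therefore exclude the mixed case $h_\infty=\infty>-g_\infty$ and its mirror image, which fit neither alternative of the theorem. Your spreading lower bound also needs this, since it requires $[-L,L]\subset(g(T),h(T))$. For $\mathcal R_0>1$ your step (d) mechanism handles it. If $h_\infty=\infty$, the principal eigenvalue on $[g(t),h(t)]$ has the growth sign for large $t$. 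So $(u,v)$ is eventually bounded below by a positive steady state on $[g(T),h(T)]$. This forces $g'(t)\le-\delta<0$, contradicting $g_\infty>-\infty$ (Lemma~\ref{d7}). For $\mathcal R_0<1$ the Gronwall bound above covers it. With these two additions your plan coincides with the paper's: (d)--(e) is Lemma~\ref{d3}, and the ODE upper bound plus the fixed-domain lower bound is Lemma~\ref{d8} via Proposition~\ref{990}(iii). Steps (a)--(c) are unnecessary, and the claim $W_{J_1}(h_\infty-g_\infty)>0$ in (c) fails for compactly supported $J_1$.
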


\begin{theorem}\label{T13}
$($Spreading-vanishing criteria$)$. Under the conditions of Theorem \ref{T11}. Let $(u, v, g, h)$ be the solution of \eqref{A}. Then the following conclusions hold:
\begin{itemize}
\item[(i)] If $\mathcal{R}_0 < 1$, then  vanishing always happens.
\item[(ii)] If $\mathcal{R}_0 \geq\left(1+\frac{d_1}{a}\right)\left(1+\frac{d_2}{b}\right)$, then spreading always happens.

\item[(iii)] If $1<\mathcal{R}_0<\left(1+\frac{d_1}{a}\right)\left(1+\frac{d_2}{b}\right)$, then there exists a unique ${L}^*>0$ such that
  \begin{itemize}
    \item[(a)] if $h_0 \geq {L}^*$, then spreading always happens;
    \item[(b)] if $0<h_0<{L}^*$, then there exists $\mu^*>0$ depending on $(u_0, v_0)$ such that  vanishing happens when $0<\mu \leq \mu^*$ and spreading happens when $\mu>\mu^*$;
     \item[(c)] if $0<h_0<{L}^*$, $(u_0,v_0)=\sigma(\psi_1,\psi_2)$ with $\sigma>0$ and $(\psi_1,\psi_2)$ satisfying \eqref{B}, and either $J_1(x)>0$ in $\mathbb{R}$ or $W(x)>0$ in $[0,2L^{*}]$, then there exists a unique $\sigma^*> 0$ such that vanishing occurs when $0<\sigma\leq\sigma^*$ and spreading occurs when $\sigma > \sigma ^*$.

  \end{itemize}
\end{itemize}
\end{theorem}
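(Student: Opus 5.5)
The plan rests on the principal eigenvalue $\lambda_1(L)$ of the linearized cooperative nonlocal operator on $[-L,L]$,
\[
\mathcal{L}_L[\phi,\psi]=\Big(d_1\!\int_{-L}^{L}\!J_1(x-y)\phi(y)\rd y-(d_1+a)\phi+e\psi,\;\; d_2\!\int_{-L}^{L}\!J_2(x-y)\psi(y)\rd y-(d_2+b)\psi+G'(0)\phi\Big),
\]
together with the dichotomy of Theorem \ref{T12}. The criteria will follow from the sign of $\lambda_1(h_\infty)$ in the vanishing case and from monotone dependence of the solution on $\mu$ and on $\sigma$.

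\textbf{Eigenvalue facts.} First I would record the properties of $\lambda_1(L)$, proved via Krein--Rutman/variational arguments for cooperative nonlocal systems as in \cite{fb4,fb7}. It is continuous and strictly increasing in $L$. As $L\to0^+$,
\[
\lambda_1(L)\to\lambda_1(0)=\tfrac12\Big[-(d_1+a)-(d_2+b)+\sqrt{(d_1+a-d_2-b)^2+4eG'(0)}\Big],
\]
the largest eigenvalue of the $2\times2$ matrix with diagonal $-(d_1+a)$, $-(d_2+b)$ and off-diagonal $e$, $G'(0)$. As $L\to\infty$, $\lambda_1(L)\to\lambda_1(\infty)$, the largest eigenvalue of the matrix with diagonal $-a$, $-b$. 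Hence $\lambda_1(\infty)>0$ if and only if $\mathcal R_0>1$, and $\lambda_1(0)\ge0$ if and only if $eG'(0)\ge(d_1+a)(d_2+b)$, that is, $\mathcal R_0\ge(1+d_1/a)(1+d_2/b)$. From this I define $L^*$ as the unique root of $\lambda_1(L^*)=0$ when $1<\mathcal R_0<(1+d_1/a)(1+d_2/b)$. The key lemma is: if vanishing occurs, then $\lambda_1(h_\infty)\le 0$, where the interval length $h_\infty-g_\infty$ replaces $2L$. To prove it, suppose $\lambda_1>0$ on $(g_\infty,h_\infty)$. Shrink slightly to $[g(T),h(T)]$ for large $T$. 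A small multiple of the eigenfunction is then a subsolution, which gives a positive lower bound for $(u,v)$ and contradicts $u,v\to0$.

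\textbf{Parts (i), (ii) and (iii)(a).} For (i), I compare with the ODE system. The pair $(u,v)$ is a subsolution of \eqref{1} with initial data $(\max u_0,\max v_0)$, so $u,v\le Ce^{-\delta t}$ by a linear comparison for $\mathcal R_0<1$. The free boundary integrals then make $h'(t)\le \mu C'(h(t)-g(t))e^{-\delta t}$ (using $W_{J_1}\le1$ and $W\in L^\infty$). Gronwall gives $h_\infty-g_\infty<\infty$, so we are in the vanishing alternative. For (ii) and (iii)(a): if vanishing occurred, then $h_\infty-g_\infty>2h_0\ge 2L^*$, so $\lambda_1>0$ by strict monotonicity, which contradicts the key lemma. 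When $\lambda_1(0)\ge0$, any positive length gives $\lambda_1>0$.

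\textbf{Part (iii)(b).} The solution is monotone in $\mu$ by the comparison principle for \eqref{A}; this comparison is proved as in the well-posedness part, since the right-hand side of $h'$ is increasing in $u,v$ and in $h$. For small $\mu$, I build an upper solution on a fixed interval of half-length $L_1\in(h_0,L^*)$ with $\lambda_1(L_1)<0$. It has the form $\bar h=h_0+ L'(1-e^{-\delta t})$ with $(\bar u,\bar v)=Me^{-\delta t}(\phi,\psi)$, and it works when $\mu$ is small. For large $\mu$, I show spreading by forcing $h(1)-g(1)>2L^*$. This uses $h'\ge \mu\int u\,W_{J_1}(h-x)\rd x$ together with the lower bound $u\ge c>0$ near the initial support for $t\in[0,1]$, independent of $\mu$. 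Then $\mu^*=\sup\{\mu:\text{vanishing}\}$, and vanishing at $\mu^*$ itself follows by continuous dependence: if spreading held at $\mu^*$, then $h(T)-g(T)>2L^*$ at some finite $T$, an open condition in $\mu$.

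\textbf{Part (iii)(c), the main obstacle.} The argument is the same in $\sigma$, with the same small-data upper solution for small $\sigma$. Large $\sigma$ is the delicate step: it needs $h'$ to become large, which requires strict positivity of the boundary weight over the region, $J_1>0$ or $W>0$ on $[0,2L^*]$. With that, $h'(t)\ge \mu c\,\sigma\int\psi$ pushes the length past $2L^*$ in short time. Uniqueness of $\sigma^*$ needs strict monotonicity, and vanishing at $\sigma^*$ follows from openness of the spreading set, exactly as in (iii)(b).
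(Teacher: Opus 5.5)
\textbf{Gap in (iii)(b), large $\mu$.} Most of your outline matches the paper: the threshold $L^*$, the lemma that vanishing forces a nonpositive growth eigenvalue on $[g_\infty,h_\infty]$, ODE comparison plus Gronwall for (i), the small-$\mu$ upper solution, and openness of the spreading set at the thresholds. Your large-$\sigma$ step in (iii)(c) is also correct, and it is more direct than the paper's mass-identity argument. While $h-g\le 2L^*$ you have $W_{J_1}(h(t)-x)\ge W_{J_1}(2L^*)>0$ (resp.\ $W(h(t)-x)\ge\min_{[0,2L^*]}W>0$) and $\int u\ge \sigma e^{-(d_1+a)t}\int\psi_1$ (resp.\ the analogue for $v$), so $h(1)-h_0\gtrsim\sigma$.

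The problem is the large-$\mu$ step. You force $h(1)-g(1)>2L^*$ using only $h'\ge\mu\int u\,W_{J_1}(h-x)\,dx$ and a $\mu$-independent lower bound for $u$ near $[-h_0,h_0]$. In (b) neither $J_1>0$ nor $W>0$ on $[0,2L^*]$ is assumed. Suppose $J_1$ vanishes outside $[-r,r]$ and $W$ vanishes outside $[0,r]$, with $r$ small compared with $2L^*-2h_0$. Once $h(t)\ge h_0+r$, both $W_{J_1}(h(t)-x)$ and $W(h(t)-x)$ vanish for every $x\in[-h_0,h_0]$. Your bound then gives nothing and only shows that $h$ advances about $r$ beyond $h_0$. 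Near the moving front, $u$ starts from $0$ at the invasion time and grows at a rate at most $d_1A$, independent of $\mu$. So a lower bound on the initial support does not transfer to the front. This is the same mechanism you correctly flag in (c) as needing positivity of the weights over $[0,2L^*]$; here you use it without that hypothesis.

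\textbf{What is missing and how the paper supplies it.} You need a $\mu$-independent lower bound for $u_\mu$ near $h_\mu(t)$. The paper (Lemma e6) obtains it by contradiction, using monotonicity in $\mu$. If vanishing held for all $\mu$, then $h_{\mu,\infty}-g_{\mu,\infty}\le 2L^*$, so $H_\infty:=\lim_{\mu\to\infty}h_{\mu,\infty}<\infty$. Choose $\mu_0,t_0$ with $H_\infty-h_{\mu_0}(t_0)<\epsilon_0/4$, where $J_1\ge\delta_0$ on $[-\epsilon_0,\epsilon_0]$. Then for $\mu\ge\mu_0$ and $\tau\ge t_0$ you have $h_{\mu_0}(\tau)\le h_\mu(\tau)<h_{\mu_0}(\tau)+\epsilon_0/4$ and $u_\mu\ge u_{\mu_0}$, hence
\[
h_\mu'(\tau)\ \ge\ \mu\,\frac{\epsilon_0\delta_0}{4}\int_{h_{\mu_0}(\tau)-\epsilon_0/2}^{h_{\mu_0}(\tau)}u_{\mu_0}(\tau,x)\,dx .
\]
Integrating over $[t_0,t_0+1]$ gives $h_{\mu,\infty}\ge c\mu$ with $c>0$ independent of $\mu$, a contradiction. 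The monotonicity and the uniform bound pin the front of $u_\mu$ near that of the fixed solution $u_{\mu_0}$, where positive mass is available. Alternatively, you could build lower bounds layer by layer of width $\epsilon_0/2$ in fixed time increments, but that induction must actually be carried out.

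\textbf{Minor point.} Comparison in $\mu$ does not follow from the right-hand side of $h'$ being increasing in $h$. It is not increasing in general, since $W_{J_1}(h-x)$ decreases in $h$ and $W$ is arbitrary. It follows instead from the first-touching argument of Lemma \ref{mm}, where the two fronts coincide.
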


The constant ${L}^*$ depends only on $\left(a, b, e, G^{\prime}(0), d_1, d_2, J_1, J_2\right)$, which is determined by an eigenvalue problem.

Our next result is about the sharp criteria for the spreading-vanishing phenomena in terms of the   diffusion rate $(d_1, d_2)=(sd_1^0, sd_2^0)$. More precisely, when $\mathcal{R}_0 > 1$, by a related principal eigenvalue problem, there exists a threshold value $s=d^*$ such that spreading always occurs if $s \in\left(0, d^*\right]$ while both spreading and vanishing  may happen if $s\in (d^*,+\infty)$.

\begin{theorem}\label{T14}
Under the conditions of Theorem \ref{T11}, suppose that $(d_1, d_2)=(sd_1^0, sd_2^0)$, $\mathcal{R}_0 > 1$ and $(u, v, g, h)$ is the solution of \eqref{A}. Then there exists a unique $d^*>0$ such that the following conclusions hold:
\begin{itemize}
\item[(i)] If $0 < s \leq d^*$, then spreading always happens.
\item[(ii)] If $s > d^*$, then there exists $\mu_0^* > 0$ depending on $(u_0,v_0)$ such that  vanishing occurs when $0 < \mu \leq \mu_0^*$ and spreading occurs when $\mu > \mu_0^*$.

\end{itemize}
\end{theorem}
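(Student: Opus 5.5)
The plan is to reduce Theorem \ref{T14} to the structure behind Theorem \ref{T13}, through the principal eigenvalue of the linearized cooperative system on a fixed interval. For $L>0$ and $s>0$, let $\lambda_p(L,s)$ be the principal eigenvalue of
\[
\begin{cases}
sd_1^0\displaystyle\int_{-L}^{L}J_1(x-y)\phi_1(y)\,\rd y-(sd_1^0+a)\phi_1+e\phi_2=\lambda\phi_1,\\
sd_2^0\displaystyle\int_{-L}^{L}J_2(x-y)\phi_2(y)\,\rd y-(sd_2^0+b)\phi_2+G'(0)\phi_1=\lambda\phi_2,
\end{cases}
\]
where $x\in[-L,L]$ and the eigenfunction is positive. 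I take from the preceding analysis (the part defining $L^*$) the following facts. The eigenvalue $\lambda_p(L,s)$ exists. It is continuous and strictly increasing in $L$. As $L\to0$ it tends to the largest eigenvalue of the matrix $\begin{pmatrix}-sd_1^0-a & e\\ G'(0) & -sd_2^0-b\end{pmatrix}$. As $L\to\infty$ it tends to the largest eigenvalue of $\begin{pmatrix}-a&e\\G'(0)&-b\end{pmatrix}$, which is positive because $\mathcal R_0>1$.

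The vanishing/spreading criterion from Theorem \ref{T13} is then: spreading always happens if $\lambda_p(h_0,s)\ge0$. If $\lambda_p(h_0,s)<0$, there is $\mu^*$ that separates vanishing ($\mu\le\mu^*$) from spreading ($\mu>\mu^*$). This holds because vanishing forces $\lambda_p(h_\infty-g_\infty)/2\le0$, together with monotonicity in $L$ and the fact that $h(t)-g(t)$ is strictly increasing.

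\textbf{Step 1 (monotonicity in $s$).} Fix $L=h_0$ and show that $s\mapsto\lambda_p(h_0,s)$ is continuous and strictly decreasing. I would use the variational or Collatz–Wielandt characterization. Write the operator as $sA+B$, where $A\phi_i=d_i^0\bigl(\int J_i\phi_i-\phi_i\bigr)$ and $B$ is the constant reaction matrix. The operator $A$ is nonpositive on $[-L,L]$: by symmetry of $J_i$ and $\int J_i\le1$, its associated quadratic form is $\le0$. For the non-symmetric cooperative system, I would symmetrize with diagonal weights: multiply the second equation by $e/G'(0)$, or pass to the adjoint. Then compare using the positive eigenfunction of one parameter as a test function for the other. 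The goal is to show that $\lambda_p(h_0,s)$ is a sum of a linear decreasing part in $s$ and a bounded part, so it is strictly decreasing. The strictness comes from $\int_{-L}^{L}J_i(x-y)\,\rd y<1$ near the endpoints.

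\textbf{Step 2 (limits).} As $s\to0^+$, $\lambda_p(h_0,s)$ tends to the top eigenvalue of $\begin{pmatrix}-a&e\\G'(0)&-b\end{pmatrix}$, which is $>0$. As $s\to\infty$, $\lambda_p(h_0,s)\to-\infty$. For the second limit, use $\lambda_p(h_0,s)\le\lambda_p(\infty\text{-free bound})$ with the estimate $\int_{-L}^{L}J_i\phi\le\|J_i\|_\infty 2L\|\phi\|_\infty$. For large $s$, the term $-sd_i^0(1-2L\|J_i\|_\infty)$ dominates on a small interval. To handle general $L$, use a Collatz–Wielandt supersolution $\phi\equiv$ const. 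This gives $\lambda_p\le \max_x[\text{reaction}]+s\,d_i^0\bigl(\sup_x\int_{-L}^{L}J_i(x-y)\,\rd y-1\bigr)$, and the bracket is negative because $J_i$ has total mass $1$ and, being continuous with $J_i(0)>0$, cannot be supported on an interval of length $2L$ for every $x$. I expect this limit step to be the main obstacle. If the supremum bracket equals $0$, which happens when $J_i$ has compact support, then at the center point it is $<0$ only pointwise, and I would instead use a test function that decays toward the boundary.

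\textbf{Step 3 (conclusion).} Let $d^*$ be the unique zero of $s\mapsto\lambda_p(h_0,s)$. Then $\lambda_p\ge0$ exactly when $s\le d^*$, and the criterion above gives (i). For $s>d^*$ we have $\lambda_p(h_0,s)<0$. Applying the argument of Theorem \ref{T13}(iii)(b) yields $\mu_0^*$. That argument shows vanishing for small $\mu$ via an upper solution built from the eigenfunction, and spreading for large $\mu$ via comparison with a problem whose $h$ grows past any $L$. Monotonicity in $\mu$ comes from the comparison principle.
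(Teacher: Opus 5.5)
Your architecture matches the paper's. Continuity, strict monotonicity and the two limits of the eigenvalue in $s$ (Proposition \ref{c9}) produce $d^*$. For $s\le d^*$ you combine Lemma \ref{d3} with strict monotonicity in the length and $h_\infty-g_\infty>2h_0$, exactly as in Lemma \ref{e8}. For $s>d^*$ you invoke Theorem \ref{T13}(iii)(b), which is what Lemmas \ref{e9} and \ref{313} redo. Your unweighted eigenvalue uses the opposite sign convention to \eqref{c4}. Since \eqref{c4} is your system with its rows divided by $e$ and $G'(0)$, the two principal eigenvalues always have opposite signs, so nothing changes.

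The one step that fails as written is the limit $\lambda_p(h_0,s)\to-\infty$ as $s\to\infty$, which is what guarantees $d^*<\infty$. The paper takes this limit from \cite[Theorem 1.3]{fb8}.

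With a constant test pair, the bracket $\sup_x\int_{-h_0}^{h_0}J_i(x-y)\,\rd y-1$ vanishes whenever $\mathrm{supp}\,J_i\subset[-h_0,h_0]$ (take $x=0$). So your assertion that it is negative is false, and the argument breaks down as soon as one of $J_1,J_2$ is supported in $[-h_0,h_0]$. In that case the integral equals $1$ at the center and is $<1$ only near $x=\pm h_0$, not at the center as you wrote.

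Your replacement, a ``test function that decays toward the boundary'', is left unspecified. Not every decaying profile works, because near $x=\pm h_0$ the convolution picks up the larger interior values.

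The natural choice is the principal eigenfunction.
\begin{itemize}
\item $N_i$ is compact and self-adjoint on $L^2(-h_0,h_0)$, with top eigenvalue $\kappa_i$ and an eigenfunction $\psi_i$ that is continuous and positive on $[-h_0,h_0]$. Positivity follows from $J_i(0)>0$ and connectedness.
\item $\kappa_i<1$, by the same inequality $\int_{-h_0}^{h_0}J_i(x-y)\,\rd y<1$ near the endpoints that gives strictness in your Step 1.
\item Testing with $(\psi_1,\psi_2)$ in the Collatz--Wielandt bound gives $\lambda_p(h_0,s)\le s\max_i d_i^0(\kappa_i-1)+\max\{e\max_{[-h_0,h_0]}(\psi_2/\psi_1),\,G'(0)\max_{[-h_0,h_0]}(\psi_1/\psi_2)\}$, which tends to $-\infty$.
\item Equivalently, in your symmetrized Rayleigh quotient the diffusion part is bounded above by $-s\min_i d_i^0(1-\kappa_i)$ times the weighted norm.
\end{itemize}
With this repair the rest of your sketch goes through.
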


The constant $d^*$ depends only on $\left(a, b, e, G^{\prime}(0), J_1, J_2, h_0\right)$, which is determined by an eigenvalue problem.

\begin{remark}
\textnormal{(i) Theorem \ref{T14} complements the results in section 5 of \cite{fb7}, where $d_1^0=d_2^0$ was assumed and the case $s=d^*$ was left open. }

\textnormal{(ii) Combining Theorems \ref{T13} and \ref{T14}, we can easily deduce that \( d^{*} \geq \frac{\sqrt{(a-b)^2+4eG'(0)}-(a+b)}{2} \).}

\textnormal{(ii)} \textnormal{Theorem \ref{T14} indicates that maintaining a slow diffusion rate is conducive to the persistence of species $u$ and $v$. This result reinforces the findings in several related works of Lou et al.  \cite{fb10,fb11,fb12}, where
  a slow diffusion rate is  always favored. }
\end{remark}

When $\mathcal R_0=1$, it follows from \cite{fb31}  that  vanishing always happens for \eqref{A} if $W=W_{J_2}$. For \eqref{A} with a general $W$, in this case we can only show
\begin{equation}\label{conv-0}
(u(t,x), v(t,x))\to (0, 0) \mbox{ uniformly for $x\in [g(t), h(t)]$ as $t\to\infty$.}
\end{equation}
We can prove $[g_\infty, h_\infty]$ is a finite interval only under some extra conditions. Let us note that when $\mathcal R_0=1$, we may write
\begin{equation}\label{R0=1}
G(u)=\frac {ab}eu-g(u)u, \mbox{ with } g(0)=0,\ g'(u)>0 \mbox{ for } u>0.
\end{equation}

\begin{theorem}\label{thm6}
Suppose $\mathcal R_0=1$, and $(u,g,h)$ is the unique solution of \eqref{A}. Then \eqref{conv-0} holds. Moreover, $[g_\infty, h_\infty]$ is a finite interval
if  either {\rm (i)} or {\rm (ii)} of the following holds:
\begin{itemize}
\item[(i)] $W(x)\leq \eta W_{J_2}(x)$ for all $x\geq 0$ and some constant $\eta>0$,
\item[(ii)] $g(u)$ in \eqref{R0=1} satisfies $g(u)\geq \sigma u^\lambda$ for some $\sigma>0,\ \lambda\in (0,1)$ and all small $u>0$.
\end{itemize}
\end{theorem}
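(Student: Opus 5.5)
We prove the convergence \eqref{conv-0} first, by comparison with the ODE system \eqref{1}. Because $u,v\ge 0$ vanish outside $[g(t),h(t)]$, the nonlocal terms satisfy $d_i\int_{g}^{h}J_i(x-y)w(t,y)\,\rd y-d_iw\le d_i\|w(t,\cdot)\|_\infty-d_iw$. So $(\bar u(t),\bar v(t))=(\max u(t,\cdot),\max v(t,\cdot))$, or more cleanly the solution of \eqref{1} started from $(\|u_0\|_\infty,\|v_0\|_\infty)$, is an upper solution. Indeed, $\max_x u(t,\cdot)$ obeys $\bar u'\le -a\bar u+e\bar v$ in the Dini sense, and similarly for $v$. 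By Theorem \ref{T11}'s comparison principle this gives $u\le U(t)$ and $v\le V(t)$, where $(U,V)$ solves \eqref{1} with $\mathcal R_0=1$. Since \eqref{R0=1} gives $G(U)<\frac{ab}{e}U$ for $U>0$, the function $\mathcal L=U+\frac{a}{e}\cdot\frac{e}{a}\,$-type Lyapunov combination $E(t)=bU+eV$ satisfies
\[
E'=-abU+ebV-beV+eG(U)=-e\,g(U)U\le 0 .
\]
(We choose the weights so that the linear parts cancel. With $E=\frac{b}{e}U+V$ this holds because $\frac{b}{e}(-aU+eV)-bV+G(U)=-g(U)U$.) LaSalle's principle then gives $(U,V)\to(0,0)$, which yields \eqref{conv-0}.

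For finiteness of $h_\infty-g_\infty$, note that $h'-g'=2\mu\int_g^h[uW_{J_1}(h-x)+\rho vW(h-x)]\,\rd x$. It suffices to show $\int_0^\infty\!\int_g^h(u+v)\,\rd x\,\rd t<\infty$, since $W_{J_1}\le 1$ and $W\le\|W\|_\infty$. We integrate the equations over $[g(t),h(t)]$ and use $u=v=0$ at the free boundaries, so there are no boundary terms from differentiating the integral. With $\int_g^h\!\int_g^h J(x-y)w(y)\,\rd y\,\rd x-\int_g^h w=-\int_g^h w(x)\int_{\mathbb R\setminus[g,h]}J(x-y)\,\rd y\,\rd x$, we get
\[
\frac{d}{dt}\int_g^h\Big(\tfrac be u+v\Big)\rd x=-\tfrac{b d_1}{e}\int_g^h u\,[W_{J_1}(h-x)+W_{J_1}(x-g)]\,\rd x-d_2\int_g^h v\,[W_{J_2}(h-x)+W_{J_2}(x-g)]\,\rd x-\int_g^h g(u)u\,\rd x .
\]
Integrating in time bounds $\int_0^\infty\!\int_g^h uW_{J_1}(h-x)$ and $\int_0^\infty\!\int_g^h vW_{J_2}(h-x)$, and likewise at $g$. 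Under (i), $W\le\eta W_{J_2}$, so $h'-g'$ is controlled by these dissipation terms and $h_\infty-g_\infty<\infty$ follows immediately.

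Case (ii) is the main obstacle, because $W$ need not be dominated by $W_{J_2}$ and the dissipation only controls $\int\!\!\int g(u)u\ge\sigma\int\!\!\int u^{1+\lambda}$. The plan is to argue by contradiction: suppose $h_\infty-g_\infty=\infty$. The $v$-equation together with the decay of $u$ gives $\int_g^h v\lesssim$ a convolution of $\int_g^h G(u)\lesssim\int_g^h u$ with $e^{-bt}$, so it suffices to bound $\int_0^\infty\!\int_g^h u$. We would use Hölder: $\int_g^h u\le (h-g)^{\lambda/(1+\lambda)}\big(\int u^{1+\lambda}\big)^{1/(1+\lambda)}$. Combined with the integral identity above, this yields a differential inequality of the form $(h-g)'\le C\big(-\frac{d}{dt}M\big)^{1/(1+\lambda)}(h-g)^{\lambda/(1+\lambda)}$, where $M=\int(\frac be u+v)$ is decreasing. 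We then also need a decay rate for $M$, or for $\|u\|_\infty$. The ODE upper solution with nonlinear damping $g(U)U\ge\sigma U^{1+\lambda}$ decays algebraically, like $t^{-1/\lambda}$, and $1/\lambda>1$ because $\lambda<1$. This is where the condition $\lambda\in(0,1)$ enters: it makes $\|u\|_\infty$, and hence $\|v\|_\infty$, integrable in time. If the free boundaries grow at most linearly (from $h'\le C(h-g)\sup(u+v)$ we get $h-g$ bounded whenever $\int_0^\infty\sup(u+v)\,\rd t<\infty$, by Gronwall), the result follows. So the key step is to prove $\int_0^\infty\|(u,v)(t)\|_\infty\,\rd t<\infty$ via the algebraic decay $t^{-1/\lambda}$ of the Lyapunov ODE. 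Gronwall applied to $(h-g)'\le 2\mu(1+\rho\|W\|_\infty)(h-g)(U+V)$ then closes the argument.
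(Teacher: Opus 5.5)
Your proof of \eqref{conv-0} is the paper's argument: an ODE supersolution plus the Lyapunov function $\frac be U+V$, which is the paper's $\alpha U+V$ with $\alpha=G'(0)/a=b/e$. For (i) you take a different, self-contained route. The paper compares with the problem where $\rho W$ is replaced by $\rho\eta W_{J_2}$ and quotes Lemma 4.2 of \cite{fb31}. Your identity
\[
\frac{\rd}{\rd t}\int_g^h\Big(\frac be u+v\Big)\rd x=-\frac{bd_1}{e}\int_g^h u\,[W_{J_1}(h-x)+W_{J_1}(x-g)]\,\rd x-d_2\int_g^h v\,[W_{J_2}(h-x)+W_{J_2}(x-g)]\,\rd x-\int_g^h g(u)u\,\rd x
\]
is correct, and under (i) it bounds $\int_0^\infty(h'-g')\,\rd t$ directly by $C\int_{-h_0}^{h_0}(\frac be u_0+v_0)\,\rd x$. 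The correct $g'$ contains $W_{J_1}(x-g)$ and $W(x-g)$, not the $h-x$ terms in your formula for $h'-g'$. Since your identity carries both end terms, this changes nothing.

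The gap is in (ii). Like the paper, you reduce it to showing $\int_0^\infty(U+V)\,\rd t<\infty$ for the ODE supersolution, followed by Gronwall. However, the decisive claim that $(U,V)$ decays like $t^{-1/\lambda}$ is only asserted. It does not follow from the Lyapunov identity: $E'=-g(U)U\le-\sigma U^{1+\lambda}$ dissipates only the $U$-component. To get a closed inequality for $E$ you must show $U\ge cE$, i.e.\ that $V$ cannot stay large while $U$ is small. The linearization at $(0,0)$ has eigenvalues $0$ and $-(a+b)$, so the rate is set by the nonlinear motion along the center direction $V\approx\frac ae U$. The scalar heuristic $U'=-\sigma U^{1+\lambda}$ is therefore not automatic. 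This is exactly what most of the paper's proof of (ii) addresses: with $w=bU+eV$ it derives $U''+(a+b)U'+eg(U)U=0$ and uses a phase-plane argument to show $U'<-\beta g(U)U$ for large $t$.

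You can close the gap with what you already have. Since $U'=-(a+b)U+w$ and $w=eE$ is nonincreasing,
\[
U(t)\ge\int_0^t e^{-(a+b)(t-s)}w(s)\,\rd s\ge\frac{1-e^{-(a+b)t}}{a+b}\,w(t).
\]
Hence for large $t$ one has $w'=-eg(U)U\le-e\sigma U^{1+\lambda}\le-c\,w^{1+\lambda}$, so $w(t)=O(t^{-1/\lambda})$, which is integrable because $\lambda<1$. Then $U\le w/b$ and $V\le w/e$ are integrable, and your Gronwall step finishes the proof, more quickly than the paper's phase-plane analysis. The H\"older inequality involving $M$ earlier in your discussion of (ii) is not needed and, as written, does not close.
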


The rest of this paper is organized as follows. In Section 2, we prove the global existence and uniqueness of the solution to \eqref{A}, and present some basic results on comparison principles, on an eigenvalue problem and on some fixed boundary problems. Section 3 is devoted to the proof of Theorems \ref{T12}, \ref{T13}, \ref{T14} and \ref{thm6}.

\section{Global existence and uniqueness, and some other basic results}
\subsection{Global existence and uniqueness}

For convenience, we introduce the following notations. For any given $h_0, T>0$, we define
$$
\begin{aligned}
& \mathbb{H}_{T}=\mathbb{H}_{h_0, T}:=\left\{h \in C([0, T]): h(0)=h_0, h(t) \text { is strictly increasing }\right\}, \\
& \mathbb{G}_{T}=\mathbb{G}_{h_0, T}:=\left\{g \in C([0, T]):-g \in \mathbb{H}_{h_0, T}\right\} .
\end{aligned}
$$

For any $g \in \mathbb{G}_{T}, h \in \mathbb{H}_{T}$, and initial data $(u_{0}, v_{0})$ satisfying \eqref{B}, we define
\[D_T=D_{g, h}^T  :=\{(t, x)\in \mathbb{R}^2: 0<t\leq T, g(t)<x<h(t)\},\]
and
$$
\begin{aligned}
\mathbb{X}_T=\mathbb{X}_{u_0,v_0,g, h}^T  :=\{(\phi_1, \phi_2)\in [C\left(\bar{D}_T\right)]^2:
&(\phi_1(0, x), \phi_2(0, x))=(u_{0}(x), v_0(x)) ~ \mbox{ in }~[-h_0, h_0],\\
&\hspace{-0.9cm}\phi_i \geqslant 0,~\phi_i(t, g(t))=\phi_i(t, h(t))=0 ~ \mbox{ in }[0, T], i=1,2\}.
\end{aligned}
$$

\begin{lemma}\label{vv}
\textnormal{(Maximum principle)}. Suppose that {\bf{(J)}} holds, and $g \in \mathbb{G}_{T}, h \in \mathbb{H}_{T}$ for some $h_0,T>0$. Assume that $u(t, x), v(t, x),u_t(t, x)$ as well as $v_t(t, x)$ are continuous in $\bar{D}_T$ and satisfy
\begin{equation*}
\begin{cases}
u_t(t, x) \geq d_1 \displaystyle\int_{g(t)}^{h(t)} J_1(x-y) u(t, y) \rd y-d_1 u+a_{11}u+a_{12} v, & 0<t\leq T,x \in(g(t), h(t)), \\
v_t(t, x) \geq d_2 \displaystyle\int_{g(t)}^{h(t)} J_2(x-y) v(t, y) \rd y-d_2 v+a_{21}v+a_{22} u, & 0<t\leq T,x \in(g(t), h(t)), \\
u(t, x) \geq 0, ~v(t, x) \geq 0, & 0<t\leq T, x\in \{g(t),h(t)\}, \\
u(0, x) \geq 0, ~ v(0, x) \geq 0, & |x| \leq h_0 ,
\end{cases}
\end{equation*}
where $a_{11},a_{12},a_{21},a_{22} \in L^\infty (D_T)$ with $a_{12}, a_{22}\geq 0$. Then $u(t, x)\geq 0,~ v(t, x)\geq 0$ for all $(t, x) \in \bar{D}_T$. Moreover, if $u(0, x)\not \equiv 0, v(0, x)\not \equiv 0$ for $|x| \leq h_0$, then $u(t, x)>0, v(t, x)>0$ in $D_T$.
\end{lemma}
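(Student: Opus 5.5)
We follow the standard argument for nonlocal free boundary problems (as in Cao et al.\ \cite{fb27}), adapted to the cooperative $2\times 2$ system. First we make the coefficients harmless with an exponential change of variables. Pick a constant $k>\|a_{11}\|_\infty+\|a_{21}\|_\infty+d_1+d_2$ and set $\tilde u=e^{-kt}u$, $\tilde v=e^{-kt}v$. Then $(\tilde u,\tilde v)$ satisfies the same system of inequalities with $a_{11}$ replaced by $a_{11}-k$ and $a_{21}$ by $a_{21}-k$. The off-diagonal coefficients $a_{12},a_{22}\ge 0$ are unchanged, and the new diagonal coefficients $a_{11}-k-d_1$ and $a_{21}-k-d_2$ are strictly negative. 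So it suffices to prove nonnegativity for $(\tilde u,\tilde v)$.

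\textbf{Nonnegativity.} Suppose $\min\{\min_{\bar D_T}\tilde u,\ \min_{\bar D_T}\tilde v\}<0$.
\begin{itemize}
\item By continuity on the compact set $\bar D_T$, this negative minimum is attained, say by $\tilde u$, at some $(t_0,x_0)$.
\item The boundary and initial conditions force $t_0>0$ and $x_0\in(g(t_0),h(t_0))$.
\item At this point $\tilde u_t(t_0,x_0)\le 0$. This also holds when $t_0=T$, using one-sided derivatives; here we use that $g,h$ are monotone, so $(t_0-\delta,x_0)\in D_T$ for small $\delta>0$.
\item On the other side we need a positive lower bound:
\[
d_1\int_{g(t_0)}^{h(t_0)}J_1(x_0-y)\tilde u(t_0,y)\,\rd y\ \ge\ d_1\tilde u(t_0,x_0)\!\int J_1 .
\]
In the nonlocal term, replacing $\tilde u(t_0,y)$ by its minimum gives at least $d_1\tilde u(t_0,x_0)$ (the minimum is negative and $\int J_1\le1$). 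Also $a_{12}\tilde v\ge a_{12}\tilde u(t_0,x_0)$, since $\tilde u(t_0,x_0)$ is the global minimum of both functions.
\item Altogether the right-hand side is at least $(a_{11}-k-d_1+a_{12})\tilde u(t_0,x_0)$ in the worst case. To make this coefficient negative we must also dominate $a_{12}$: choose $k>\|a_{11}\|_\infty+\|a_{12}\|_\infty+\|a_{21}\|_\infty+\|a_{22}\|_\infty+d_1+d_2$.
\item Then the right-hand side is strictly positive, contradicting $\tilde u_t\le 0$.
\end{itemize}
The same argument applies if the minimum is attained by $\tilde v$.

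The bookkeeping here is delicate because the minimum must be taken jointly over both components. If this becomes messy, an alternative is to first perturb to $\tilde u+\varepsilon e^{t}$ to get strict inequalities, or to argue on a short time interval and iterate.

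\textbf{Strict positivity.} Now $u,v\ge0$, and we additionally assume $u(0,\cdot)\not\equiv0$ and $v(0,\cdot)\not\equiv0$. Dropping the nonnegative nonlocal and coupling terms gives
\[
u_t\ge -(d_1+\|a_{11}\|_\infty)u .
\]
Hence $u(t,x)\ge e^{-Ct}u(0,x)>0$ whenever $u(0,x)>0$. To spread positivity everywhere we use the nonlocal term together with $J_1(0)>0$, which gives $J_1>0$ on some $(-\delta,\delta)$.
\begin{itemize}
\item Suppose $u(t^*,x^*)=0$ for some $(t^*,x^*)\in D_T$.
\item Then $u_t(t^*,x^*)\le0$, since zero is a minimum.
\item The inequality then forces $\int J_1(x^*-y)u(t^*,y)\,\rd y=0$, so $u(t^*,\cdot)\equiv0$ on $(x^*-\delta,x^*+\delta)$.
\item Iterating this across the interval $(g(t^*),h(t^*))$ gives $u(t^*,\cdot)\equiv0$ there.
\item But the ODE lower bound, applied along $x$ in the initial support (which stays inside $(g(t),h(t))$ because the domain expands), gives positivity at time $t^*$. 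This is a contradiction.
\end{itemize}
The same reasoning gives $v>0$.

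The main obstacle is this last step when $x^*$ lies outside $[-h_0,h_0]$, i.e.\ in the newly covered region. There we use a point $(t^*,x^*)$ with $x^*\in(g(t^*),h(t^*))$ and apply the zero-set propagation at the fixed time $t^*$, as above. This yields positivity throughout $D_T$.
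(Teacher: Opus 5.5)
Your proof is correct. The paper omits its own proof and only points to Lemma 3.1 of \cite{fb1}; your argument is a valid instance of the standard comparison argument it alludes to: damping by $e^{-kt}$, taking the joint negative minimum over both components so that $a_{12},a_{22}\ge 0$ can be used, and getting strict positivity by propagating the zero set at a fixed time via $J_i(0)>0$.

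Two small corrections. At the minimum the two $d_1$ terms cancel, so the right-hand side is bounded below by $(a_{11}+a_{12}-k)\tilde u(t_0,x_0)$, not $(a_{11}-k-d_1+a_{12})\tilde u(t_0,x_0)$; your final choice of $k$ still makes this positive. Also, $(t,x_0)\in D_T$ for $t$ slightly below $t_0$ follows from the continuity of $g,h$, not from their monotonicity, which shrinks the interval as $t$ decreases.
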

\noindent{\bf{Proof.}}  The lemma can be proved by the comparison arguments similar to the proof of Lemma 3.1 in \cite{fb1}. Here we omit the details.\qed

\begin{lemma}\label{cc}
Assume that {\bf{(J)}} holds, $G(u)$ satisfies {\bf(G1)-(G2)}, $h_0 > 0$ and $(u_0, v_0)$ satisfies \eqref{B}. Then for any $T > 0$ and $(g, h) \in \mathbb{G}_T \times \mathbb{H}_T$, the problem
\begin{equation}\label{a1}
\begin{cases}
u_t = d_1 \displaystyle\int_{g(t)}^{h(t)} J_1(x-y) u(t,y) \, \rd y - d_1u - au + ev, & 0 < t \leq T, \, x \in (g(t), h(t)), \\
v_t = d_2 \displaystyle\int_{g(t)}^{h(t)} J_2(x-y) v(t,y) \, \rd y - d_2v - bv + G(u), & 0 < t \leq T, \, x \in (g(t), h(t)), \\
u(t,x) = v(t,x) = 0, & 0 < t \leq T, \, x \in \{g(t), h(t)\}, \\
-g(0) = h(0) = h_0, \quad u(0,x) = u_0(x), \quad v(0,x) = v_0(x), & |x| \leq h_0
\end{cases}
\end{equation}
admits a unique solution $(U_{g,h}, V_{g,h}) \in \mathbb{X}_T$. Furthermore, $U_{g,h}, V_{g,h}$ satisfy
\begin{align}\label{a6}
&0 < U_{g,h}(t,x) \leq A := \max\left\{ u^*, \|u_0\|_\infty, \frac{e}{a} \|v_0\|_\infty \right\}, \\
&0 < V_{g,h}(t,x) \leq B := \max\left\{ \|v_0\|_\infty, \frac{G(A)}{b} \right\}
\end{align}
for all $t \in (0,T]$ and $x \in (g(t), h(t))$, where $u^*$ is uniquely determined by \eqref{q} if $\mathcal{R}_0 > 1$, and $u^* = 0$ if $\mathcal{R}_0 \leq 1$.
\end{lemma}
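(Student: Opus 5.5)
We prove Lemma \ref{cc} by a contraction mapping argument on $\mathbb{X}_T$, following the approach of Cao et al.~\cite{fb27} for the scalar case. Since $G$ is only locally Lipschitz on $[0,\infty)$, we first replace it by the truncation $\tilde G(u)=G(\min\{\max\{u,0\},A\})$, which is globally Lipschitz on $\mathbb{R}$ with constant $L_G=\max_{[0,A]}G'$. We then show that the solution of the truncated problem stays in $[0,A]$, so that it also solves \eqref{a1}.

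\textbf{Step 1: the solution map.} Given $(\phi_1,\phi_2)\in\mathbb{X}_T$, we use the boundary data to define the entry time $t_x$ of each point $x\in(g(T),h(T))$: we set $t_x=0$ for $|x|\le h_0$, and otherwise $t_x$ is the unique time with $x=g(t_x)$ or $x=h(t_x)$. Such a time exists because $g$ and $h$ are strictly monotone. For each fixed $x$ we solve the linear ODE system
\begin{align*}
&u_t=-(d_1+a)u+d_1\int_{g(t)}^{h(t)}J_1(x-y)\phi_1(t,y)\,\rd y+e\phi_2, \\
&v_t=-(d_2+b)v+d_2\int_{g(t)}^{h(t)}J_2(x-y)\phi_2(t,y)\,\rd y+\tilde G(\phi_1),
\end{align*}
for $t\in(t_x,T]$, with initial data $(u_0,v_0)(x)$ if $|x|\le h_0$ and zero data otherwise. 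The solution is explicit via the variation of constants formula. This defines $\Gamma(\phi_1,\phi_2)=(u,v)$.

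We check that $\Gamma(\phi_1,\phi_2)\in\mathbb{X}_T$. Nonnegativity holds because all forcing terms are nonnegative. The solution vanishes on the lateral boundary by the zero data at $t_x$. Continuity on $\bar D_T$ follows from the continuity of $t_x$ in $x$ and the continuity of the forcing, exactly as in \cite{fb27}.

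\textbf{Step 2: contraction.} We work on the closed set of pairs in $\mathbb{X}_s$ bounded by a constant $M$, for instance $M=2\max\{A,B\}+1$. Differencing two images and using $\|J_i\|_{L^1}=1$ together with the Lipschitz bound on $\tilde G$ gives
\[
\|\Gamma\Phi-\Gamma\Psi\|_{C(\bar D_s)}\le s\,\big(d_1+d_2+e+L_G\big)\,\|\Phi-\Psi\|_{C(\bar D_s)}.
\]
A similar estimate shows that $\Gamma$ maps the $M$-ball into itself when $s$ is small. Hence $\Gamma$ is a contraction for small $s$, and we obtain a unique local solution.

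\textbf{Step 3: a priori bounds and extension.} Once we know the bound $0\le U\le A$, the step size $s$ is independent of the starting time, so the local solution can be extended step by step to all of $[0,T]$. Each restart uses the current profile as initial data on the current domain, and continues to use the entry-time construction on the newly added region. Uniqueness follows from the contraction property on each step.

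\textbf{Step 4: the bounds (main obstacle).} Establishing the uniform bounds in \eqref{a6} is the key point.

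\emph{Positivity.} Lemma \ref{vv} gives $U,V\ge0$ and, since $u_0,v_0\not\equiv0$, also $U,V>0$ in $D_T$. Here the coupling coefficient $\tilde G(U)/U\ge0$ plays the role of $a_{22}$.

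\emph{Upper bounds.} We compare $(U,V)$ with the ODE system
\[
\bar u'=-a\bar u+e\bar v,\qquad \bar v'=-b\bar v+G(\bar u),
\]
using the constant initial data $(A,\frac aeA)$. The constant pair $(A,\frac aeA)$ is an upper solution. Indeed, $e\cdot\frac aeA-aA=0$, and $G(A)\le\frac{ab}{e}A$ holds because $A\ge u^*$ and, by (G2), $G(z)/z\le ab/e$ for $z\ge u^*$ (for all $z>0$ when $\mathcal R_0\le1$). The nonlocal terms are handled by the inequality
\[
\int_{g}^{h}J_1(x-y)\,c\,\rd y-c\le0\quad\text{for constants }c\ge0.
\]
We also need the initial ordering $v_0\le\frac aeA$, which holds since $A\ge\frac ea\|v_0\|_\infty$ was built into the definition of $A$.

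Applying Lemma \ref{vv} to the differences $(A-U,\ \frac aeA-V)$ then yields $U\le A$. The linearization of the $G$ term is valid here via the mean value theorem, with a nonnegative coefficient $G'(\xi)\ge0$ in the $a_{22}$ position.

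Next, $V$ satisfies
\[
V_t\le d_2\int_{g}^{h}J_2(x-y)V(t,y)\,\rd y-d_2V-bV+G(A).
\]
Comparing with the constant $B$, which satisfies $-bB+G(A)\le0$ and $B\ge\|v_0\|_\infty$, gives $V\le B$.

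Since $U\le A$, the truncation $\tilde G$ coincides with $G$ along the solution, so the solution of the truncated problem solves \eqref{a1}.
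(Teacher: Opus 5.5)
Your proposal is correct and follows the standard route the paper invokes by citing \cite{fb4} and \cite[Theorem 4.1]{fb1}: an entry-time ODE map that is a contraction on short time intervals, extension in uniform steps, and the bounds $U\le A$, $V\le B$ obtained via Lemma \ref{vv} by comparison with the constant upper solutions $(A,\frac{a}{e}A)$ and $B$. The truncation of $G$ is a harmless convenience, but state explicitly that uniqueness for the untruncated problem \eqref{a1} comes from applying your Step 4 comparison to an arbitrary solution in $\mathbb{X}_T$; such a solution stays in $[0,A]$, hence solves the truncated problem, where the contraction gives uniqueness.
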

\noindent{\bf{Proof.}} The proof of this lemma is similar to the previous works (see, e.g., \cite{fb4} or \cite[Theorem 4.1]{fb1}. Here, we omit the details of the proof. \qed

Let $(U_{g,h}, V_{g,h})$ be the unique positive solution of \eqref{a1} given by Lemma \ref{cc}. Now, using such a $(U_{g,h}, V_{g,h})$, we define the mapping $\mathcal{F}$ by
\[
\mathcal{F}(g,h)(t) = (\tilde{g}(t), \tilde{h}(t)),
\]
where
\begin{align}\label{a2}
\tilde{h}(t): =
&\ h_0 + \mu\int_0^t  \int_{g(\tau)}^{h(\tau)}  U_{g,h}(\tau,x)W_{J_1}(h(\tau)-x)  \rd x \rd \tau\nonumber\\
&+\mu\rho \int_0^t\int_{g(\tau)}^{h(\tau)} V_{g,h}(\tau,x) W(h(\tau)-x)\rd x \rd \tau, \\
\tilde{g}(t) =
&-h_0 - \mu \int_0^t \int_{g(\tau)}^{h(\tau)} U_{g,h}(\tau,x)W_{J_1}(x-g(\tau)) \rd x \rd \tau\nonumber\\
&-\mu\rho \int_0^t \int_{g(\tau)}^{h(\tau)} V_{g,h}(\tau,x) W(x-g(\tau)) \rd x \rd \tau
\end{align}
for $(g,h) \in \mathbb{G}_T \times \mathbb{H}_T$ and $0 < t \leq T$.

For any positive constants $\varepsilon, s, \alpha, \beta$, we define
\begin{equation*}\begin{aligned}
\Sigma_{\varepsilon, s, \alpha, \beta} := \Big\{& (g,h) \in \mathbb{G}_s \times \mathbb{H}_s : \ h(t) - g(t) \leq 2h_0 + \frac{\varepsilon}{4}, \ \mbox{ and }
\\
&\sup_{0 \leq t_1 < t_2 \leq s} \frac{g(t_2) - g(t_1)}{t_2 - t_1} \leq -\mu\alpha,\ \inf_{0 \leq t_1 < t_2 \leq s} \frac{h(t_2) - h(t_1)}{t_2 - t_1} \geq \mu\beta
 \text{ for } t \in [0,s] \Big\}
\end{aligned}
\end{equation*}
and for $(g_1, h_1), (g_2, h_2) \in \Sigma_{\varepsilon, s, \alpha, \beta}$,
\[
d = d((g_1, h_1), (g_2, h_2)) := \|g_1 - g_2\|_{C([0,s])} + \|h_1 - h_2\|_{C([0,s])}.
\]
 Clearly, $(\Sigma_{\varepsilon, s, \alpha, \beta}, d)$ is a complete metric space.

Next, we show that $\mathcal{F}$ maps a suitable closed subset of $\mathbb{G}_T \times \mathbb{H}_T$ into itself, and is a contraction mapping.

\begin{lemma}\label{a3}  There exist positive constants $\varepsilon_0, T_0, \alpha_0, \beta_0$ such that
\begin{align*}
\mathcal{F}(\Sigma_{\varepsilon_0, s, \alpha_0, \beta_0}) \subseteq \Sigma_{\varepsilon_0, s, \alpha_0, \beta_0} \quad \text{for } s \in (0, T_0].
\end{align*}
\end{lemma}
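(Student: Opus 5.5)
The plan is to make a concrete choice of the four constants and then check the three defining properties of $\Sigma_{\varepsilon_0,s,\alpha_0,\beta_0}$ for $(\tilde g,\tilde h)=\mathcal F(g,h)$. By \eqref{a2}, $\tilde h$ and $\tilde g$ are $C^1$ on $[0,s]$ with $\tilde h(0)=h_0$, $\tilde g(0)=-h_0$, and
\[
\tilde h'(t)=\mu\int_{g(t)}^{h(t)}\big[U_{g,h}(t,x)W_{J_1}(h(t)-x)+\rho V_{g,h}(t,x)W(h(t)-x)\big]\rd x ,
\]
with the analogous formula for $-\tilde g'(t)$. Hence the difference-quotient conditions reduce to pointwise bounds $\tilde h'(t)\ge\mu\beta_0$ and $\tilde g'(t)\le-\mu\alpha_0$ on $[0,s]$. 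These also give strict monotonicity, so $\tilde h\in\mathbb H_s$ and $\tilde g\in\mathbb G_s$.

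\textbf{Upper bound and the width condition.} By Lemma \ref{cc}, $0<U_{g,h}\le A$ and $0<V_{g,h}\le B$. Since $0\le W_{J_1}\le 1$, $W\le\|W\|_\infty$, and $h-g\le 2h_0+\varepsilon_0/4$, we get
\[
\tilde h'(t),\ -\tilde g'(t)\le \mu(2h_0+\varepsilon_0/4)\big(A+\rho B\|W\|_\infty\big)=:\mu M .
\]
Therefore $\tilde h(t)-\tilde g(t)\le 2h_0+2\mu M s$. Once $\varepsilon_0$ is fixed (any value, e.g. $\varepsilon_0=h_0$), choosing $T_0\le \varepsilon_0/(8\mu M)$ gives the required width bound for all $s\le T_0$.

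\textbf{Lower bound (the main step).} We need a positive lower bound on $U_{g,h}$ near $(0,h_0)$ that is uniform over $(g,h)\in\Sigma$. Since $u_0>0$ in $(-h_0,h_0)$ and is continuous, there is $\delta>0$ with $u_0\ge m_0>0$ on $[h_0-2\delta,h_0-\delta]$, and likewise near $-h_0$. Dropping the nonnegative terms $d_1\int J_1U\,\rd y$ and $eV$ (the latter is nonnegative by Lemma \ref{vv}), we get $U_t\ge-(d_1+a)U$ wherever $x\in(g(t),h(t))$. Points $x\in[-h_0,h_0]$ stay inside $(g(t),h(t))$ for all $t>0$ because $g$ decreases and $h$ increases. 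So $U_{g,h}(t,x)\ge u_0(x)e^{-(d_1+a)t}\ge m_0e^{-(d_1+a)T_0}$ for $x\in[h_0-2\delta,h_0-\delta]$, uniformly in the class. Moreover, $h(t)-x\le \varepsilon_0/4+2\delta=:\ell$ on this strip, and $W_{J_1}$ is nonincreasing with $W_{J_1}(\ell)>0$ because $J_1(0)>0$ and $J_1$ is continuous. Using also $V\ge0$ and $W\ge0$, this yields
\[
\tilde h'(t)\ge \mu\,\delta\, m_0e^{-(d_1+a)T_0}W_{J_1}(\ell).
\]
The same argument near $-h_0$ handles $\tilde g$. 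We then set $\beta_0,\alpha_0$ equal to these constants (taking $T_0\le1$ first, so they do not depend on $T_0$ further). These constants depend only on $u_0,d_1,a,J_1,h_0$, not on $\alpha_0,\beta_0$ themselves, so there is no circularity.

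The main obstacle is making this lower bound independent of $(g,h)$ and of the yet-to-be-chosen $\alpha_0,\beta_0$; the crude ODE comparison handles this. The rest is bookkeeping of constants in the order $\varepsilon_0$, then $\delta,m_0$, then the bound $T_0\le1$, then $\alpha_0,\beta_0$, and finally shrinking $T_0$ for the width condition. Shrinking $T_0$ only increases $e^{-(d_1+a)T_0}$, so the lower bounds remain valid.
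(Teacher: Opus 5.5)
Your route is the paper's: the comparison $U_{g,h}\ge e^{-(d_1+a)t}u_0$ on $[-h_0,h_0]$, the width bound from Lemma \ref{cc} with a small $T_0$, and pointwise lower bounds for $\tilde h'$ and $-\tilde g'$. Dropping the $V$-contribution is legitimate; the paper keeps a $W$-term in $\beta_0$, but positivity already comes from the $u_0$-term. Your ordering of the constants also avoids circularity.

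One step fails as written: the claim that $W_{J_1}(\ell)>0$ ``because $J_1(0)>0$ and $J_1$ is continuous''. These facts only give $J_1>0$ on some interval $(-r,r)$. So $W_{J_1}(\ell)=\int_\ell^\infty J_1>0$ is guaranteed only for $\ell<r$, or more precisely for $\ell<\sup\{x: J_1(x)>0\}$. Assumption \textbf{(J)} allows compactly supported kernels. With your suggested $\varepsilon_0=h_0$ you have $\ell=\varepsilon_0/4+2\delta>h_0/4$. If, say, $\operatorname{supp}J_1\subset[-h_0/8,h_0/8]$, then $W_{J_1}(\ell)=0$ and your $\beta_0$ vanishes.

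This is not just a weakness of the estimate. Members of $\Sigma_{\varepsilon_0,s,\alpha_0,\beta_0}$ may have $h(t)$ close to $h_0+\varepsilon_0/4$ for arbitrarily small $t>0$. Meanwhile $U_{g,h}$ is only $O(t)$ on $(h_0,h(t))$, because it starts from $0$ there and $(U_{g,h})_t\le d_1A+eB$. So when $\varepsilon_0/4$ exceeds the support radius of $J_1$, the $u$-part of the flux has no uniform positive lower bound over $\Sigma_{\varepsilon_0,s,\alpha_0,\beta_0}$.

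The repair is the paper's choice of $\varepsilon_0$ small enough that $J_1\ge\delta_0$ on $[-\varepsilon_0,\varepsilon_0]$: take $\varepsilon_0$ and $\delta$ so small that $\varepsilon_0/4+2\delta<r$. This fits everything else you do. Any $\delta\in(0,h_0)$ still yields some $m_0>0$, by continuity and positivity of $u_0$ on the compact set $[h_0-2\delta,h_0-\delta]\subset(-h_0,h_0)$. The width step works for every $\varepsilon_0$ once $T_0$ is shrunk accordingly. With this change your argument is complete.
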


\noindent{\bf{Proof.}} By the conditions {\bf{(J)}} and {\bf{(W)}}, there exist constants $\varepsilon_0 \in (0, h_0/4)$ and $\delta_0 > 0$ such that
\begin{align}\label{a4}
J_1(x-y) \geq \delta_0 \quad \text{for } |x-y| \leq \varepsilon_0, \quad \text{and} \quad \int_{\frac{5}{4}\varepsilon_0 }^{2h_0 - \varepsilon_0} W(x)\rd x > 0.
\end{align}
We denote
\begin{equation}\label{m}
\begin{aligned}
T_0 :&= \frac{\varepsilon_0}{4\mu(2h_0 + \varepsilon_0) \left[ A + 2\rho B \sup\limits_{x \in [0, 2h_0 + \varepsilon_0]} W(x) \right]},\\
\beta_0 :&= \frac{\varepsilon_0}{4} \delta_0 e^{-(d_1 + a) T_0} \int_{h_0-\frac{\varepsilon_0}{4} }^{h_0 } u_0(x) \rd x + \rho e^{-(d_2 + b) T_0} \min_{|x| \leq h_0 - \varepsilon_0} v_0(x) \int_{\frac{5}{4}\varepsilon_0}^{2h_0 - \varepsilon_0} W(x) \rd x,\\
\alpha_0 :&= \frac{\varepsilon_0}{4} \delta_0 e^{-(d_1 + a) T_0} \int_{-h_0}^{-h_0 + \frac{\varepsilon_0}{4}} u_0(x) \rd x + \rho e^{-(d_2 + b) T_0} \min_{|x| \leq h_0 - \varepsilon_0} v_0(x) \int_{\frac{5}{4}\varepsilon_0}^{2h_0 - \varepsilon_0} W(x) \rd x.
\end{aligned}
\end{equation}
Fix $s \in (0, T_0]$ and $(g,h) \in \Sigma_{\varepsilon_0, s, \alpha_0, \beta_0}$. By the definitions of $\tilde{h}$ and $\tilde{g}$, we have  $(\tilde{g}, \tilde{h}) \in C^1([0,s]) \times C^1([0,s])$ with
\[
\tilde{g}(t) < -h_0 = \tilde{g}(0) \mbox{ and }\tilde{h}(t) > h_0 = \tilde{h}(0)\mbox{ for }t \in (0,s].
\]
Furthermore, according to assumptions {\bf{(J)}} and {\bf{(W)}}, we deduce that
\[
\tilde{g}'(t) < 0 \quad \text{and} \quad \tilde{h}'(t) > 0 \quad \text{for} \quad t \in (0,s].
\]
Hence, $(\tilde{g}, \tilde{h})$ belongs to $\mathbb{G}_s \times \mathbb{H}_s$. Let $(U_{g,h}, V_{g,h})$ be a positive solution of \eqref{a1}. Then we have
\begin{equation*}
\begin{cases}
(U_{g,h})_t \geq -(d_1 + a) U_{g,h}, & 0 < t \leq s, \, x \in (g(t), h(t)), \\
(V_{g,h})_t \geq -(d_2 + b) V_{g,h}, & 0 < t \leq s, \, x \in (g(t), h(t)), \\
U_{g,h}(t,x) = V_{g,h}(t,x) = 0, & 0 < t \leq s, \, x \in \{g(t), h(t)\}, \\
U_{g,h}(0,x) = u_0(x), \quad V_{g,h}(0,x) = v_0(x), & |x| \leq h_0,
\end{cases}
\end{equation*}
which implies that
\begin{align}\label{a5}
U_{g,h}(t,x) &\geq e^{-(d_1 + a)t}u_0(x) \geq e^{-(d_1 + a)s} u_0(x) \quad \text{for} \quad t \in (0,s], \, x \in [-h_0, h_0], \nonumber\\
V_{g,h}(t,x) &\geq e^{-(d_2 + b)t} v_0(x) \geq e^{-(d_2 + b)s}v_0(x) \quad \text{for} \quad t \in (0,s], \, x \in [-h_0, h_0].
\end{align}
By $(g,h) \in \Sigma_{\varepsilon_0, s, \alpha_0, \beta_0}$, we have $h(s) - g(s) \leq 2h_0 + \frac{\varepsilon_0}{4}$ and
\[
h(t) \in \left[ h_0, h_0 + \frac{\varepsilon_0}{4} \right], \quad g(t) \in \left[ -h_0 - \frac{\varepsilon_0}{4}, -h_0 \right] \quad \text{for} \quad t \in [0,s].
\]
Using \eqref{a6} and \eqref{a2}, we easily see
\begin{align*}
&[\tilde{h}(t) - \tilde{g}(t)]' \\
&~~= \mu \left[ \int_{g(t)}^{h(t)} \int_{h(t)}^{+\infty} J_1(x-y) U_{g,h}(t,x)  \rd y \rd x + \int_{g(t)}^{h(t)} \int_{-\infty}^{g(t)} J_1(x-y) U_{g,h}(t,x)  \rd y  \rd x \right] \\
&~~\quad + \mu \rho \left[ \int_{g(t)}^{h(t)} V_{g,h}(t,x) W(h(t)-x)  \rd x + \int_{g(t)}^{h(t)} V_{g,h}(t,x) W(x-g(t))  \rd x \right] \\
&~~\leq \mu \int_{g(t)}^{h(t)} \int_{-\infty}^{+\infty} J_1(x-y) U_{g,h}(t,x)  \rd y  \rd x \\
&~~ \ \ \ \ + \mu \rho  \int_{0}^{h(t)-g(t)} \big[V_{g,h}(t, h(t)-y) +V_{g,h}(t, y+g(t))\big] W(y)  \rd y  \\
&~~\leq \mu A [h(t) - g(t)] + 2 \mu \rho B \sup_{x \in [0, 2h_0 + \varepsilon_0]} W(x) [h(t) - g(t)] \quad \text{for } t \in [0, s].
\end{align*}
This, together with the definition of $T_0$, allows us to derive
\begin{align*}
\tilde{h}(t) - \tilde{g}(t)& \leq 2h_0 + T_0 \mu A (2h_0 + \varepsilon_0) + 2 \mu \rho B T_0\sup_{x \in [0, 2h_0 + \varepsilon_0]} W(x) (2h_0 + \varepsilon_0)\\
&\leq 2h_0 + T_0 \left[ \mu (2h_0 + \varepsilon_0) \left( A + 2 \rho B \sup_{x \in [0, 2h_0 + \varepsilon_0]} W(x) \right) \right]\\
&= 2h_0 + \frac{\varepsilon_0}{4} \quad \text{for } t \in [0, s].
\end{align*}
Combining this with \eqref{B}, \eqref{a4} and \eqref{a5}, we obtain
\begin{align*}
&\mu \int_{g(t)}^{h(t)} \int_{h(t)}^{+\infty} J_1(x-y) U_{g,h}(t,x)  \rd y \rd x + \mu \rho \int_{g(t)}^{h(t)} V_{g,h}(t,x) W(h(t)-x)  \rd x\\
&\geq \mu \int_{h(t)-\frac{\varepsilon_0}{2}}^{h(t)} \int_{h(t)}^{h(t)+\frac{\varepsilon_0}{2}} J_1(x-y) U_{g,h}(t,x) \rd y  \rd x + \mu \rho e^{-(d_2+b)T_0} \int_{-h_0}^{h_0} W(h(t)-x) v_0(x)  \rd x\\
&\geq \mu \int_{h_0-\frac{\varepsilon_0}{4}}^{h_0} \int_{h_0+\frac{\varepsilon_0}{4}}^{h_0+\frac{\varepsilon_0}{2}} J_1(x-y) U_{g,h}(t,x) \rd y  \rd x \\
&\quad + \mu \rho e^{-(d_2+b)T_0} \min_{|x| \leq h_0-\varepsilon_0} v_0(x) \int_{-h_0+\varepsilon_0}^{h_0-\varepsilon_0} W(h(t)-x)  \rd x\\
&\geq \mu \frac{\varepsilon_0}{4} \delta_0 e^{-(d_1+a)T_0} \int_{h_0-\frac{\varepsilon_0}{4}}^{h_0} u_0(x)\rd x + \mu \rho e^{-(d_2+b)T_0} \min_{|x| \leq h_0-\varepsilon_0} v_0(x) \int_{h(t)-h_0+\varepsilon_0}^{h(t)+h_0-\varepsilon_0} W(x)  \rd x\\
&\geq \mu \left[\frac{\varepsilon_0}{4} \delta_0 e^{-(d_1+a)T_0} \int_{h_0-\frac{\varepsilon_0}{4}}^{h_0} u_0(x) \rd x + \rho e^{-(d_2+b)T_0} \min_{|x| \leq h_0-\varepsilon_0} v_0(x) \int_{\frac{5}{4}\varepsilon_0}^{2h_0-\varepsilon_0} W(x) \rd x\right]\\
&=: \mu \beta_0 > 0 \quad \text{for } t \in [0,s].
\end{align*}
This combined with \eqref{a2} yields
\begin{align}\label{a7}
\tilde{h}'(t) \geq \mu \beta_0 \quad \text{for } t \in [0,s].
\end{align}
Similarly, we obtain
\begin{align}\label{a8}
\tilde{g}'(t) \leq -\mu \alpha_0 \quad \text{for } t \in [0,s]
\end{align}
with $\alpha_{0}$ defined by \eqref{m}. It follows from the above discussions that
\[
\mathcal{F}(\Sigma_{\varepsilon_0, s, \alpha_0, \beta_0}) \subseteq \Sigma_{\varepsilon_0, s, \alpha_0, \beta_0} \quad \text{for } s \in (0, T_0].
\]
\qed

\begin{lemma}\label{a9}
 There exists $T_* \in (0, T_0]$ such that $\mathcal{F}$ is a contraction mapping on $\Sigma_{\varepsilon_0, s, \alpha_0, \beta_0}$ for $s \in (0, T_*]$, where $\varepsilon_0, T_0, \alpha_0, \beta_0$ are defined in Lemma \ref{a3}.
\end{lemma}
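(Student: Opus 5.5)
Fix $s\in(0,T_0]$ and take two pairs $(g_i,h_i)\in\Sigma_{\varepsilon_0,s,\alpha_0,\beta_0}$, $i=1,2$. Write $(U_i,V_i)=(U_{g_i,h_i},V_{g_i,h_i})$, extended by zero outside $[g_i(t),h_i(t)]$ as in \eqref{H}, and $(\tilde g_i,\tilde h_i)=\mathcal F(g_i,h_i)$. The goal is
\[
\|\tilde h_1-\tilde h_2\|_{C([0,s])}+\|\tilde g_1-\tilde g_2\|_{C([0,s])}\le C s\, d\big((g_1,h_1),(g_2,h_2)\big),
\]
with $C$ independent of $s\in(0,T_0]$; choosing $T_*$ with $CT_*\le 1/2$ then gives the contraction. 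The argument has two steps: first a Lipschitz estimate for $(U,V)$ in terms of the boundaries, then an estimate of the difference of the integrands in \eqref{a2}.

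\textbf{Step 1: Lipschitz dependence of $(U,V)$ on $(g,h)$.} Set $\Omega_s=[-h_0-\varepsilon_0/4,\,h_0+\varepsilon_0/4]$, which contains all the intervals $[g_i(t),h_i(t)]$. I aim to show
\[
\|U_1-U_2\|_{L^\infty([0,s]\times\Omega_s)}+\|V_1-V_2\|_{L^\infty([0,s]\times\Omega_s)}\le C_1 d.
\]
Fix $(t,x)$ and split into cases.

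If $x$ lies in both intervals, let $t_x$ be the first time $x$ belongs to both, so $t_x=0$ when $|x|\le h_0$. Then $w=U_1-U_2$ and $z=V_1-V_2$ satisfy a linear system. The forcing terms come from the integrals over the symmetric difference of the domains, and these are bounded by $d_1\|J_1\|_\infty A\,(|h_1-h_2|+|g_1-g_2|)$ and by the analogous quantity with $J_2$ and $B$. The coupling $G(U_1)-G(U_2)$ is Lipschitz on $[0,A]$ by \textbf{(G1)} and the bound \eqref{a6}. At $t=t_x$, one of $U_i(t_x,x)$ vanishes, so the initial difference is controlled by $U_j(t_x,x)$. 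Integrating the $U_j$ equation from the time $x$ entered the domain of $U_j$, that quantity is at most $C|t^{j}_x-t_x|$. By the uniform monotonicity in $\Sigma$ ($h_i'\ge\mu\beta_0$, $g_i'\le-\mu\alpha_0$), $|t^j_x-t_x|\le \|h_1-h_2\|/(\mu\beta_0)$. A Gronwall argument in $t$ applied to $\sup_x(|w|+|z|)$ then gives the bound.

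If $x$ lies in only one interval, say that of $U_1$, the same entry-time argument bounds $U_1(t,x)$ by $C d$.

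\textbf{Step 2: Estimating the difference of the integrands.} For fixed $\tau$, write
\[
\int_{g_1}^{h_1}U_1W_{J_1}(h_1-x)\,\rd x-\int_{g_2}^{h_2}U_2W_{J_1}(h_2-x)\,\rd x
\]
as a sum of three contributions, namely (a) the integral of $(U_1-U_2)$ over $\Omega_s$, (b) the change of weight $W_{J_1}(h_1-x)-W_{J_1}(h_2-x)$, and (c) the domain mismatch. Since $|W_{J_1}'|\le\|J_1\|_\infty$, term (b) is at most $\|J_1\|_\infty A|\Omega_s|\,|h_1-h_2|$. The $V$-term with $W$ is handled the same way, using that $W$ is Lipschitz on $[0,2h_0+\varepsilon_0]$ by \textbf{(W)}, and $\tilde g$ is analogous. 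Each integrand difference is therefore at most $C_2 d$, and integrating over $[0,t]\subset[0,s]$ gives the claimed factor $s$. Finally, take $T_*\le T_0$ small so that $C_2T_*\le1/2$; together with Lemma \ref{a3} this shows $\mathcal F$ is a contraction on $\Sigma_{\varepsilon_0,s,\alpha_0,\beta_0}$.

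\textbf{Main obstacle.} The difficult part is Step 1 near the moving boundaries, where the two solutions live on different domains. The key is the entry-time bound: since $U_j$ vanishes on the boundary and $\partial_tU_j$ is bounded by a constant depending only on $A,B,d_i,a,b,e$, its value at a point is at most a constant times the time elapsed since the boundary passed that point. The lower speed bounds $\mu\alpha_0,\mu\beta_0$ built into $\Sigma$ convert that time gap into $\|h_1-h_2\|$ or $\|g_1-g_2\|$. This is exactly why $\Sigma$ imposes those lower bounds on the boundary speeds.
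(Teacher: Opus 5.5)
Your proposal is correct and follows essentially the same route as the paper. The paper explicitly derives the integrand estimate \eqref{B1}, which is your Step 2 with $\|U_1-U_2\|+\|V_1-V_2\|$ kept on the right-hand side, and then closes by appealing to the second step of the proof of Theorem 2.1 in \cite{fb3}; that step is precisely your entry-time/Gronwall Lipschitz bound for $(U,V)$, using the lower speed bounds $\mu\alpha_0,\mu\beta_0$ built into $\Sigma_{\varepsilon_0,s,\alpha_0,\beta_0}$.
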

\noindent{\bf{Proof.}} For $(g_i, h_i) \in \Sigma_{\varepsilon_0, s, \alpha_0, \beta_0}$, $i = 1, 2$, we define
\[
\Omega_s = D_{g_1, h_1}^s \cup D_{g_2, h_2}^s, \quad U_i = U_{g_i, h_i}, \quad V_i = V_{g_i, h_i}, \quad \mathcal{F}(g_i, h_i) = (\tilde{g}_i, \tilde{h}_i).
\]
We extend $U_i$ and $V_i$ by $0$ in $([0, s] \times \mathbb{R}) \setminus D_{g_i, h_i}^s$. By {\bf{(J)}}, {\bf{(W)}} and \eqref{a2}, we obtain
\begin{align*}
&\left|\tilde{h}_1(t) - \tilde{h}_2(t)\right|\\
&~~\leq \mu \int_0^t \left|\int_{g_1(\tau)}^{h_1(\tau)} \int_{h_1(\tau)}^{+\infty} J_1(x-y) U_1(\tau,x)  \rd y  \rd x + \rho \int_{g_1(\tau)}^{h_1(\tau)} V_1(\tau,x) W(h_1(\tau)-x)  \rd x \right. \\
&~~\left.\quad - \int_{g_2(\tau)}^{h_2(\tau)} \int_{h_2(\tau)}^{+\infty} J_1(x-y) U_2(\tau,x)  \rd y  \rd x  - \rho \int_{g_2(\tau)}^{h_2(\tau)} V_2(\tau,x) W(h_2(\tau)-x) \rd x  \right|\rd\tau \\
&~~\leq \mu \int_0^t\left| \int_{g_1(\tau)}^{h_1(\tau)}\int_{h_1(\tau)}^{+\infty} J_1(x-y) U_1(\tau,x)  \rd y  \rd x  -  \int_{g_2(\tau)}^{h_2(\tau)}\int_{h_2(\tau)}^{+\infty} J_1(x-y) U_2(\tau,x)  \rd y  \rd x \right| \rd \tau \\
&~~\quad + \mu\rho \int_0^t \left|\int_{g_1(\tau)}^{h_1(\tau)} V_1(\tau,x) W(\left|h_1(\tau)-x\right|)  \rd x - \int_{g_2(\tau)}^{h_2(\tau)} V_2(\tau,x) W(\left|h_2(\tau)-x\right|)  \rd x \right|\rd \tau \\
&~~\leq \mu \int_0^t \int_{g_1(\tau)}^{h_1(\tau)} \int_{h_1(\tau)}^{+\infty} J_1(x-y) \left|U_1(\tau,x) - U_2(\tau,x)\right|  \rd y  \rd x  \rd\tau \\
&~~\quad + \mu \int_0^t \left| \left(\int_{g_1(\tau)}^{g_2(\tau)} \int_{h_1(\tau)}^{+\infty} + \int_{h_2(\tau)}^{h_1(\tau)} \int_{h_1(\tau)}^{+\infty}+\int_{g_2(\tau)}^{h_2(\tau)}\int_{h_1(\tau)}^{h_2(\tau)}\right) J_1(x-y) U_2(\tau,x)  \rd y  \rd x \right| \rd \tau \\
&~~\quad + \mu \rho \int_0^t \int_{g_1(\tau)}^{h_1(\tau)} W(\left|h_1(\tau) - x\right|) |V_1(\tau,x) - V_2(\tau,x)| \rd x  \rd\tau \\
&~~\quad + \mu \rho \int_0^t \left| \int_{g_1(\tau)}^{g_2(\tau)} W(\left|h_1(\tau) - x\right|) V_2(\tau,x)  \rd x - \int_{h_1(\tau)}^{h_2(\tau)} W(\left|h_1(\tau) - x\right|) V_2(\tau,x)  \rd x \right|  \rd \tau \\
&~~\quad + \mu \rho \int_0^t \int_{g_2(\tau)}^{h_2(\tau)}\left| W(\left|h_1(\tau) - x\right|)- W(\left|h_2(\tau) - x\right|)\right|V_2(\tau,x)  \rd x  \rd \tau \\
&~~\leq 3h_0 \mu s \|U_1 - U_2\|_{C(\bar{\Omega}_s)} + \mu A s \|g_1 - g_2\|_{C([0,s])} + \mu A S \|h_1 - h_2\|_{C([0,s])} \\
&~~\quad + \mu  s 3h_0 A \|J_1\|_\infty \|h_1 - h_2\|_{C([0,s])}+ 3h_0 s\mu \rho \sup_{x \in [0,3h_0]}  W(x) \|V_1 - V_2\|_{C(\bar{\Omega}_S)} \\
&~~\quad  + s \mu \rho B \sup_{x \in [0,3h_0]} W(x) \|g_1 - g_2\|_{C([0,s])}+ s \mu \rho B \sup_{x \in [0,3h_0]} W(x) \|h_1 - h_2\|_{C([0,s])} \\
&~~\quad  + 3h_0 s \mu \rho B \tilde{L}\|h_1 - h_2\|_{C([0,s])}\\
&~~\leq C_1 s \left[ \|U_1 - U_2\|_{C(\bar{\Omega}_s)} + \|V_1 - V_2\|_{C(\bar{\Omega}_s)} + \|h_1 - h_2\|_{C([0,s])} + \|g_1 - g_2\|_{C([0,s])} \right],
\end{align*}
where $\tilde{L}=\tilde{L} (3h_0)$ is the Lipschitz constant of $W(x)$ on $[0, 3h_0]$ due to the condition {\bf{(W)}}, and $C_1>0$ is a constant depending on $(h_0, A, B, \mu, \rho, J_1, W)$.

Similarly, we have
\[
|\tilde{g}_1(t) - \tilde{g}_2(t)| \leq C_2 s \left[ \|U_1 - U_2\|_{C(\bar{\Omega}_s)} + \|V_1 - V_2\|_{C(\bar{\Omega}_s)} + \|h_1 - h_2\|_{C([0,s])} + \|g_1 - g_2\|_{C([0,s])} \right]
\]
for $t \in [0, s]$ with some $C_2>0$. Hence,
\begin{align}\label{B1}
&\|\tilde{h}_1 - \tilde{h}_2\|_{C([0,s])} + \|\tilde{g}_1 - \tilde{g}_2\|_{C([0,s])}\nonumber\\
&\leq (C_1 + C_2) s \left[ \|U_1 - U_2\|_{C(\bar{\Omega}_s)} + \|V_1 - V_2\|_{C(\bar{\Omega}_s)} + \|h_1 - h_2\|_{C([0,s])} + \|g_1 - g_2\|_{C([0,s])}
\right].
\end{align}

Similar to the second step of the proof of Theorem 2.1 in \cite{fb3} (with only a minor modification), by using \eqref{B1}, we further obtain
$$
\|\tilde{h}_1 - \tilde{h}_2\|_{C([0,s])} + \|\tilde{g}_1 - \tilde{g}_2\|_{C([0,s])} \leq \frac{1}{2} \left[ \|h_1 - h_2\|_{C([0,s])} + \|g_1 - g_2\|_{C([0,s])}\right]
$$
for all $0 < s \leq T_* \leq T_0$. This implies that $\mathcal{F}$ is a contraction mapping on $\Sigma_{\varepsilon_0, s, \alpha_0, \beta_0}$ for every $s \in (0, T_*]$. \qed

\begin{theorem}\label{B2}
Assume that {\bf{(J)}} and {\bf{(W)}} hold, and $G(u)$ satisfies {\bf{(G1)-(G2)}}. Then, for any given $h_0 > 0$ and $(u_0, v_0)$ satisfying \eqref{B1}, problem \eqref{A} has a unique global solution $(u(t,x), v(t,x), g(t), h(t))$. Moreover, for any $T > 0$, we have $(g, h) \in \mathbb{G}_T \times \mathbb{H}_T$ and $(u, v) \in \mathbb{X}_T$.
\end{theorem}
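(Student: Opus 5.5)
The plan has two stages: a local solution from the contraction argument, then a continuation argument driven by a priori bounds. I treat the condition ``\eqref{B1}'' in the statement as the intended \eqref{B}.

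\textbf{Local existence and uniqueness.} By Lemmas \ref{a3} and \ref{a9}, for $s\in(0,T_*]$ the map $\mathcal F$ sends the complete metric space $\Sigma_{\varepsilon_0,s,\alpha_0,\beta_0}$ into itself and is a contraction there. The Banach fixed point theorem then gives a unique fixed point $(g,h)$. Setting $(u,v)=(U_{g,h},V_{g,h})$ from Lemma \ref{cc}, the quadruple $(u,v,g,h)$ solves \eqref{A} on $[0,T_*]$.

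Uniqueness among \emph{all} solutions (not only those in $\Sigma$) needs a short extra step. Take any solution $(\hat u,\hat v,\hat g,\hat h)$ on $[0,T_*]$. By continuity and the positivity in Lemma \ref{vv}, it satisfies $\hat h'\ge\mu\beta_0$, $\hat g'\le-\mu\alpha_0$ and $\hat h-\hat g\le 2h_0+\varepsilon_0/4$ on some short interval $[0,s']$. There it lies in $\Sigma_{\varepsilon_0,s',\alpha_0,\beta_0}$, so it coincides with our solution. Repeating this argument in time extends the coincidence to the whole of $[0,T_*]$.

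\textbf{A priori bounds.} Let $T_{\max}$ be the supremum of times for which a solution exists; the goal is to show $T_{\max}=\infty$. By Lemma \ref{cc}, $0<u\le A$ and $0<v\le B$, with $A,B$ independent of time. Since $W\in L^\infty$ and $\int_{\mathbb R}J_1=1$, we get
\[
0<h'(t)\le \mu\bigl(A+\rho B\|W\|_\infty\bigr)\bigl(h(t)-g(t)\bigr),
\]
and the same bound holds for $-g'$. Gronwall's inequality then shows that $h-g$ grows at most exponentially, so $g$ and $h$ stay bounded on any finite interval $[0,T]$. Consequently $h'$ and $g'$ are bounded there, which gives uniform Lipschitz bounds.

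\textbf{Extension and the main obstacle.} Suppose $T_{\max}<\infty$. The key step is to restart the local argument at a time $t_0$ close to $T_{\max}$, taking $(u(t_0,\cdot),v(t_0,\cdot))$ on $[g(t_0),h(t_0)]$ as new initial data. That data is nonnegative, positive inside, vanishes at the endpoints, and the interval need not be symmetric; the proofs of Lemmas \ref{cc}--\ref{a9} carry over to this setting. The main obstacle is to make sure the local existence time obtained there does not shrink to zero as $t_0\to T_{\max}$.

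The delicate quantities are $\beta_0$ and $\alpha_0$. Inspecting \eqref{m}, they depend on integrals of the initial data near the endpoints, and these could degenerate. To handle this, I would first try to avoid needing a strict positive lower bound on $h'$ from the data at all. Monotonicity follows merely from $h'\ge0$ together with the strict positivity of $u$ (Lemma \ref{vv}) and $J_1(0)>0$, which makes $h$ strictly increasing. The contraction estimate \eqref{B1} uses only $A$, $B$, the Lipschitz constant of $W$ on bounded sets, and a bound on $h-g$, and all of these are controlled uniformly on $[0,T_{\max}]$. So I would run the fixed point argument in a set without the $\alpha,\beta$ constraints, replacing them by plain monotonicity plus the width bound $h-g\le h(t_0)-g(t_0)+\varepsilon$. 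This yields an existence time $s_0$ that depends only on these uniform bounds.

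Choosing $t_0>T_{\max}-s_0$ then extends the solution beyond $T_{\max}$, a contradiction. Hence the solution is global. Finally, $(g,h)\in\mathbb G_T\times\mathbb H_T$ and $(u,v)\in\mathbb X_T$ for every $T$ follow from the construction.
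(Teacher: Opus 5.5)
Your local existence and uniqueness step and your a priori bounds are fine, but the extension step has a genuine gap. Estimate \eqref{B1} is not a contraction estimate by itself. Its right-hand side contains $\|U_1-U_2\|_{C(\bar\Omega_s)}+\|V_1-V_2\|_{C(\bar\Omega_s)}$, and Lemma \ref{a9} closes only by importing, from the second step of \cite{fb3}, a bound of these norms by $C(\|h_1-h_2\|_{C([0,s])}+\|g_1-g_2\|_{C([0,s])})$. That bound is exactly where the $\alpha_0,\beta_0$ constraints in $\Sigma_{\varepsilon_0,s,\alpha_0,\beta_0}$ are used. If $h_2(t)<x\le h_1(t)$, then $U_2(t,x)=0$ and $U_1(t,x)\le C(t-t_{1,x})$, where $h_1(t_{1,x})=x$. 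It is the speed bound $h_1'\ge\mu\beta_0$ that gives $t-t_{1,x}\le\|h_1-h_2\|_{C([0,s])}/(\mu\beta_0)$. With mere monotonicity, $h_1$ can be nearly flat for a while. Then the times at which $h_1$ and $h_2$ cross a fixed $x$ can differ by an amount of order $s$ while $\|h_1-h_2\|$ is arbitrarily small, and $\|U_1-U_2\|_{C}$ is no longer controlled by the boundary distance. So on your enlarged set the argument of Lemma \ref{a9} does not yield a contraction. There is also a smaller issue: strictly increasing functions do not form a closed set under uniform convergence, and Lemma \ref{cc} is only stated for $(g,h)\in\mathbb{G}_T\times\mathbb{H}_T$.

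The degeneration you feared does not occur, which gives the simplest repair: keep $\alpha,\beta$. For $t_1<t_0<T_{\max}$ one has $u(t_0,x)\ge e^{-(d_1+a)(t_0-t_1)}u(t_1,x)$ on $[g(t_1),h(t_1)]$, and $h(t_0)\uparrow h(T_{\max})<\infty$. If $h(T_{\max})-h(t_1)<\varepsilon_0/8$, then $\int_{h(t_0)-\varepsilon_0/4}^{h(t_0)}u(t_0,x)\,\rd x\ge e^{-(d_1+a)T_{\max}}\int_{h(t_1)-\varepsilon_0/8}^{h(t_1)}u(t_1,x)\,\rd x>0$ for all $t_0\in[t_1,T_{\max})$, and the same holds at $g$. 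Now $\varepsilon_0$ can be fixed once, since the width never drops below $2h_0$. Also $A$, $B$, the width bound and the Lipschitz constant of $W$ are uniform. Hence the local existence time is uniform as well.

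The route the paper relies on (steps 3--4 of \cite{fb3}) avoids uniformity altogether. Since $g,h$ are monotone and bounded on $[0,T_{\max})$ and $u_t,v_t$ are bounded, the solution extends continuously to $t=T_{\max}$. The limit $(u,v)(T_{\max},\cdot)$ is positive in $(g(T_{\max}),h(T_{\max}))$ and vanishes at the endpoints, so the local result can be restarted at $T_{\max}$ itself.

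Your idea of discarding $\alpha,\beta$ could also be rescued, but only with a different estimate. Since $\mathcal F$ sees $(U,V)$ only through spatial integrals, it suffices to control $\|U_1-U_2\|_{L^1}+\|V_1-V_2\|_{L^1}$ by Gronwall. In that norm the mismatch region contributes at most $C(\|h_1-h_2\|+\|g_1-g_2\|)$, with no speed lower bound needed. That argument is not in your proposal and would have to be supplied.
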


\noindent{\bf{Proof.}} Thanks to the Contraction Mapping Theorem and Lemmas \ref{a3} and \ref{a9}, we know that $\mathcal{F}$ has a unique fixed point $(g_*, h_*)$ in $\Sigma_{\varepsilon_0, s, \alpha_0, \beta_0}$, for every $s \in (0, T_*]$, where $\varepsilon_0, \alpha_0, \beta_0, T_*$  are those given in Lemmas \ref{a3} and \ref{a9}. With this at hand, we can now follow arguments analogous to the third and fourth steps of the proof of Theorem 2.1 in \cite{fb3} to obtain the global existence and uniqueness of the solution $(u, v, g, h)$ to problem \eqref{A}. \qed

\subsection{Some comparison principles}

The following result is well known and follows from a simpler version of the argument used in the proof of Lemma~\ref{vv}.
\begin{lemma}\label{hh}
Suppose that {\bf{(J)}} holds, and $h_0,T>0$. Assume that $u(t, x), v(t, x),u_t(t, x)$ as well as $v_t(t, x)$ are continuous in $\bar{D}_{g_0,h_0}^T$ and satisfy
\begin{equation*}
\begin{cases}
u_t(t, x) \geq d_1 \displaystyle\int_{-h_0}^{h_0} J_1(x-y) u(t, y) \rd y-d_1 u+a_{11}u+a_{12} v, & 0<t\leq T,x \in(-h_0, h_0), \\
v_t(t, x) \geq d_2 \displaystyle\int_{-h_0}^{h_0} J_2(x-y) v(t, y) \rd y-d_2 v+a_{21}v+a_{22} u, & 0<t\leq T,x \in(-h_0, h_0), \\
u(0, x) \geq 0, ~ v(0, x) \geq 0, & |x| \leq h_0 ,
\end{cases}
\end{equation*}
where $a_{11},a_{12},a_{21},a_{22} \in L^\infty (D_{g_0,h_0}^T)$ with $a_{12}, a_{22}\geq 0$. Then $u(t, x)\geq 0,~ v(t, x)\geq 0$ for all $(t, x) \in \bar{D}_{g_0,h_0}^T$. Moreover, if $u(0, x)\not \equiv 0, v(0, x)\not \equiv 0$ for $|x| \leq h_0$, then $u(t, x)>0, v(t, x)>0$ in $(0,T]\times[-h_0,h_0]$.
\end{lemma}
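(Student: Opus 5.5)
The plan is a standard maximum-principle argument on the fixed cylinder $[0,T]\times[-h_0,h_0]$. It runs in three steps: exponential conjugation, a contradiction argument at the first negative time, and then strict positivity.

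\emph{Step 1: conjugation.} Choose a constant $K > \|a_{11}\|_\infty+\|a_{21}\|_\infty+\|a_{12}\|_\infty+\|a_{22}\|_\infty+d_1+d_2$. Set $\tilde u=e^{-Kt}u$ and $\tilde v=e^{-Kt}v$. Then
\[
\tilde u_t\ge d_1\int_{-h_0}^{h_0}J_1(x-y)\tilde u(t,y)\,\rd y-(d_1+K-a_{11})\tilde u+a_{12}\tilde v,
\]
and the analogous inequality holds for $\tilde v$, with $d_1+K-a_{11}>0$.

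\emph{Step 2: nonnegativity.} I first prove $\tilde u,\tilde v\ge 0$ on a short interval $[0,T_1]$, where $T_1$ depends only on $K$ and the $L^\infty$ bounds, and then iterate this finitely many times to reach $T$.

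Suppose, for contradiction, that $m:=\min\{\min_{[0,T_1]\times[-h_0,h_0]}\tilde u,\ \min\tilde v\}<0$. By symmetry, say the minimum is attained by $\tilde u$ at $(t_0,x_0)$, so $t_0>0$ because the initial data are nonnegative. Integrate the inequality for $\tilde u$ in time from $0$ to $t_0$ at $x_0$. Each nonlocal or coupling term is bounded below by (coefficient)$\cdot m$, because $J_1\ge0$, $a_{12}\ge0$ and every value is at least $m$. This gives
\[
m\ge \tilde u(0,x_0)-t_0\,(d_1+K+\|a_{11}\|_\infty+\|a_{12}\|_\infty)\,|m|.
\]
Hence
\[
|m|\le T_1 C|m|,
\]
which is a contradiction once $T_1C<1$.

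A cleaner route, and the one I would write out, is to apply the same comparison argument as in \cite{fb1} directly to the continuous functions. The only place where care is needed is handling the cross terms, and these are controlled exactly by the sign conditions $a_{12},a_{22}\ge0$.

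\emph{Step 3: strict positivity.} Now $u,v\ge0$. Write
\[
u_t\ge -(d_1+\|a_{11}\|_\infty)u+d_1\int_{-h_0}^{h_0}J_1(x-y)u(t,y)\,\rd y,
\]
using $a_{12}v\ge0$. This yields
\[
u(t,x)\ge e^{-Ct}u(0,x)+d_1\int_0^t e^{-C(t-s)}\int_{-h_0}^{h_0} J_1(x-y)u(s,y)\,\rd y\,\rd s.
\]
So $u$ stays positive wherever it starts positive. Because $J_1(0)>0$ and $J_1$ is continuous, $J_1\ge\delta$ on $[-\varepsilon,\varepsilon]$ for some $\delta,\varepsilon>0$. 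The integral term then makes $u(t,\cdot)>0$ on the $\varepsilon$-neighbourhood of the positivity set for every $t>0$. Since $u(0,\cdot)\not\equiv0$, the positivity set is open and nonempty, and iterating this spreading finitely many times over arbitrarily small time intervals covers all of $[-h_0,h_0]$.

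The same argument applied to $v$, using $v(0,\cdot)\not\equiv0$, gives $v>0$.

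I expect the main obstacle to be the coupling in Step 2. Since the minimum may switch between the two components, the argument must take the joint minimum of $\tilde u$ and $\tilde v$ to close the contradiction.
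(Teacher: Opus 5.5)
Your overall structure is the standard comparison argument that the paper invokes via Lemma~\ref{vv} and \cite{fb1}: exponential conjugation, a joint minimum of the two components so the cross terms are absorbed by $a_{12},a_{22}\ge 0$, then a Duhamel-type spreading of positivity using $J_1(0)>0$. Your Step 3 is correct. However, the explicit estimate in Step 2 does not follow, because your conjugation has the wrong sign for an integrated argument. With $\tilde u=e^{-Kt}u$ the zeroth-order coefficient is $-(d_1+K-a_{11})<0$. Knowing that every value is at least $m$ then gives only an \emph{upper} bound for $-\int_0^{t_0}(d_1+K-a_{11})\tilde u(s,x_0)\,\rd s$. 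A lower bound would require controlling $\max_{s\le t_0}\tilde u(s,x_0)$, which may be large, since $\tilde u(\cdot,x_0)$ travels from $\tilde u(0,x_0)\ge 0$ down to $m$. So $|m|\le T_1C|m|$ is not obtained.

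Either of two changes repairs this.

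(a) Conjugate with $e^{+Kt}$ instead. Then $\tilde u_t\ge d_1\int_{-h_0}^{h_0}J_1(x-y)\tilde u(t,y)\,\rd y+p_1\tilde u+a_{12}\tilde v$ with $p_1:=K-d_1+a_{11}>0$ for $K>d_1+\|a_{11}\|_\infty$, and likewise for $\tilde v$. Every coefficient on the right is now nonnegative, so $p_1\tilde u\ge\|p_1\|_\infty m$ whatever the sign of $\tilde u$. Integrating from $0$ to $t_0$ gives $m\ge t_0(d_1+\|p_1\|_\infty+\|a_{12}\|_\infty)m$, which is the contradiction you wanted for small $T_1$, and your time iteration then goes through.

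(b) Keep $e^{-Kt}$ but argue pointwise. At the minimum, $\tilde u_t(t_0,x_0)\le 0$ (one-sided if $t_0=T$). The right-hand side there is at least $d_1m-(d_1+K)m+\|a_{11}\|_\infty m+\|a_{12}\|_\infty m=(K-\|a_{11}\|_\infty-\|a_{12}\|_\infty)|m|>0$. This version needs no short-time restriction.

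In both versions the $a_{ij}$ are merely $L^\infty$, so first replace the right-hand side by a continuous lower bound valid a.e. In (a), for instance, use $d_1\int J_1\tilde u+\|p_1\|_\infty\min\{\tilde u,0\}+\|a_{12}\|_\infty\min\{\tilde v,0\}$. By continuity of $\tilde u_t$ this bound then holds everywhere. That justifies evaluating or integrating along the single line $x=x_0$, including $x_0=\pm h_0$.
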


\begin{lemma}\label{mm}
$(\textnormal{Comparison principle})$. Suppose that {\bf{(J)}} and {\bf{(W)}} hold, $G(u)$ satisfies {\bf(G1)-(G2)}, and $(u_0, v_0)$ satisfies \eqref{B}. For $T \in(0,+\infty)$, assume that $\bar{h},\bar{g} \in C([0, T])$ and $ \bar{u}, \bar{v},\bar{u}_t, \bar{v}_t \in C(\bar{D}_{\bar{g},\bar{h}}^T)$ satisfy
$$
\begin{cases}
\bar{u}_t \geq d_1 \displaystyle\int_{\bar{g}(t)}^{\bar{h}(t)} J_1(x-y) \bar{u}(t,y) \rd y-d_1 \bar{u}-a \bar{u}+e \bar{v}, & 0<t\leq T,x \in(\bar{g}(t), \bar{h}(t)), \\
\bar{v}_t \geq d_2 \displaystyle\int_{\bar{g}(t)}^{\bar{h}(t)} J_2(x-y) \bar{v}(t,y) \rd y-d_2 \bar{v}-b \bar{v}+G(\bar{u}), & 0<t\leq T,x \in(\bar{g}(t), \bar{h}(t)), \\
\bar{u}(t, x) \geq 0, ~\bar{v}(t, x) \geq 0, & 0<t\leq T, x\in \{\bar{g}(t),\bar{h}(t)\},\\
\bar{h}^{\prime}(t)\geq\mu\displaystyle\int_{\bar{g}(t)}^{\bar{h}(t)} \displaystyle\int_{\bar{h}(t)}^{\infty} J_{1}(x-y) \bar{u}(t,x)\rd y \rd x\\
\quad \quad \quad \quad+\mu \rho\displaystyle\int_{\bar{g}(t)}^{\bar{h}(t)} \bar{v}(t,x)W(\bar{h}(t)-x) \rd x, & 0<t\leq T, \\
\bar{g}^{\prime}(t)\leq-\mu\displaystyle\int_{\bar{g}(t)}^{\bar{h}(t)} \displaystyle\int_{-\infty}^{\bar{g}(t)} J_{1}(x-y) \bar{u}(t,x)\rd y \rd x\\
\quad \quad \quad \quad-\mu \rho\displaystyle\int_{\bar{g}(t)}^{\bar{h}(t)} \bar{v}(t,x)W(x-\bar{g}(t))\rd x, & 0<t\leq T, \\
\bar{u}(0,x)\geq u_{0}(x), \bar{v}(0,x)\geq v_{0}(x), \bar{h}(0)\geq h_{0}, \bar{g}(0)\leq -h_0, & |x| \leq h_{0},
\end{cases}
$$
then the unique positive solution $(u, v, g, h)$ of \eqref{A} satisfies
\begin{equation}\label{0p}
u(t,x) \leq \bar{u}(t,x), ~ v(t,x) \leq \bar{v}(t,x), ~g(t) \geq \bar{g}(t) ~and ~ h(t)\leq \bar{h}(t)
\end{equation}
for $t \in (0,T] ~and ~x \in [g(t),h(t)]$.
\end{lemma}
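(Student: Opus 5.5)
We follow the standard comparison argument for nonlocal free boundary problems (as in \cite{fb27,fb3}). The proof has two parts: first under strict inequalities, then a perturbation argument. Because the free boundary condition for $\bar h$ involves the weighted integral of $\bar v$ against $W(\bar h(t)-x)$, and this weight is not monotone in $\bar h$, we avoid a direct comparison of the boundary integrands. We work only up to the first time the boundaries touch.

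\textbf{Step 1 (strict ordering).} Assume first that $\bar h(0)>h_0$, $\bar g(0)<-h_0$. Then $h(t)<\bar h(t)$ and $g(t)>\bar g(t)$ for small $t>0$ by continuity. Suppose there is a first time $t_0\in(0,T]$ with $h(t_0)=\bar h(t_0)$ or $g(t_0)=\bar g(t_0)$; say $h(t_0)=\bar h(t_0)$.

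On $[0,t_0]$ we have $[g(t),h(t)]\subset[\bar g(t),\bar h(t)]$. Set $\phi=\bar u-u$ and $\psi=\bar v-v$ on $D^{t_0}_{g,h}$. Since $\bar u,\bar v\ge0$ outside $(g,h)$ but inside $(\bar g,\bar h)$, we get
\[
\phi_t\ge d_1\int_g^hJ_1\phi-(d_1+a)\phi+e\psi,\qquad
\psi_t\ge d_2\int_g^hJ_2\psi-(d_2+b)\psi+c(t,x)\phi,
\]
where $c=\int_0^1G'(u+\theta(\bar u-u))\,\rd\theta\ge0$ is bounded. The boundary values $\phi,\psi\ge0$ hold on $x=g(t),h(t)$ because $u=v=0$ there. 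Lemma \ref{vv} then gives $\phi,\psi\ge0$, and strict positivity in $D^{t_0}_{g,h}$ by its strong part, since $\bar u(0)-u_0$ need not be nontrivial; see the next paragraph.

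\textbf{Step 2 (contradiction at $t_0$).} At $t_0$ we have $h'(t_0)\ge\bar h'(t_0)$, since $\bar h-h>0$ before $t_0$ and $=0$ at $t_0$. On the other hand, with $h(t_0)=\bar h(t_0)$ the weights coincide:
\[
\bar h'(t_0)-h'(t_0)\ge\mu\int_{g(t_0)}^{h(t_0)}\Big[\phi\, W_{J_1}(h(t_0)-x)+\rho\psi W(h(t_0)-x)\Big]\rd x
\]
\[
+\mu\int_{\bar g(t_0)}^{g(t_0)}\Big[\bar u W_{J_1}+\rho\bar v W\Big]\rd x .
\]
To obtain strictness, we use the perturbation below so that $\phi,\psi>0$ in the interior; $W_{J_1}(x)>0$ near $x=0$ since $J_1(0)>0$. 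This makes the right-hand side positive, contradicting $\bar h'(t_0)\le h'(t_0)$. The case $g(t_0)=\bar g(t_0)$ is symmetric. Hence the strict ordering holds on $(0,T]$, and then Lemma \ref{vv} gives $u\le\bar u$ and $v\le\bar v$.

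\textbf{Step 3 (removing strictness).} For general data, compare the supersolution with the solution $(u_\epsilon,v_\epsilon,g_\epsilon,h_\epsilon)$ of \eqref{A} that has $h_0$ replaced by $h_0^\epsilon<h_0$ and initial data $(u_0^\epsilon,v_0^\epsilon)\le(u_0,v_0)$ with strict inequality inside. For instance, take suitably rescaled and slightly shrunk versions of $(u_0,v_0)$ satisfying \eqref{B} on $[-h_0^\epsilon,h_0^\epsilon]$. Now the initial boundary ordering is strict, and $\bar u(0)-u_0^\epsilon\not\equiv0$, which gives interior positivity of $\phi,\psi$ via Lemma \ref{vv}. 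Step 1 therefore applies. Finally, let $\epsilon\to0$ using continuous dependence of the unique solution of \eqref{A} on $(h_0,u_0,v_0)$. This follows from the contraction construction of Theorem \ref{B2}, as in \cite{fb3}.

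The main obstacle is this continuous dependence step together with the strictness at $t_0$. If continuous dependence is awkward to justify, the alternative is to perturb the supersolution instead: replace $\mu$ by $\mu(1+\epsilon)$ in the solution, or enlarge $(\bar u,\bar v,\bar h,-\bar g)$ slightly. Either way one must verify that the perturbed solution converges as $\epsilon\to0$.
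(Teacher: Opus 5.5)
Your proposal is correct and is essentially the paper's proof. Both compare $(\bar u,\bar v,\bar g,\bar h)$ with the solution of \eqref{A} having shrunken $h_0^\epsilon$ and strictly smaller initial data, argue at the first touching time using the strong part of Lemma \ref{vv}, and let $\epsilon\to0$ by continuous dependence. Note that Lemma \ref{vv} must first be applied to $(\bar u,\bar v)$ itself to get $\bar u,\bar v\ge0$ in $D^T_{\bar g,\bar h}$; you use this implicitly, while the paper states it. The only real difference is that the paper also replaces $\mu$ by $\mu(1-\epsilon)$ in the comparison solution. Your Step 2 shows this is unnecessary, because $\bar u-u_\epsilon>0$ in the interior and $W_{J_1}>0$ near $0$ already give the strict inequality.
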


\noindent{\bf{Proof.}} By {\bf(G1)}, there exists $\xi=\xi(t,x) \in(0, \bar{u}(t,x)]$ such that $G(\bar{u})=G^{\prime}(\xi) \bar{u}$. According to \eqref{B} and Lemma \ref{vv}, we have $\bar{u}, \bar{v}>0$ for $t \in (0,T]$ and $x \in (\bar{g}(t),\bar{h}(t))$. Hence, $\bar{h}$ and $-\bar{g}$ are strictly increasing.

For small $\epsilon>0$, let $\left(u_\epsilon, v_\epsilon, g_\epsilon, h_\epsilon\right)$ denote the unique solution of \eqref{A} with $\mu$ replaced by $\mu^\epsilon:=\mu(1-\epsilon)$, $h_0$ replaced by $h_0^\epsilon:=h_0(1-\epsilon)$, and $\left(u_0, v_0\right)$ replaced by $\left(u_0^\epsilon, v_0^\epsilon\right)$ satisfying
$$
0\leq u_0^\epsilon(x)<u_0(x),~ 0\leq v_0^\epsilon(x)<v_0(x)~ \text { in }~\left[-h_0^\epsilon, h_0^\epsilon\right],~u_0^\epsilon\left( \pm h_0^\epsilon\right)=v_0^\epsilon\left( \pm h_0^\epsilon\right)=0
$$
and
$$
\left(u_0^\epsilon\left(\frac{h_0}{h_0^\epsilon} x\right), v_0^\epsilon\left(\frac{h_0}{h_0^\epsilon} x\right)\right) \rightarrow\left(u_0(x), v_0(x)\right) ~ \text { as }~ \epsilon \rightarrow 0 ~\text{in}~ (C\left(\left[-h_0, h_0\right]\right))^2.
$$

Now, we claim that $\bar{g}(t)<g_\epsilon(t)<h_\epsilon(t)<\bar{h}(t)$ in $(0, T]$. Otherwise,  there exists $t_1\leq T$ such that
$$
\bar{g}(t)<g_\epsilon(t)<h_\epsilon(t)<\bar{h}(t) ~\text { for }~ t \in\left(0, t_1\right) \text { and }\left[h_\epsilon\left(t_1\right)-\bar{h}\left(t_1\right)\right]\left[g_\epsilon
\left(t_1\right)-\bar{g}\left(t_1\right)\right]=0,
$$
since $\bar{g}(0)<g_\epsilon(0)<h_\epsilon(0)<\bar{h}(0)$ and $h_\varepsilon, g_\varepsilon, \bar{h}, \bar{g} \in C([0,T])$.
Without loss of generality, we assume
$$
h_\epsilon\left(t_1\right)=\bar{h}\left(t_1\right)\text{ and } \bar{g}\left(t_1\right)\leq g_\epsilon\left(t_1\right).
$$
Therefore, we have $\bar{h}^{\prime}(t_1)\leq h_\varepsilon^{\prime}(t_1).$ Meanwhile, it follows from Lemma \ref{vv} that
$$
\bar{u}(t,x)-u_\varepsilon(t,x)>0,~\bar{v}(t,x)-v_\varepsilon(t,x)>0~\text{for}~t \in (0, t_1],x \in (g_\varepsilon(t), h_\varepsilon(t)).
$$
Therefore,
$$
\begin{aligned}
0 \geq  &\bar{h}^{\prime}\left(t_1\right)-h_\epsilon^{\prime}\left(t_1\right) \\
\geq  &\mu\displaystyle\int_{\bar{g}\left(t_1\right)}^{\bar{h}\left(t_1\right)} \displaystyle\int_{\bar{h}\left(t_1\right)}^{\infty} J_1(x-y) \bar{u}\left( t_1,x\right)\rd y\rd x-\mu_\varepsilon\displaystyle\int_{g_\varepsilon(t_1)}^{h_
\varepsilon(t_1)} \displaystyle\int_{h_\varepsilon(t_1)}^{\infty}J_1(x-y) u_\varepsilon\left( t_1,x\right)\rd y \rd x \\
&+\mu\rho\displaystyle\int_{\bar{g}\left(t_1\right)}^{\bar{h}\left(t_1\right)}W(\bar{h}
\left(t_1\right)-x)\bar{v}(t_1,x)\rd x-\mu_\varepsilon\rho\displaystyle\int_{g_\varepsilon
(t_1)}^{h_\varepsilon(t_1)}W(h_\varepsilon(t_1)-x)v_\varepsilon(t_1,x)\rd x\\
>&\mu_\varepsilon\displaystyle\int_{g_\varepsilon(t_1)}^{h_\varepsilon(t_1)} \displaystyle\int_{h_\varepsilon(t_1)}^{\infty}J_1(x-y)(\bar{u}\left( t_1,x\right)-u_\varepsilon\left( t_1,x\right))\rd y \rd x \\ &+\mu_\varepsilon\rho\displaystyle\int_{g_\varepsilon(t_1)}^{h_\varepsilon(t_1)}
W(h_\varepsilon(t_1)-x)(\bar{v}(t_1,x)-v_\varepsilon(t_1,x))\rd x>0,
\end{aligned}
$$
which clearly is a contradiction. Hence, we have
\[
h_\varepsilon(t)<\bar{h}(t)\mbox{ and }\bar{g}(t)<g_\varepsilon(t)\mbox{ for all }t \in (0,T].
\]
It then follows that $\bar{u}(t,x)>u_\varepsilon(t,x)$ and $\bar{v}(t,x)>v_\varepsilon(t,x)$ for all $t \in (0,T]$ and $x \in (g_\varepsilon(t),h_\varepsilon(t))$. Thanks to the continuous dependence of the unique solution of \eqref{A} on the parameters, we can obtain \eqref{0p} by letting $\epsilon \rightarrow 0$.\qed

\begin{remark}\label{909}
\textnormal{
We call \( (\bar{u}, \bar{v}, \bar{g}, \bar{h}) \) in Lemma~\ref{mm} an upper solution of problem~\eqref{A}.
By reversing all the inequalities in Lemma~\ref{mm}, one can similarly define a lower solution and obtain analogous results.
}
\end{remark}

Let \(\left(u^\mu, v^\mu, g^\mu, h^\mu\right)\) denote the solution of problem \eqref{A} to highlight its dependence on the parameter \(\mu\). The following result is a direct consequence of Lemma~\ref{mm}.

\begin{corollary}\label{nn}
Suppose that {\bf{(J)}} and {\bf{(W)}} hold, $G(u)$ satisfies {\bf(G1)-(G2)}. If $\mu_1 \leq \mu_2$, then $h^{\mu_1}(t) \leq h^{\mu_2}(t),~ g^{\mu_1}(t) \geq g^{\mu_2}(t),~u^{\mu_1}(t, x) \leq u^{\mu_2}(t, x)~and~v^{\mu_1}(t, x) \leq v^{\mu_2}(t, x)$ for $t>0$ and $x\in (g^{\mu_1}(t),h^{\mu_1}(t))$.
\end{corollary}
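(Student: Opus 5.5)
We derive Corollary \ref{nn} directly from the comparison principle, Lemma \ref{mm}. We use the solution for the larger parameter as an upper solution of the problem with the smaller parameter. Fix $\mu_1\le\mu_2$, any $T>0$, and set
\[
(\bar u,\bar v,\bar g,\bar h):=(u^{\mu_2},v^{\mu_2},g^{\mu_2},h^{\mu_2})
\]
restricted to $[0,T]$. By Theorem \ref{B2} this quadruple is well defined, with $(\bar g,\bar h)\in\mathbb G_T\times\mathbb H_T$, $(\bar u,\bar v)\in\mathbb X_T$, and $\bar u_t,\bar v_t$ continuous in $\bar D^T_{\bar g,\bar h}$ because of the equations. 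The two differential inequalities in Lemma \ref{mm} hold with equality. The boundary conditions $\bar u=\bar v=0$ at $x=\bar g(t),\bar h(t)$ and the initial conditions $\bar u(0,\cdot)=u_0$, $\bar v(0,\cdot)=v_0$, $\bar h(0)=h_0=-\bar g(0)$ also hold with equality.

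The only inequalities that need checking are those for the free boundaries, where Lemma \ref{mm} is applied to problem \eqref{A} with $\mu=\mu_1$. The flux term $\int_{\bar g}^{\bar h}\int_{\bar h}^\infty J_1(x-y)\bar u\,\rd y\,\rd x$ equals $\int_{\bar g}^{\bar h}\bar u\,W_{J_1}(\bar h-x)\,\rd x$, so we have
\[
\bar h'(t)=\mu_2\int_{\bar g(t)}^{\bar h(t)}\Big[\bar u\,W_{J_1}(\bar h(t)-x)+\rho\,\bar v\,W(\bar h(t)-x)\Big]\rd x .
\]
The integrand is nonnegative, because $\bar u,\bar v\ge0$ by Lemma \ref{cc}, $W\ge0$ by {\bf (W)}, and $W_{J_1}\ge0$. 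Since $\mu_2\ge\mu_1$, the right-hand side is therefore at least the same expression with $\mu_2$ replaced by $\mu_1$. The inequality for $\bar g'$ follows by the symmetric argument.

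Lemma \ref{mm} then yields $u^{\mu_1}\le u^{\mu_2}$, $v^{\mu_1}\le v^{\mu_2}$, $h^{\mu_1}\le h^{\mu_2}$ and $g^{\mu_1}\ge g^{\mu_2}$ on $(0,T]$ for $x\in[g^{\mu_1}(t),h^{\mu_1}(t)]$. Since $T>0$ is arbitrary, the corollary follows.

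We expect no real obstacle; the argument is only bookkeeping. The one point to be careful about is that Lemma \ref{mm} is stated for the fixed $\mu$ of problem \eqref{A}, so it must be invoked with $\mu=\mu_1$, and nonnegativity of the integrand is what turns $\mu_2\ge\mu_1$ into the required free boundary inequality.
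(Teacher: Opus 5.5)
Your proposal is correct and takes essentially the paper's approach: the paper simply notes that the corollary is a direct consequence of Lemma~\ref{mm}. You take $(u^{\mu_2},v^{\mu_2},g^{\mu_2},h^{\mu_2})$ as an upper solution of the $\mu_1$-problem, and the only nontrivial check, the free-boundary inequalities, follows from $\mu_2\ge\mu_1$ together with nonnegativity of the integrand, exactly as you argue.
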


Lemma \ref{mm} has several useful variations, all of which can be proved  by the same  technique. We present one such variation here for future reference.

\begin{lemma}\label{bb}
In Lemma \ref{mm}, assume that $\bar{g}(t) \equiv g(t)$  for $t \in[0, T]$ and $(\bar{u}, \bar{v}, g, \bar{h})$ satisfies
$$
\begin{cases}
\bar{u}_t \geq d_1 \displaystyle\int_{g(t)}^{\bar{h}(t)} J_1(x-y) \bar{u}(t,y) \rd y-d_1 \bar{u}-a \bar{u}+e \bar{v}, & 0<t\leq T,x \in(g(t), \bar{h}(t)), \\
\bar{v}_t \geq d_2 \displaystyle\int_{g(t)}^{\bar{h}(t)} J_2(x-y) \bar{v}(t,y) \rd y-d_2 \bar{v}-b \bar{v}+G(\bar{u}), & 0<t\leq T,x \in(g(t), \bar{h}(t)), \\
\bar{u}(t, x) \geq 0, ~\bar{v}(t, x) \geq 0, & 0<t\leq T, x\in \{g(t),\bar{h}(t)\},\\
\bar{h}^{\prime}(t)\geq\mu\displaystyle\int_{g(t)}^{\bar{h}(t)} \displaystyle\int_{\bar{h}(t)}^{\infty} J_{1}(x-y) \bar{u}(t,x)\rd y \rd x\\
\quad \quad \quad \quad+\mu \rho\displaystyle\int_{g(t)}^{\bar{h}(t)} \bar{v}(t,x)W(\bar{h}(t)-x) \rd x, & 0<t\leq T, \\
 \bar{u}(0,x)\geq u_{0}(x), \bar{v}(0,x)\geq v_{0}(x), \bar{h}(0)\geq h_{0}, & |x| \leq h_{0},
\end{cases}
$$
then
$$
u(t,x) \leq \bar{u}(t,x), ~ v(t,x) \leq \bar{v}(t,x), ~and ~ h(t)\leq \bar{h}(t)~for~ t \in (0,T],~x \in [g(t),h(t)].
$$

\end{lemma}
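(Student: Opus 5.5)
The plan is to copy the proof of Lemma~\ref{mm}, with two changes. The left boundary is now shared: $\bar g\equiv g$, and $g$ is the left free boundary of the true solution $(u,v,g,h)$. As a result, only the right boundary can produce a first contact, and the perturbation argument is applied to the solution itself rather than to an auxiliary one.

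\textbf{Step 1: strict positivity of the upper solution.} Lemma~\ref{vv} applies on the domain $\{g(t)<x<\bar h(t)\}$. Writing $G(\bar u)=G'(\xi)\bar u$, it gives $\bar u,\bar v>0$ there, so $\bar h'(t)>0$.

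\textbf{Step 2: a strictly smaller lower solution.} For small $\epsilon>0$, consider $(u,v,g,h)$ on $[0,T]$. I want a function $h_\epsilon$ with $h_\epsilon(0)<\bar h(0)$ and a pair $(u_\epsilon,v_\epsilon)$ on $\{g<x<h_\epsilon\}$ that are strictly below $(\bar u,\bar v)$.

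The cleanest route is to solve an auxiliary problem with the left boundary fixed as the given function $g(t)$. It consists of \eqref{a1} on $(g(t),h_\epsilon(t))$ together with the right free boundary law for $h_\epsilon$, using $\mu^\epsilon=\mu(1-\epsilon)$, $h_0^\epsilon=h_0(1-\epsilon)$ (shifted so that the left endpoint stays at $-h_0$), and initial data $u_0^\epsilon<u_0$, $v_0^\epsilon<v_0$. This is a one-free-boundary problem, and its well-posedness and continuous dependence on $\epsilon$ follow from the contraction argument of Lemmas~\ref{a3}--\ref{a9} with only the $h$-component varying.

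A comparison on the common domain then shows that the solution converges to $(u,v,h)$ as $\epsilon\to0$.

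\textbf{Step 3: first-contact argument.} Suppose $h_\epsilon(t)<\bar h(t)$ fails somewhere, and let $t_1$ be the first time with $h_\epsilon(t_1)=\bar h(t_1)$. Then $\bar h'(t_1)\le h_\epsilon'(t_1)$.

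On $(0,t_1]$ the difference $(\bar u-u_\epsilon,\bar v-v_\epsilon)$ satisfies a linear cooperative system on $\{g(t)<x<h_\epsilon(t)\}$:
\begin{itemize}
\item it is nonnegative on the boundary, since $\bar u,\bar v\ge0$ at $x=g(t)$ while $u_\epsilon=v_\epsilon=0$ there, and the same holds at $x=h_\epsilon(t)$;
\item its initial data are positive.
\end{itemize}
Lemma~\ref{vv} therefore gives strict positivity. Inserting this into the free boundary inequalities at $t_1$ yields
\[
\bar h'(t_1)-h'_\epsilon(t_1)>\mu_\epsilon\int_{g}^{h_\epsilon}\Big[\int_{h_\epsilon}^\infty J_1(x-y)(\bar u-u_\epsilon)\,\rd y+\rho W(h_\epsilon-x)(\bar v-v_\epsilon)\Big]\rd x\ge0 ,
\]
which contradicts $\bar h'(t_1)\le h_\epsilon'(t_1)$. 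Hence $h_\epsilon<\bar h$ on $(0,T]$, and consequently $u_\epsilon<\bar u$ and $v_\epsilon<\bar v$ on $\{g<x<h_\epsilon\}$. Letting $\epsilon\to0$ gives the claimed inequalities.

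\textbf{Main obstacle.} I expect the hardest part to be Step 2: justifying the auxiliary problem with prescribed left boundary $g$, including its continuous dependence on $\epsilon$. If that is awkward, I would instead perturb only the data, taking smaller $\mu$ and smaller $(u_0,v_0)$ in the full problem \eqref{A}. This produces a solution whose left boundary $g_\epsilon\ge g$. The comparison still works because the domain $\{g_\epsilon<x<h_\epsilon\}$ is contained in $\{g<x<\bar h\}$ up to the first contact on the right, and positivity of $\bar u,\bar v$ at $x=g_\epsilon$ holds by Step 1. Continuous dependence of \eqref{A} on parameters, already used in Lemma~\ref{mm}, then lets $\epsilon\to0$. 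I would adopt this second route.
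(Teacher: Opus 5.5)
Your adopted second route is correct and is essentially the ``same technique'' the paper cites from Lemma~\ref{mm}: perturb the data and parameters of \eqref{A}, use Lemma~\ref{mm} with $(u,v,g,h)$ as an upper solution of the perturbed problem to get $g_\epsilon\ge g$ (so contact can only occur on the right), run the first-contact argument there, and let $\epsilon\to0$. In that route, also keep the shrinking $h_0^\epsilon=h_0(1-\epsilon)$ among the perturbed data, so that $h_\epsilon(0)<\bar h(0)$ and the first contact time is positive.
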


\subsection{Principal eigenvalue}

For any finite interval $[L_1, L_2 ]$ and constants $d_1, d_2 > 0$, we consider the following eigenvalue problem
\begin{align}\label{c4}
\left\{
\begin{aligned}
&\frac{d_1}{e} \int_{L_1}^{L_2} J_1(x-y) \phi_1(y) \rd y - \frac{d_1}{e} \phi_1 - \frac{a}{e} \phi_1 + \phi_2 + \lambda \phi_1 = 0, & x \in [L_1, L_2], \\
&\frac{d_2}{G'(0)} \int_{L_1}^{L_2} J_2(x-y) \phi_2(y) \rd y -  \frac{d_2}{G'(0)}\phi_2 + \phi_1 - \frac{b}{G'(0)} \phi_2 + \lambda \phi_2 = 0, & x \in [L_1, L_2].
\end{aligned}
\right.
\end{align}
For convenience, we denote
{\small \begin{equation*}\begin{cases}
\mathbf{A}: = \begin{pmatrix}
    -\frac{a}{e} & 1 \\
    1 & -\frac{b}{G(0)}
\end{pmatrix}, \quad
\mathbf{D}: = \begin{pmatrix}
    \frac{d_1}{e} & 0 \\
    0 & \frac{d_2}{G'(0)}
\end{pmatrix}, \\
\textit{\textbf{C}}([L_1, L_2]): = C([L_1, L_2]) \times C([L_1, L_2]),\\
\textit{\textbf{E}}([L_1, L_2]): = L^2([L_1, L_2]) \times L^2([L_1, L_2]).
\end{cases}
\end{equation*}}
Obviously, the constant matrix $\mathbf{A}$ is symmetric, with off-diagonal entries positive. Note that $\textit{\textbf{E}}$ is a Hilbert space equipped with inner product
$$
\langle {\boldsymbol{\phi}}, {\boldsymbol{\psi}}\rangle = \int_{L_1}^{L_2} \phi_1(x) \psi_1(x) \rd x + \int_{L_1}^{L_2} \phi_2(x) \psi_2(x) \rd x \mbox{ for }{\boldsymbol{\phi}} = (\phi_1, \phi_2)^T, {\boldsymbol{\psi}} = (\psi_1, \psi_2)^T.
$$
  We also define $\textit{\textbf{N}}: \textit{\textbf{E}} \rightarrow \textit{\textbf{C}}$ by
\begin{equation*}
(\textit{\textbf{N}}{\boldsymbol{\phi}})(x) = \textnormal{diag }(N_1[\phi_1](x), N_2[\phi_2](x)),
\end{equation*}
where
\[
N_i[\phi_i](x) := \int_{L_1}^{L_2} J_i(x-y) \phi_i(y) \rd y, ~ i = 1, 2.
\]
Then, the eigenvalue problem \eqref{c4} can be rewritten as
\begin{align}\label{c5}
{\boldsymbol{\mathcal{K}}}{\boldsymbol{\phi}} + \lambda {\boldsymbol{\phi}} = \mathbf{0},
\end{align}
with ${\boldsymbol{\mathcal{K}}}: \textit{\textbf{E}} \rightarrow \textit{\textbf{E}}$ defined by ${\boldsymbol{\mathcal{K}}} = \mathbf{D}\textit{\textbf{N}} - \mathbf{D} + \mathbf{A}$.

Next, we define  $\lambda_p({\boldsymbol{\mathcal{K}}})$ and $\lambda_v({\boldsymbol{\mathcal{K}}})$ as follows:
\begin{align}\label{c6}
&\lambda_p({\boldsymbol{\mathcal{K}}}): = \sup \left\{ \lambda \in \mathbb{R} : \exists~ {\boldsymbol\phi} \in \textit{\textbf{C}}([L_1, L_2]), {\boldsymbol\phi} > \mathbf{0},~\text{s.t.}~ {\boldsymbol{\mathcal{K}}}[{\boldsymbol\phi}](x) + \lambda {\boldsymbol\phi}(x) \leq \mathbf{0} \text{ in } [L_1, L_2] \right\},\nonumber\\
&
\lambda_v({\boldsymbol{\mathcal{K}}}): = \inf_{{\boldsymbol\phi} \in \textit{\textbf{E}}([L_1, L_2]), {\boldsymbol\phi} \not\equiv \mathbf{0}}- \frac{\langle{\boldsymbol{\mathcal{K}}}[{\boldsymbol\phi}], {\boldsymbol\phi} \rangle}{\|{\boldsymbol\phi}\|_{\textit{\textbf{E}}([L_1, L_2])}^2}.
\end{align}
Here ${\boldsymbol\phi}=(\phi_1,\phi_2)^T > \mathbf{0}$ in $[L_1, L_2]$ means $\phi_1(x)>0,\ \phi_2(x)>0$ in $[L_1, L_2]$.

The following result follows directly from \cite[Theorem 1]{fb8} and \cite{fb7}.

\begin{proposition}\label{c7}
\textnormal{(\cite[Theorem 1]{fb8})}. Suppose that the kernel $J_i ~(i=1,2)$ satisfies {\bf{(J)}}. Then ${\boldsymbol{\mathcal{K}}}$ is self-adjoint and
\begin{align}\label{c8}
\lambda_p({\boldsymbol{\mathcal{K}}}) = -\sup_{\|{\boldsymbol\phi}\|_{\textit{\textbf{E}}}=1} \langle {\boldsymbol{\mathcal{K}}}{\boldsymbol\phi}, {\boldsymbol\phi} \rangle = \lambda_v({\boldsymbol{\mathcal{K}}}).
\end{align}
\end{proposition}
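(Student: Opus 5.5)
The operator $\boldsymbol{\mathcal{K}}=\mathbf{D}\boldsymbol{N}-\mathbf{D}+\mathbf{A}$ splits into a bounded multiplication part and an integral part, and I will treat the two pieces separately. The multiplication part is $\mathbf{A}-\mathbf{D}$, a constant symmetric matrix, so it is self-adjoint on $\boldsymbol{E}$. For the integral part $\mathbf{D}\boldsymbol{N}$, the matrix $\mathbf{D}$ is diagonal with positive entries, and each $N_i$ is an integral operator on $L^2([L_1,L_2])$ whose kernel $J_i(x-y)$ is continuous and bounded. By {\bf (J)} we have $J_i(x-y)=J_i(y-x)$, so Fubini gives
\[
\langle N_i\phi,\psi\rangle=\langle \phi,N_i\psi\rangle .
\]
Hence $\boldsymbol{\mathcal{K}}$ is a bounded self-adjoint operator on $\boldsymbol{E}$. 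Since $\|\boldsymbol\phi\|_{\boldsymbol{E}}=1$ normalizes the Rayleigh quotient, $\lambda_v(\boldsymbol{\mathcal{K}})=-\sup_{\|\boldsymbol\phi\|=1}\langle\boldsymbol{\mathcal{K}}\boldsymbol\phi,\boldsymbol\phi\rangle$ holds by definition. What remains is to prove $\lambda_p=\lambda_v$.

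For $\lambda_p\le\lambda_v$, suppose $\boldsymbol\phi>\mathbf 0$ is continuous and satisfies $\boldsymbol{\mathcal{K}}\boldsymbol\phi+\lambda\boldsymbol\phi\le\mathbf0$. I want to show $-\langle\boldsymbol{\mathcal{K}}\boldsymbol\psi,\boldsymbol\psi\rangle\ge\lambda\|\boldsymbol\psi\|^2$ for every $\boldsymbol\psi\in\boldsymbol{E}$. The tool is the Picone-type ground state substitution $\psi_i=\phi_i w_i$. Because all off-diagonal entries of $\mathbf{D}\boldsymbol N+\mathbf A$ are nonnegative and the whole operator is symmetric, one gets the identity
\[
-\langle\boldsymbol{\mathcal{K}}\boldsymbol\psi,\boldsymbol\psi\rangle=\langle -\boldsymbol{\mathcal K}\boldsymbol\phi,\ (\phi_1w_1^2,\phi_2w_2^2)\rangle+\tfrac12\sum_i\frac{d_i}{c_i}\iint J_i(x-y)\phi_i(x)\phi_i(y)(w_i(x)-w_i(y))^2+\int\phi_1\phi_2(w_1-w_2)^2,
\]
where $c_1=e$ and $c_2=G'(0)$. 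The last two terms are nonnegative, and the first term is at least $\lambda\|\boldsymbol\psi\|^2$ by the assumed inequality. Taking the supremum over admissible $\lambda$ then gives $\lambda_p\le\lambda_v$. This step is routine, since $\phi_i$ are bounded below on the compact interval.

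For $\lambda_p\ge\lambda_v$, the main obstacle is that a minimizer of the Rayleigh quotient need not exist. The essential spectrum of $\boldsymbol{\mathcal{K}}$ is that of the matrix multiplication operator $\mathbf A-\mathbf D$, because $\boldsymbol N$ is compact (Hilbert–Schmidt). So the supremum of the spectrum, $-\lambda_v$, may coincide with the top of the essential spectrum, in which case there is no continuous positive eigenfunction.

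I will handle this by perturbation, following \cite{fb8}. Take a continuous nonnegative function $\chi$ on $[L_1,L_2]$ and add $\epsilon\chi$ to the diagonal, so that the perturbed top eigenvalue lies strictly above $\sup\sigma_{ess}$. This is possible since the essential spectrum does not change under compact perturbations while the Rayleigh supremum strictly increases. For the perturbed problem, an isolated top eigenvalue exists, and its eigenfunction can be taken with $|\boldsymbol\phi|$. The eigenfunction is continuous, being expressed through the resolvent of the matrix multiplication part applied to a continuous integral term. Using $J_i(0)>0$ and the coupling $1$ in $\mathbf A$, the eigenfunction is strictly positive.

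Plugging this eigenfunction back into the unperturbed operator gives $\boldsymbol{\mathcal K}\boldsymbol\phi+(\lambda_v-\delta(\epsilon))\boldsymbol\phi\le\mathbf0$ with $\delta(\epsilon)\to0$. It follows that $\lambda_p\ge\lambda_v-\delta(\epsilon)$ for every $\epsilon$, and letting $\epsilon\to0$ gives $\lambda_p\ge\lambda_v$, completing the proof. Alternatively, one can cite \cite[Theorem 1]{fb8} directly once the self-adjointness and the cooperative structure have been verified.
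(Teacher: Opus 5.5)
Your self-adjointness check and the Picone-type identity for $\lambda_p\le\lambda_v$ are correct; the identity checks out after symmetrizing $\iint J_i(x-y)\phi_i(x)\phi_i(y)[w_i(x)^2-w_i(x)w_i(y)]$. This already goes beyond the paper, which gives no argument and simply quotes \cite[Theorem 1]{fb8}.

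The gap is in the reverse inequality. Taking the perturbation to be $\epsilon\chi I$, you add a multiplication operator, which is not compact, so the essential spectrum moves with it. By Weyl's theorem, $\sigma_{\mathrm{ess}}(\boldsymbol{\mathcal K}+\epsilon\chi I)$ is the set of eigenvalues of the matrices $\mathbf A-\mathbf D+\epsilon\chi(x)I$, $x\in[L_1,L_2]$. Its top is $\mu_++\epsilon\max\chi$, where $\mu_+$ is the largest eigenvalue of the constant matrix $\mathbf A-\mathbf D$. The Rayleigh supremum, however, rises by at most $\epsilon\max\chi$. So in exactly the scenario you worry about (unperturbed supremum equal to $\mu_+$), the perturbed supremum is squeezed to equal the perturbed top of the essential spectrum again. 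No isolated eigenvalue is produced, and the existence of a positive eigenfunction, which is the only nontrivial point, remains unproved. The perturbation arguments in the literature reshape a variable coefficient so that a non-integrability criterion holds; they are not compact-perturbation arguments.

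The remedy is that the obstacle never arises here, because the local part is a constant matrix. Take a Perron eigenvector $(m,n)$ with $m,n>0$ of $\mathbf A-\mathbf D$, and test with the constant pair $\boldsymbol\psi\equiv(m,n)$:
\[
\langle\boldsymbol{\mathcal K}\boldsymbol\psi,\boldsymbol\psi\rangle=\mu_+\|\boldsymbol\psi\|_{\textit{\textbf{E}}}^2+\frac{d_1m^2}{e}\int_{L_1}^{L_2}\!\!\int_{L_1}^{L_2}J_1(x-y)\,\rd x\,\rd y+\frac{d_2n^2}{G'(0)}\int_{L_1}^{L_2}\!\!\int_{L_1}^{L_2}J_2(x-y)\,\rd x\,\rd y>\mu_+\|\boldsymbol\psi\|_{\textit{\textbf{E}}}^2,
\]
since $J_i(0)>0$. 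Hence $-\lambda_v>\mu_+=\sup\sigma_{\mathrm{ess}}(\boldsymbol{\mathcal K})$, and $-\lambda_v$ is an isolated eigenvalue of finite multiplicity of $\boldsymbol{\mathcal K}$ itself.

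Your remaining steps then work with no perturbation at all:
\begin{itemize}
\item $|\boldsymbol\phi|$ is again a maximizer by cooperativity, hence an eigenfunction.
\item Write $(-\lambda_v I-\mathbf A+\mathbf D)\boldsymbol\phi=\mathbf D\textit{\textbf{N}}\boldsymbol\phi$. Because $-\lambda_v>\mu_+$, the matrix on the left is invertible with entrywise positive inverse, which gives continuity.
\item This representation, together with $J_i(0)>0$ and the connectedness of $[L_1,L_2]$, gives strict positivity of both components.
\end{itemize}
So $\lambda_v$ is admissible in the definition of $\lambda_p$, and $\lambda_p\ge\lambda_v$.
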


 The number $\lambda_p=\lambda_p({\boldsymbol{\mathcal{K}}})$ is usually called the principal eigenvalue of the operator ${\boldsymbol{\mathcal{K}}}$, or the principal eigenvalue of \eqref{c4}. We will also denote it by $\lambda_p(L_1, L_2)$ when its dependence on $[L_1, L_2]$ is stressed.
By Proposition \ref{c7} we have
{\small \begin{equation}\label{eigen-define}
\begin{aligned}
\lambda_p= &-\sup_{\|{\boldsymbol\phi}\|_{\textit{\textbf{E}}}=1} \langle {\boldsymbol{\mathcal{K}}}{\boldsymbol\phi}, {\boldsymbol\phi} \rangle\\
=& \inf_{\|{\boldsymbol\phi}\|_{\textit{\textbf{E}}}=1} \left\{ \frac{d_1}{2e} \int_{L_1}^{L_2} \int_{L_1}^{L_2} J_1(x-y) (\phi_1(x) - \phi_1(y))^2 \rd x \rd y \right.\\
&\left. +\frac{d_2}{2G'(0)} \int_{L_1}^{L_2} \int_{L_1}^{L_2} J_2(x-y) (\phi_2(x) - \phi_2(y))^2 \rd x \rd y \right.\\
&- \int_{L_1}^{L_2} \left[ \left( -\frac{a}{e} - \frac{d_1}{e} + \frac{d_1}{e} \int_{L_1}^{L_2} J_1(x-y) \rd y \right) \phi_1^2(x) +2\phi_1(x)\phi_2(x) \right. \\
&\left. \left. + \left( -\frac{b}{G'(0)} - \frac{d_2}{G'(0)} + \frac{d_2}{G'(0)} \int_{L_1}^{L_2} J_2(x-y) \rd y \right) \phi_2^2(x) \right] \rd x \right\}.
\end{aligned}
\end{equation}}

By \cite{fb4} and \cite{fb1}, we can easily deduce the following conclusions.

\begin{proposition}\label{c3}
Suppose that the kernel $J_i ~(i=1,2)$ satisfies {\bf{(J)}}, and $-\infty < L_1 < L_2 < +\infty$. Let $\lambda_p = \lambda_p(L_1, L_2)$ be the principal eigenvalue of \eqref{c4}. Then the following hold true:
\begin{itemize}
    \item[(i)] $\lambda_p(L_1, L_2)$ is strictly decreasing and continuous in $L:=L_2 - L_1$.
    \item[(ii)] If $\mathcal{R}_0 \leq 1$, then $\lambda_p(L_1, L_2) > 0$ for any $L_1<L_2$.
    \item[(iii)] If $\mathcal{R}_0 \geq\left(1+\frac{d_1}{a}\right)\left(1+\frac{d_2}{b}\right)$, then $\lambda_p(L_1, L_2) < 0$ for any $L_1<L_2$.
    \item[(iv)] If $1<\mathcal{R}_0 <\left(1+\frac{d_1}{a}\right)\left(1+\frac{d_2}{b}\right) $, then there exists $L^*>0$ such that
    \[
    \lambda_p(L_1, L_2) = 0 \text{ if } L_2 - L_1 = 2L^*,
    \]
    \[
    \lambda_p(L_1, L_2) > 0 \text{ if } L_2 - L_1 < 2L^*,
    \]
    \[
    \lambda_p(L_1, L_2) < 0 \text{ if } L_2 - L_1 > 2L^*.
    \]
\end{itemize}
\end{proposition}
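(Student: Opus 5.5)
The plan is to work entirely with the variational characterization \eqref{eigen-define} from Proposition \ref{c7}, viewing $\lambda_p(L_1,L_2)$ as the bottom of the spectrum of the self-adjoint operator $-\boldsymbol{\mathcal K}=(\mathbf D-\mathbf A)-\mathbf D\textit{\textbf{N}}$ on $\textit{\textbf{E}}([L_1,L_2])$.

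\textbf{Dependence on $L$ only.} Since the kernels are convolution kernels, the translation $x\mapsto x-L_1$ carries the problem on $[L_1,L_2]$ to the problem on $[0,L]$ with $L=L_2-L_1$. Hence $\lambda_p$ depends only on $L$, and I write $\lambda_p(L)$.

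\textbf{Monotonicity.} Let $L<L'$. Extending any admissible $\boldsymbol\phi$ on $[0,L]$ by zero to $[0,L']$ preserves $\|\boldsymbol\phi\|_{\textit{\textbf{E}}}$, and $\langle\boldsymbol{\mathcal K}\boldsymbol\phi,\boldsymbol\phi\rangle$ computed on $[0,L']$ equals its value on $[0,L]$. Therefore $\lambda_p(L')\le\lambda_p(L)$.

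For strictness I use the positive principal eigenfunction. By \cite[Theorem 1]{fb8} and \cite{fb7}, this eigenfunction exists whenever $\lambda_p$ lies below the essential spectrum. Otherwise I would use the $\lambda_p$-definition in \eqref{c6} with a positive continuous test function. Suppose $\boldsymbol\phi>\mathbf 0$ satisfies $\boldsymbol{\mathcal K}_{L'}\boldsymbol\phi+\lambda_p(L')\boldsymbol\phi=\mathbf 0$ on $[0,L']$. Restricting $\boldsymbol\phi$ to $[0,L]$ loses the positive contribution $\mathbf D\int_L^{L'}J_i(x-y)\phi_i(y)\,\rd y$. Near $x=L$ this contribution is strictly positive because $J_i(0)>0$. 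I will then show that $\lambda_p(L')+\epsilon$ is admissible in \eqref{c6} for $[0,L]$ for some small $\epsilon>0$, which gives $\lambda_p(L)>\lambda_p(L')$. This step needs care, and I would handle it by perturbing the test function.

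\textbf{Continuity.} I rescale $[0,L]$ to $[0,1]$ via $x=Ls$. The quadratic form then depends continuously on $L$, uniformly on the unit sphere, because $J_i$ is bounded and uniformly continuous on compacts. Taking infima gives continuity of $\lambda_p$ in $L$.

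\textbf{Limits as $L\to0$ and $L\to\infty$.} This is the key step. Write $\Lambda_0:=\lambda_{\min}(\mathbf D-\mathbf A)$ and $\Lambda_\infty:=\lambda_{\min}(-\mathbf A)$.

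For the lower bounds, note that $\|N_i\|_{L^2\to L^2}\le\min\{1,\|J_i\|_\infty L\}$ by Young's inequality and the Hilbert–Schmidt bound. This gives
\[
\Lambda_0-\|\mathbf D\|\,\|J\|_\infty L\;\le\;\lambda_p(L).
\]
Using $\langle\mathbf D\textit{\textbf{N}}\boldsymbol\phi,\boldsymbol\phi\rangle\le\langle\mathbf D\boldsymbol\phi,\boldsymbol\phi\rangle$ also gives $\lambda_p(L)\ge\Lambda_\infty$ for every $L$.

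For the upper bounds, I take the constant test function $\boldsymbol\xi/\sqrt L$, where $\boldsymbol\xi$ is a unit eigenvector of the relevant matrix. This yields
\[
\lambda_p(L)\le\Lambda_0-\tfrac1L\Big\langle\mathbf D\boldsymbol\xi,\boldsymbol\xi\Big\rangle\int_0^L\!\!\int_0^LJ(x-y)<\Lambda_0 .
\]
With the eigenvector of $-\mathbf A$ instead, it yields
\[
\lambda_p(L)\le\Lambda_\infty+\max_i\frac{d_i}{\cdot}\Big(1-\tfrac1L\int_0^L\!\!\int_0^LJ_i(x-y)\,\rd y\,\rd x\Big)\to\Lambda_\infty .
\]
Consequently $\lambda_p(L)\to\Lambda_0$ as $L\to0$ and $\lambda_p(L)\to\Lambda_\infty$ as $L\to\infty$, and by strict monotonicity $\Lambda_\infty<\lambda_p(L)<\Lambda_0$ for every $L$.

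\textbf{Signs of $\Lambda_0$ and $\Lambda_\infty$.} The matrices are $2\times2$ and symmetric, so their signs are read off from the determinant. We have $\Lambda_\infty>0$ iff $\frac{ab}{eG'(0)}>1$, i.e.\ iff $\mathcal R_0<1$, and $\Lambda_\infty=0$ iff $\mathcal R_0=1$. Similarly, $\Lambda_0>0$ iff $(a+d_1)(b+d_2)>eG'(0)$, i.e.\ iff $\mathcal R_0<(1+\frac{d_1}a)(1+\frac{d_2}b)$.

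\textbf{Conclusions.}
(ii) If $\mathcal R_0\le1$, then $\lambda_p(L)>\Lambda_\infty\ge0$.
(iii) If $\mathcal R_0\ge(1+\frac{d_1}{a})(1+\frac{d_2}{b})$, then $\lambda_p(L)<\Lambda_0\le0$.
(iv) In the intermediate range $\Lambda_\infty<0<\Lambda_0$. By continuity and strict monotonicity, $\lambda_p$ has exactly one zero, which I denote $2L^*$, and the sign pattern in (iv) follows.

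The main obstacle I expect is the strict monotonicity when a principal eigenfunction might not exist. There I would use the characterization \eqref{c6} together with the positivity $J_i(0)>0$.
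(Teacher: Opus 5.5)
The paper gives no proof of its own here; it defers to \cite{fb4} and \cite{fb1}. Your argument is a self-contained substitute, and its skeleton is sound. The following steps are all correct:
translation invariance; $\lambda_p(L')\le\lambda_p(L)$ by zero extension in the Rayleigh quotient; continuity by rescaling to $[0,1]$; the bounds $\Lambda_0-\max_i D_{ii}\|J_i\|_\infty L\le\lambda_p(L)$ and $\Lambda_\infty\le\lambda_p(L)$; the constant test functions giving $\lambda_p(L)<\Lambda_0$ and $\lambda_p(L)\to\Lambda_\infty$; and the determinant computations for the signs of $\Lambda_0=\lambda_{\min}(\mathbf D-\mathbf A)$ and $\Lambda_\infty=\lambda_{\min}(-\mathbf A)$.
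With two kernels, the correction terms should read $\frac1L\sum_i D_{ii}\xi_i^2\int_0^L\int_0^L J_i(x-y)\,\rd y\,\rd x$ and $\sum_i D_{ii}\xi_i^2\big(1-\frac1L\int_0^L\int_0^LJ_i(x-y)\,\rd y\,\rd x\big)$, with $D_{11}=d_1/e$ and $D_{22}=d_2/G'(0)$; this replaces your placeholder $\max_i d_i/\cdot$. What your route buys over the citation is an explicit identification of $\lim_{L\to0}\lambda_p$ and $\lim_{L\to\infty}\lambda_p$, which makes the two thresholds in (ii)--(iv) transparent.

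The step you leave unexecuted is strict monotonicity, and it cannot be skipped. It is what gives $\lambda_p>0$ at $\mathcal R_0=1$ in (ii), and it is what gives uniqueness of $L^*$ in (iv).

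Your worry that a principal eigenfunction might not exist is removed by your own estimate. Since $\mathbf D\textit{\textbf{N}}$ is Hilbert--Schmidt, Weyl's theorem gives $\inf\sigma_{\rm ess}(-\boldsymbol{\mathcal K})=\Lambda_0$. Then $\lambda_p(L)<\Lambda_0$ makes $\lambda_p(L)$ an isolated eigenvalue. A minimizer may be replaced by its absolute value. Because $(\mathbf D-\mathbf A-\lambda_p(L))^{-1}$ has positive entries and $J_i(0)>0$, the eigenfunction is continuous and positive on $[0,L]$ for every $L$. So the ``otherwise'' branch never occurs.

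The restriction-plus-perturbation plan is then unnecessary. It is also awkward, because the lost term $\mathbf D\int_L^{L'}J_i(x-y)\phi_i(y)\,\rd y$ vanishes away from $x=L$ when $J_i$ has compact support. Instead, let $\psi>0$ be the eigenfunction on $[0,L]$. Restricting $\phi$ gives $\boldsymbol{\mathcal K}_L\phi+\lambda_p(L')\phi=-\mathbf r$ with $\mathbf r\ge\mathbf 0$ and $\mathbf r\not\equiv\mathbf 0$ (it is positive near $x=L$). Pair this identity against $\psi$. Self-adjointness gives $(\lambda_p(L')-\lambda_p(L))\langle\phi,\psi\rangle=-\langle\mathbf r,\psi\rangle<0$, hence $\lambda_p(L')<\lambda_p(L)$.

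Alternatively, suppose $\lambda_p(L')=\lambda_p(L)$. Then the zero extension of $\psi$ attains the infimum on $[0,L']$, so it is an eigenfunction there. The first equation at $x\in(L,L+\epsilon)$ then forces $\frac{d_1}{e}\int_0^LJ_1(x-y)\psi_1(y)\,\rd y=0$, which contradicts $J_1(0)>0$.
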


Next, we investigate the effect of the dispersal rates $(d_1, d_2)$ on the principal eigenvalue of \eqref{c4}, and to stress the dependence on $(d_1, d_2)$, we write $\lambda_p = \lambda_p(d_1, d_2)$. We have the following result.
\begin{proposition}\label{c9}
Suppose that the kernel $J_i ~(i=1,2)$ satisfies {\bf{(J)}}, and  $0<L_1 < L_2 < +\infty$. Let $\lambda_p = \lambda_p(d_1, d_2)$ be the principal eigenvalue of $\eqref{c4}$. Then the following statements are true:
\begin{itemize}
    \item[(i)] $\lambda_p(d_1, d_2)$ is a continuous and strictly increasing function in $(d_1, d_2)$ in the sense that $\lambda_p(d_1, d_2) > \lambda_p(d_1', d_2')$ if $d_i > d_i', i=1,2$.
    \item[(ii)] $\lim\limits_{(d_1, d_2) \to (0,0)} \lambda_p(d_1, d_2) = \displaystyle\frac{1}{2} \left( \frac{a}{e} + \frac{b}{G'(0)} - \sqrt{\left( \frac{a}{e} - \frac{b}{G'(0)} \right)^2 + 4} \right)$.
    \item[(iii)] $\lim\limits_{(d_1, d_2) \to (\infty,\infty)} \lambda_p(d_1, d_2) = +\infty$.

\end{itemize}
\end{proposition}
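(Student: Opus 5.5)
The approach is to work throughout with the variational characterization \eqref{eigen-define} from Proposition \ref{c7}. For fixed $\boldsymbol\phi$ with $\|\boldsymbol\phi\|_{\textit{\textbf{E}}}=1$, the functional inside the infimum is affine in $(d_1,d_2)$:
\[
Q_{d_1,d_2}(\boldsymbol\phi)=d_1 P_1(\phi_1)+d_2P_2(\phi_2)+\int_{L_1}^{L_2}\Big[\tfrac{a}{e}\phi_1^2+\tfrac{b}{G'(0)}\phi_2^2-2\phi_1\phi_2\Big]\rd x .
\]
Here
\[
P_i(\phi_i)=\frac{1}{c_i}\Big[\frac12\iint J_i(x-y)(\phi_i(x)-\phi_i(y))^2+\int\Big(1-\int_{L_1}^{L_2}J_i(x-y)\rd y\Big)\phi_i^2\Big],
\]
with $c_1=e$ and $c_2=G'(0)$. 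Since $\int_{L_1}^{L_2}J_i\le 1$, each $P_i\ge 0$.

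\textbf{Continuity and monotonicity in (i).} Because $\lambda_p$ is an infimum of functions that are nondecreasing and affine in $(d_1,d_2)$, it is nondecreasing and concave in $(d_1,d_2)$. Local boundedness then gives continuity; alternatively, $|Q_{d}-Q_{d'}|\le C|d-d'|$ uniformly over the unit sphere, since $P_i\le C$ there. Strict monotonicity needs more. The infimum is attained by the positive principal eigenfunction $\boldsymbol\phi^{d'}$ (from \cite{fb8,fb7}, using that $\lambda_p$ is an eigenvalue with a positive continuous eigenfunction). Then
\[
\lambda_p(d_1,d_2)\le \lambda_p(d_1',d_2')+(d_1-d_1')P_1(\phi^{d'}_1)+(d_2-d_2')P_2(\phi^{d'}_2)
\]
gives only the wrong direction, so I reverse roles: take $\boldsymbol\phi^d$ optimal for $(d_1,d_2)$. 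This yields
\[
\lambda_p(d_1',d_2')\le \lambda_p(d_1,d_2)-\sum_i(d_i-d_i')P_i(\phi^d_i).
\]
Strictness follows once $P_i(\phi_i^d)>0$, and this holds because the term $\int(1-\int_{L_1}^{L_2} J_i(x-y)\rd y)\phi_i^2$ is positive: $\int_{L_1}^{L_2}J_i(x-y)\rd y<1$ on a set of positive measure near the endpoints, since the interval is bounded and $\int_{\mathbb R}J_i=1$. Combined with $\phi_i^d>0$, this gives strict positivity.

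\textbf{Limit (ii).} For the upper bound, use a constant test function $\boldsymbol\phi=\boldsymbol\xi/\sqrt{L_2-L_1}$, where $\boldsymbol\xi$ is the unit eigenvector of $\mathbf{A}$ for its largest eigenvalue $\Lambda=\frac12\big(-\frac ae-\frac b{G'(0)}+\sqrt{(\frac ae-\frac b{G'(0)})^2+4}\big)$. This gives $\lambda_p(d_1,d_2)\le -\Lambda+(d_1+d_2)C$, so $\limsup\le-\Lambda$. For the lower bound, drop the nonnegative $P_i$ terms: $Q\ge \langle -\mathbf{A}\boldsymbol\phi,\boldsymbol\phi\rangle\ge -\Lambda$ pointwise in $x$, since $\mathbf{A}$ is a symmetric matrix with largest eigenvalue $\Lambda$. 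Hence $\lambda_p\ge -\Lambda$ for all $d$, and the limit equals $-\Lambda$, which is the stated expression.

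\textbf{Limit (iii).} This is the main obstacle. We need $\inf_{\|\boldsymbol\phi\|=1}(d_1P_1+d_2P_2)\to\infty$, so we need a uniform coercivity bound $P_i(\phi_i)\ge c\|\phi_i\|_{L^2}^2$ with $c>0$. That is exactly the statement that the principal eigenvalue of the scalar operator $\phi\mapsto \int_{L_1}^{L_2}J_i(x-y)\phi(y)\rd y-\phi$ on $[L_1,L_2]$ is negative, i.e.\ $\sup_{\|\phi\|=1}\langle N_i\phi,\phi\rangle<1$. I would establish it by compactness: $N_i$ is compact and self-adjoint on $L^2$, so the supremum is attained by an eigenfunction $\phi$ with $N_i\phi=\phi$ if the supremum equals $1$. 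Such a $\phi$ can be taken nonnegative, since $\langle N_i|\phi|,|\phi|\rangle\ge\langle N_i\phi,\phi\rangle$, and then $\phi(x)=\int J_i(x-y)\phi(y)\rd y$. Taking $x_0$ where $\phi$ is maximal gives $\phi(x_0)\le \phi(x_0)\int_{L_1}^{L_2}J_i(x_0-y)\rd y$. Propagating the maximum through $J_i(0)>0$ over the connected interval forces it up to the endpoint, where $\int_{L_1}^{L_2}J_i<1$, a contradiction. Hence $P_i\ge c_i'\|\phi_i\|^2$, so $Q\ge \min(d_1c_1',d_2c_2')-\|\mathbf A\|$, and this tends to $\infty$.
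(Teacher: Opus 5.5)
Your proof is correct. Strict monotonicity in (i) and part (ii) follow the paper's argument. The paper also inserts the normalized principal eigenfunction at $(d_1,d_2)$ into the Rayleigh quotient at $(d_1',d_2')$ and uses $\int_{L_1}^{L_2}\int_{L_1}^{L_2}J_i(x-y)\phi_i^2(x)\,\rd y\,\rd x<\int_{L_1}^{L_2}\phi_i^2\,\rd x$, which is your $P_i(\phi_i^d)>0$. For (ii) it combines $\boldsymbol\phi\mathbf A\boldsymbol\phi^T\le\lambda_{\max}\boldsymbol\phi\boldsymbol\phi^T$ with a constant test function; it builds that function from the top eigenvector of $\mathbf A-\mathbf D$ rather than of $\mathbf A$, which is immaterial.

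The real differences are in the continuity part of (i) and in (iii).

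For continuity, the paper perturbs the eigenvalue equation. It shows that the eigenfunction at $(d_1,d_2)$ is a positive test function for the operator at $(d_1',d_2')$ with $\lambda=\lambda_p(d_1,d_2)-L_0(|d_1-d_1'|+|d_2-d_2'|)$, and then invokes the sup-characterization of \cite[Lemma 2.2]{fb1}. Since $L_0$ depends on $\max\phi_i/\min\phi_i$ for that particular eigenfunction, this gives only a point-dependent Lipschitz estimate. You instead write $\lambda_p$ as an infimum of functions affine in $(d_1,d_2)$ whose slopes $P_i$ lie in $[0,2/c_i]$ on the unit sphere. That yields concavity and a global Lipschitz constant without reference to any eigenfunction. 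On the other hand, the paper's comparison route does not use the Rayleigh quotient for this step.

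For (iii), the paper only cites \cite[Proof of Theorem 1.3]{fb8}. You give a self-contained proof that $\sup_{\|\phi\|=1}\langle N_i\phi,\phi\rangle<1$, using compactness of $N_i$ and a maximum-principle propagation argument based on $J_i(0)>0$ and $\int_{L_1}^{L_2}J_i(L_2-y)\,\rd y\le\frac12$. This even gives explicit linear growth, $\lambda_p(d_1,d_2)\ge\min\{c_1'd_1,c_2'd_2\}-\lambda_{\max}$, where $c_i'>0$ are your coercivity constants and $\lambda_{\max}$ is the largest eigenvalue of $\mathbf A$.
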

\noindent{\bf{Proof.}} (i) Let ${\boldsymbol\phi}= (\phi_1, \phi_2)$ be the corresponding positive eigenfunction pair to $\lambda_p(d_1, d_2)$ normalized by $\|{\boldsymbol\phi}\|_{\textit{\textbf{E}}}=1$. Then we have
\begin{equation}\label{d1-d2}\begin{aligned}
\lambda_p(d_1, d_2) = &\ \frac{d_1}{2e} \int_{L_1}^{L_2} \int_{L_1}^{L_2} J_1(x-y)(\phi_1(x) - \phi_1(y))^2 \rd x \rd y \\
&+ \frac{d_2}{2G'(0)} \int_{L_1}^{L_2} \int_{L_1}^{L_2} J_2(x-y)(\phi_2(x) - \phi_2(y))^2 \rd x \rd y\\
&-\int_{L_1}^{L_2}\left[ \left(- \frac{a}{e} +\frac{d_1}{e}\left( \int_{L_1}^{L_2}J_1(x-y)\rd y-1\right)\right) \phi_1^2(x)+2\phi_1(x)\phi_2(x)\right.\\
&\left.+\left(-\frac{b}{G'(0)}+\frac{d_2}{G'(0)}\left(\int_{L_1}^{L_2}J_2(x-y)\rd y
-1\right)\right)\phi_2^2(x) \right]\rd x .
\end{aligned}
\end{equation}
Denote $$L_0:=\max\left\{\frac{2\max\limits_{x\in[L_1,L_2]}\phi_1(x)}{e\min\limits_{x\in[L_1,L_2]}\phi_1(x)
},\frac{2\max\limits_{x\in[L_1,L_2]}\phi_2(x)}{G'(0)\min\limits_{x\in[L_1,L_2]}\phi_2(x)
}\right\}.$$
For any $(d_1, d_2), (d_1', d_2')\in (0,+\infty)\times(0,+\infty)$, a straightforward calculation gives
\begin{align*}
&-\frac{d_1'}{e} \int_{L_1}^{L_2} J_1(x-y) \phi_1(y) \rd y +\frac{d_1'}{e} \phi_1 + \frac{a}{e} \phi_1 - \phi_2\\
&=\lambda_p(d_1,d_2)\phi_1+\frac{1}{e}(d_1-d_1') \int_{L_1}^{L_2} J_1(x-y) \phi_1(y) \rd y-
\frac{1}{e}(d_1-d_1') \phi_1\\
&\geq \lambda_p(d_1,d_2)\phi_1-\frac{1}{e}|d_1-d_1'| \int_{L_1}^{L_2} J_1(x-y) \phi_1(y) \rd y-\frac{1}{e}|d_1-d_1'| \phi_1\\
&\geq\Big(\lambda_p(d_1,d_2)-\frac 2 e\frac{\max\limits_{x\in[L_1,L_2]}\phi_1}
{\min\limits_{x\in[L_1,L_2]}\phi_1}|d_1-d_1'|\Big)\phi_1\\
&\geq\lf[\lambda_p(d_1,d_2)-L_0|d_1-d_1'|\rr]\phi_1.
\end{align*}
Similarly,
\begin{align*}
&-\frac{d_2'}{G'(0)} \int_{L_1}^{L_2} J_2(x-y) \phi_2(y) \rd y +  \frac{d_2'}{G'(0)}\phi_2 - \phi_1 +\frac{b}{G'(0)} \phi_2 \\
&\geq\lf[\lambda_p(d_1,d_2)-L_0|d_2-d_2'|\rr]\phi_2.
\end{align*}
It follows from \cite[Lemma 2.2]{fb1} that
$$
\lambda_p(d_1',d_2')\geq\lambda_p(d_1,d_2)-L_0(|d_1-d_1'|+|d_2-d_2'|).
$$
Similarly, we have
$$
\lambda_p(d_1',d_2')\leq\lambda_p(d_1,d_2)+L_0(|d_1-d_1'|+|d_2-d_2'|).
$$
Therefore,
$$
|\lambda_p(d_1,d_2)-\lambda_p(d_1',d_2')|\leq L_0(|d_1-d_1'|+|d_2-d_2'|).
$$
This proves the continuity of $\lambda_p(d_1,d_2)$.

Suppose that $(d_1, d_2) > (d_1', d_2')$ (i.e., $d_i > d_i', i=1,2$). By \eqref{d1-d2}, we have
\begin{align*}
\lambda_p(d_1, d_2) > &\ \frac{d'_1}{2e} \int_{L_1}^{L_2} \int_{L_1}^{L_2} J_1(x-y) (\phi_1(x) - \phi_1(y))^2 \rd x \rd y \\
&+ \frac{d'_2}{2G'(0)} \int_{L_1}^{L_2} \int_{L_1}^{L_2} J_2(x-y) (\phi_2(x) - \phi_2(y))^2 \rd x \rd y\\
&-\int_{L_1}^{L_2}\left[ \left(- \frac{a}{e} +\frac{d'_1}{e}\left( \int_{L_1}^{L_2}J_1(x-y)\rd y-1\right)\right) \phi_1^2(x)+2\phi_1(x)\phi_2(x)\right.\\
&\left.+\left(-\frac{b}{G'(0)}+\frac{d'_2}{G'(0)}\left(\int_{L_1}^{L_2}J_2(x-y)\rd y-1
\right)\phi_2^2(x) \right)\right]\rd x \\
\geq &\ \lambda_p(d_1', d_2'),
\end{align*}
where we have used the fact that
$$
\int_{L_1}^{L_2}\int_{L_1}^{L_2}J_i(x-y)\phi_i^2(x)\rd y\rd x<\int_{L_1}^{L_2}\phi_i^2(x)\rd x,~i=1,2.
$$
This proves the monotonicity.

(ii) Thanks to the properties of  $J_1$ and $J_2$, we have
\begin{align*}
\int_{L_1}^{L_2} \int_{L_1}^{L_2} J_i(x-y)\phi_i(x) \phi_i(y) \rd x \rd y &\leq \int_{L_1}^{L_2} \int_{L_1}^{L_2} J_i(x-y)\frac{\phi_i^2(x)+ \phi_i^2(y)}{2} \rd x \rd y \\
&\leq \int_{L_1}^{L_2} \phi_i^2(x)\rd x, ~ i=1,2.
\end{align*}
For any ${\boldsymbol\phi} = (\phi_1, \phi_2) \in \textit{\textbf{E}}$ and $\|{\boldsymbol\phi}\|_{\textit{\textbf{E}}}=1$, we obtain
\begin{align*}
\langle {\boldsymbol{\mathcal{K}}}{\boldsymbol\phi}, {\boldsymbol\phi} \rangle
=&\frac{d_1}{e} \int_{L_1}^{L_2} \int_{L_1}^{L_2} J_1(x-y) \phi_1(x) \phi_1(y)  \rd x \rd y + \frac{d_2}{G'(0)} \int_{L_1}^{L_2} \int_{L_1}^{L_2} J_2(x- y) \phi_2(x) \phi_2(y) \rd x \rd y\\
&+\int_{L_1}^{L_2}\left[ \left(-\frac{a}{e}- \frac{d_1}{e} \right)\phi_1^2(x) + 2 \phi_1(x) \phi_2(x) + \left( -\frac{b}{G'(0)} - \frac{d_2}{G'(0)} \right) \phi_2^2(x) \right]\rd x\\
\le& \int_{L_1}^{L_2} \left( -\frac{a}{e} \phi^2_1(x) + 2 \phi_1(x)\phi_2(x)- \frac{b}{G'(0)} \phi^2_2(x) \right) \rd x\\
=& \int_{L_1}^{L_2}{\boldsymbol\phi} \mathbf{A} {\boldsymbol\phi}^T dx \leq  \int_{L_1}^{L_2}\lambda_{\max} {\boldsymbol\phi} {\boldsymbol\phi}^Tdx
= \int_{L_1}^{L_2} \lambda_{\max}\left(\phi_1^2(x)+ \phi_2^2(x)\right) \rd x=\lambda_{\max},
\end{align*}
where we have used
\begin{equation*}
{\boldsymbol\phi} = (\phi_1, \phi_2), \ \mathbf{A} = \begin{pmatrix} -\frac{a}{e} & 1 \\ 1 & -\frac{b}{G'(0)} \end{pmatrix},\
 {\boldsymbol\phi} \mathbf{A} {\boldsymbol\phi}^T \leq \lambda_{\max} {\boldsymbol\phi} {\boldsymbol\phi}^T,
\end{equation*}
 and  the maximum eigenvalue of the matrix $\mathbf{A}$ is easily calculated to be
\[
\lambda_{\max}= \frac{-\frac{a}{e}-\frac{b}{G'(0)}+\sqrt{\left(\frac{a}{e}-\frac{b}{G'(0)}\right)^2+4}}{2}.
\]
Thus
\begin{align}\label{d1}
\lambda_{p}(d_1,d_2) = -\sup_{\|{\boldsymbol\phi}\|_{\textit{\textbf{E}}}=1} \langle {\boldsymbol{\mathcal{K}}}{\boldsymbol\phi}, {\boldsymbol\phi} \rangle \ge-\lambda_{\max}=\frac{\frac{a}{e}+\frac{b}{G'(0)}-\sqrt{\left(\frac{a}{e}-\frac{b}{G'(0)}\right)^2+4}}{2}.
\end{align}

On the other hand,
let
\[
\tilde{{\boldsymbol\phi}} = (\tilde{\phi_1}, \tilde{\phi_2}) = \left( \frac{m}{\sqrt{(L_2-L_1)(m^2 + n^2)}}, \frac{n}{\sqrt{(L_2-L_1)(m^2 + n^2)}} \right)
\]
with
{\small
\[
n=2,\ m = -\frac{a}{e}- \frac{d_1}{e}- \left(- \frac{b}{G'(0)}-\frac{d_2}{G'(0)} \right)+\sqrt{
\left[ \left(-\frac{a}{e}-\frac{d_1}{e}\right)-\left(-
\frac{b}{G'(0)}-\frac{d_2}{G'(0)}\right)        \right]^2+4}.
\]}
Then clearly,
$\|\tilde{{\boldsymbol\phi}}\|_{\textit{\textbf{E}}}=1$,
and
{\small \begin{align}\label{d2}
\lambda_{p}(d_1,d_2)
\le& -
\langle {\boldsymbol{\mathcal{K}}}\tilde{{\boldsymbol\phi}}, \tilde{{\boldsymbol\phi}} \rangle \nonumber\\
=&-\left[\frac{d_1}{e}\int_{L_1}^{L_2} \int_{L_1}^{L_2}J_1(x-y)\frac{m^2}{(L_2-L_1)(m^2+n^2)}\rd x\rd y\right.\nonumber\\
&\left.\ \ \ \ \ \ +\frac{d_2}{G'(0)}\int_{L_1}^{L_2} \int_{L_1}^{L_2}J_2(x-y)\frac{n^2}{(L_2-L_1)(m^2+n^2)}\rd x\rd y    \right]\nonumber\\
&-\left[\int_{L_1}^{L_2}\left(\left(-\frac{a}{e}-\frac{d_1}{e}\right)\frac{m^2}{(L_2-L_1)(m^2+n^2)}+2\frac{mn}{(L_2-L_1)(m^2+n^2)}   \right.\right.\nonumber\\
&\left.\left.\ \ \ \ \  \ \ \ \ \ \ +\left(-\frac{b}{G'(0)}-\frac{d_2}{G'(0)}\right)\frac{n^2}{(L_2-L_1)(m^2+n^2)}\right)   \rd x \right]\nonumber\\
\le& -\frac{\left(-\frac{a}{e}-\frac{d_1}{e}\right)m^2+2mn+\left(-\frac{b}{G'(0)}-\frac{d_2}{G'(0)}\right)n^2}{m^2+n^2}\nonumber\\
= &-\frac{\left(-\frac{a}{e}-\frac{d_1}{e}\right)+\left(-\frac{b}{G'(0)}-\frac{d_2}{G'(0)}\right) -\sqrt{\left[\left(-\frac{a}{e}-\frac{d_1}{e}\right)-\left(-\frac{b}{G'(0)}-\frac{d_2}{G'(0)}\right)         \right]^2 +4}}{2}\nonumber\\
=&\frac{\frac{a}{e}+\frac{b}{G'(0)}+\frac{d_1}{e}+\frac{d_2}{G'(0)}-\sqrt{ \left( \frac{a}{e}- \frac{b}{G'(0)}+\frac{d_1}{e}-\frac{d_2}{G'(0)}\right)^2      +4}}{2}.
\end{align}}
Letting \(d_1, d_2 \to 0\) in \(\eqref{d2}\) we deduce
\[
\limsup_{(d_1, d_2) \to (0,0)} \lambda_{p(d_1, d_2)} \leq \frac{\frac{a}{e} + \frac{b}{G'(0)} - \sqrt{\left( \frac{a}{e} - \frac{b}{G'(0)} \right)^2 + 4}}{2}.
\]
Combining this with \(\eqref{d1}\), we arrive at the desired conclusion (ii).

(iii) The detailed proof of this result can be found in \cite[Proof of Theorem1.3]{fb8}.\qed
\subsection{A fixed boundary problem}
We consider the corresponding fixed boundary problem of \eqref{A}:
\begin{equation}\label{m0}
\begin{cases}
u_{t}=d_{1}\displaystyle\int_{L_1}^{L_2}J_{1}(x-y)u(t,y)\rd y-d_{1}u-au+ev,&t>0,x \in (L_1,L_2),\\
v_{t}=d_{2}\displaystyle\int_{L_1}^{L_2}J_{2}(x-y)v(t,y)\rd y-d_{2}v-bv+G(u),&t>0,x \in (L_1,L_2),\\
u(0,x)=u_{0}(x),v(0,x)=v_{0}(x),&x \in [L_1,L_2],
\end{cases}
\end{equation}
where $-\infty<L_1<L_2<+\infty$, and initial data $u_0,v_0\in C([L_1,L_2])$ are nonnegative and not identically $0$ simultaneously. It is well known that problem \eqref{m0} admits a unique global positive solution (see, e.g., \cite{fb1}). It's long-time dynamical behavior is given in the following result.
\begin{proposition}\label{990}
Suppose that {\bf{(J)}} holds and $G(u)$ satisfies {\bf(G1)-(G2)}. Let $(u,v)$ be the unique positive solution of \eqref{m0}, $\lambda_p(L_1,L_2)$ be the principal eigenvalue of \eqref{c4}, and $(u^*,v^*)$ be given in \eqref{q} when $\mathbb R_0>1$. Then the following statements are valid:
\begin{itemize}
    \item[(i)] If $\lambda_p(L_1,L_2) < 0$, then problem \eqref{m0} admits a unique positive steady state solution $(\tilde{u},\tilde{v})=(\tilde{u}_{(L_1,L_2)},\tilde{v}_{(L_1,L_2)})\in C([L_1,L_2])\times C([L_1,L_2])$, moreover,
    \[
    \begin{cases}
    \mbox{$(u(t,x),v(t,x))\to(\tilde{u},\tilde{v})$ as $t\to \infty$ uniformly for $x\in[L_1,L_2],$}\\
    \mbox{$0< \tilde{u} (x) \leq u^*$, $0< \tilde{v}(x) \leq v^* $ in $[L_1,L_2]$.}
    \end{cases}
    \]
   \item[(ii)]  If $\lambda_p(L_1,L_2)\geq0$, then $(0,0)$ is the only nonnegative steady state of \eqref{c4}, and
   \[
   \mbox{$(u(t,x),v(t,x))\to(0,0)$ as $t\to\infty$ uniformly for $x\in[L_1,L_2].$}
   \]
   \item[(iii)] If $\mathcal{R}_0 > 1$, and hence $\lambda_p(L_1,L_2) < 0$ for $L_2-L_1\gg1$, then
   \[
   \lim\limits_{-L_1,L_2\to+\infty}(\tilde{u}_{(L_1,L_2)}(x),\tilde{v}_
       {(L_1,L_2)}(x))=(u^{*},v^{*})~ \text{locally uniformly for } ~ x\in\mathbb{R}.\]
\end{itemize}
\end{proposition}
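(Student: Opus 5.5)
The plan is to treat \eqref{m0} as a cooperative, sublinear system on the fixed interval $[L_1,L_2]$ and use monotone dynamics together with the principal eigenpair of \eqref{c4}. Two structural facts drive everything. First, by {\bf(G1)} the system is cooperative, so Lemma \ref{hh} gives comparison for \eqref{m0}. Second, by {\bf(G2)} we have $G(u)\le G'(0)u$, and $G(u)/u$ is strictly decreasing, which is the sublinearity.

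For (i), suppose $\lambda_p<0$ and let $(\phi_1,\phi_2)>\mathbf 0$ be the principal eigenfunction; it exists and is continuous by \cite{fb8,fb7}, since the principal eigenvalue is attained for symmetric kernels. Writing \eqref{c4} in unscaled form, $(\varepsilon\phi_1,\varepsilon\phi_2)$ is a strict lower solution of the stationary problem for small $\varepsilon>0$. This holds because $G(\varepsilon\phi_1)=G'(0)\varepsilon\phi_1+o(\varepsilon)$ and the term $-\lambda_p>0$ absorbs the error. The constant pair $(M,\frac aeM)$ with $M\ge u^*$ large is an upper solution: $-aM+e\cdot\frac aeM=0$, and by {\bf(G2)} we get $G(M)\le \frac{ab}{e}M$. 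The extra terms $d_i(\int J_i-1)\le0$ only help. Starting \eqref{m0} from these two states produces monotone increasing and decreasing solutions, which converge to minimal and maximal positive steady states. Uniqueness comes from the standard sublinear argument. Given two positive steady states $(\tilde u,\tilde v)\le(\hat u,\hat v)$, take the largest $k\in(0,1]$ with $k(\hat u,\hat v)\le(\tilde u,\tilde v)$. If $k<1$, strict sublinearity $G(ku)>kG(u)$ together with the strong maximum principle (Lemma \ref{hh}) contradicts the maximality of $k$. For a general positive solution, positivity at $t=1$ (Lemma \ref{hh}) lets us squeeze it between $\varepsilon\boldsymbol\phi$ and a large upper solution, which gives uniform convergence. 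The bound $\tilde u\le u^*$, $\tilde v\le v^*$ follows by taking $M=u^*$, which is a valid upper solution when $\mathcal R_0>1$; here $\lambda_p<0$ forces $\mathcal R_0>1$ by Proposition \ref{c3}(ii).

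For (ii), suppose $\lambda_p\ge0$. Any nonnegative nontrivial steady state $(\tilde u,\tilde v)$ is positive by the strong maximum principle. Pair the steady-state equation with $\boldsymbol\phi$, using the symmetry of $J_i$ and the weights $1/e,1/G'(0)$ that make $\boldsymbol{\mathcal K}$ self-adjoint. This gives
\[
-\lambda_p\Big\langle \boldsymbol\phi,(\tilde u,\tilde v)\Big\rangle=\int_{L_1}^{L_2}\frac{G(\tilde u)-G'(0)\tilde u}{G'(0)}\phi_2\,\rd x<0,
\]
so $\lambda_p>0$. The argument needs adjusting to cover $\lambda_p=0$; in that case both sides have to vanish, which forces $G(\tilde u)=G'(0)\tilde u$ and contradicts strict sublinearity. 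So $(0,0)$ is the only steady state. For convergence, the solution starting from a large constant upper solution decreases monotonically to a steady state, which must be $(0,0)$; then comparison gives the claim. When $\lambda_p>0$ one can alternatively use $Ce^{-\lambda_p t}\boldsymbol\phi$ directly as an upper solution.

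For (iii), monotonicity of $\tilde u_{(L_1,L_2)}$ in the interval (by comparison, since a steady state on a smaller interval is a lower solution on a larger one) together with the bound by $(u^*,v^*)$ gives a pointwise limit $(U,V)\le(u^*,v^*)$. This limit solves the entire-space stationary problem, and the convergence can be upgraded to local uniform convergence using equicontinuity from the integral equations. The main obstacle is identifying $(U,V)=(u^*,v^*)$. For that I would follow \cite{fb4,fb1}: show $\inf U>0$ by comparing with $\varepsilon\boldsymbol\phi$ translated over a fixed large interval. Then apply the sublinearity/ratio argument to $\underline k=\inf U/u^*$, using that the ratio is attained approximately along a sequence and passing to translated limits, to conclude $\underline k=1$.
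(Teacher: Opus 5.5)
Your route is the paper's own: a lower solution $\delta\boldsymbol\phi$, a constant upper solution, monotone iteration, sublinear uniqueness, and the arguments of \cite{fb4,fb1} for the rest. Parts (i) and (iii) go through. Part (ii), however, contains a sign error that breaks the nonexistence argument as you wrote it.

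Divide the steady-state equations by $e$ and $G'(0)$, respectively. Then the first component of $\boldsymbol{\mathcal K}(\tilde u,\tilde v)$ vanishes and the second equals $(G'(0)\tilde u-G(\tilde u))/G'(0)$. Using $\boldsymbol{\mathcal K}\boldsymbol\phi=-\lambda_p\boldsymbol\phi$ and self-adjointness,
\[
-\lambda_p\big\langle (\tilde u,\tilde v),\boldsymbol\phi\big\rangle=\big\langle \boldsymbol{\mathcal K}(\tilde u,\tilde v),\boldsymbol\phi\big\rangle=\int_{L_1}^{L_2}\frac{G'(0)\tilde u-G(\tilde u)}{G'(0)}\,\phi_2\,\rd x>0 .
\]
Hence a nontrivial nonnegative steady state forces $\lambda_p<0$. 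This contradicts $\lambda_p\ge 0$ at once, with no separate treatment of $\lambda_p=0$.

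Your identity has the opposite sign. It leads you to ``$\lambda_p>0$'', which is compatible with the hypothesis $\lambda_p\ge 0$. So, as written, you have only excluded positive steady states when $\lambda_p=0$. Your identity would even say that positive steady states can exist only when $\lambda_p>0$, contradicting part (i). Your main convergence argument in (ii) relies on $(0,0)$ being the only steady state, so it needs the corrected sign.

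Two smaller points.
\begin{enumerate}
\item With the normalization in \eqref{c4}, $Ce^{-\gamma t}\boldsymbol\phi$ is an upper solution only for $\gamma\le\lambda_p\min\{e,G'(0)\}$ (compare $\Lambda$ in the proof of Lemma \ref{e2}), not for $\gamma=\lambda_p$ in general. With that rate it gives both the decay and the nonexistence of positive steady states when $\lambda_p>0$.
\item In (i), the monotone-in-time limits are a priori only lower and upper semicontinuous, since the nonlocal equation has no smoothing. Your $k$-argument works for bounded measurable steady states bounded away from $0$, so the two limits coincide. This yields continuity of $(\tilde u,\tilde v)$ and, by Dini's theorem, the uniform convergence; you should say this explicitly.
\end{enumerate}
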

\noindent{\bf{Proof}}. Assume that $\lambda_p(L_1,L_2)<0$. We first show that $(\delta \phi_1, \delta\phi_2)$ is a lower solution of the steady state problem of \eqref{m0} for sufficiently small $\delta > 0$, where $(\phi_1, \phi_2)$ is a positive eigenfunction pair corresponding to $\lambda_p(L_1,L_2)$. Thanks to $\lim\limits_{\delta \to 0}(\frac{G(\delta\phi_1)}{\delta\phi_1}-G'(0))=0$, {\bf{(G2)}} and \eqref{c4}, we can choose $\delta >0$ small enough such that
\begin{equation*}
d_1\int_{L_1}^{L_2}J_1(x-y)\delta \phi_1(y)\rd y-d_1\delta\phi_1-a\delta\phi_1+e\delta\phi_2=-\lambda_p(L_1,L_2)e\delta\phi_1>0,
\end{equation*}
and
\begin{align*}
&d_2\int_{L_1}^{L_2}J_2(x-y)\delta \phi_2(y)\rd y-d_2\delta\phi_2-b\delta\phi_2+G(\delta\phi_1)\\
&=-\lambda_p(L_1,L_2)G'(0)\delta\phi_2+\delta\phi_1(\frac{G(\delta\phi_1)}
{\delta\phi_1}-G'(0))\\
&\geq\delta\left[-\lambda_p(L_1,L_2)G'(0)\min_{x \in [L_1,L_2]}\phi_2(x)+(\frac{G(\delta\phi_1)}{\delta\phi_1}-G'(0))\max_{x \in [L_1,L_2]}\phi_1(x)\right]\geq 0.
\end{align*}
This implies that $(\delta\phi_1, \delta\phi_2)$ is a lower solution. Next, we can argue as in the proof of \cite[Proposition 3.7]{fb4} to obtain the desired conclusions in (i). Moreover, following the proof of \cite[Proposition 3.5]{fb1}, we can obtain (ii) and (iii). Since only obvious modifications are needed, we omit the details.\qed

\section{Spreading-vanishing dichotomy and criteria}
Recall that we always assume {\bf{(J)}} and {\bf{(W)}} hold, {\bf{\(G(u)\)}} satisfies {\bf(G1)-(G2)}, the initial function pair \((u_0, v_0)\) satisfies \(\eqref{B}\). Let  \((u, v, g, h)\) be the unique positive solution of \(\eqref{A}\). According to Theorem \ref{B2}, we can define
\[
g_{\infty} := \lim_{t \to \infty} g(t)\in [-\infty, -h_0), \quad h_{\infty} := \lim_{t \to \infty} h(t)\in (h_0, \infty].
\]
The conclusions in Theorems \ref{T12}, \ref{T13}, \ref{T14}  and \ref{thm6} will follow from the results to be proved below.
\subsection{Spreading-vanishing dichotomy}
\begin{lemma}\label{d3}
If \( h_{\infty} - g_{\infty} < \infty \), then
$$
\lim_{t \to \infty} \|u\|_{C([g(t), h(t)])}= \lim_{t \to \infty} \|v\|_{C([g(t), h(t)])}= 0
$$
and
$$\lambda_p(g_{\infty}, h_{\infty}) \geq 0,$$
where \(\lambda_p(g_{\infty}, h_{\infty})\) is the principal eigenvalue of \eqref{c4} with \([L_1, L_2]=[g_{\infty}, h_{\infty}]\).
\end{lemma}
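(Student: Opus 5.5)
We first prove $\lambda_p(g_\infty,h_\infty)\ge 0$ by contradiction, and then derive the convergence of $(u,v)$ to zero from it.

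\textbf{Step 1: the sign of the eigenvalue.} Suppose $\lambda_p(g_\infty,h_\infty)<0$. By the continuity and monotonicity of $\lambda_p$ in the interval length (Proposition \ref{c3}(i)), we can fix a small $\epsilon>0$ and then $T\gg1$ such that:
\begin{itemize}
\item $g(t)<g_\infty+\epsilon$ and $h(t)>h_\infty-\epsilon$ for $t\ge T$;
\item $\lambda_p(g_\infty+\epsilon,h_\infty-\epsilon)<0$.
\end{itemize}
Let $(\underline u,\underline v)$ solve the fixed boundary problem \eqref{m0} on $[L_1,L_2]=[g_\infty+\epsilon,h_\infty-\epsilon]$ with initial data $(u(T,\cdot),v(T,\cdot))$. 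By Lemma \ref{hh}, applied to the differences (the quasi-monotone structure lets us write $G(u)-G(\underline u)=c(t,x)(u-\underline u)$ with $c\ge0$ bounded), we get $u(t+T,x)\ge\underline u(t,x)$ and $v(t+T,x)\ge\underline v(t,x)$ on $[L_1,L_2]$. By Proposition \ref{990}(i), $(\underline u,\underline v)\to(\tilde u,\tilde v)>0$ uniformly. Hence $\liminf_{t\to\infty}v(t,x)\ge \tilde v(x)>0$ on $[L_1,L_2]$, and likewise for $u$.

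Plug this into the free boundary equation. Since $W(0)>0$ and $W$ is continuous, and $J_1(0)>0$, the integrand of $h'(t)$ is bounded below by a positive constant on a set of positive measure near $h(t)$. More robustly, the term
$$\int_{g(t)}^{h(t)}\int_{h(t)}^\infty J_1(x-y)u\,\rd y\,\rd x\ \ge\ \int_{L_1}^{L_2}\int_{h_\infty}^{\infty}J_1(x-y)\tilde u(x)/2\,\rd y\,\rd x,$$
and this lower bound is positive because $J_1>0$ near $0$ and $L_2=h_\infty-\epsilon$ is within the support range. This gives $h'(t)\ge c_0>0$ for all large $t$, contradicting $h_\infty<\infty$. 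So $\lambda_p(g_\infty,h_\infty)\ge0$. The only care needed is choosing $\epsilon$ smaller than the radius on which $J_1>\delta_0$.

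\textbf{Step 2: convergence to zero.} Take the solution $(\bar u,\bar v)$ of \eqref{m0} on $[g_\infty,h_\infty]$ with initial data $(u_0,v_0)$ extended by zero. By Lemma \ref{hh} again, using the comparison on the larger fixed domain and the fact that $u,v$ vanish outside $[g(t),h(t)]$, we get $u\le\bar u$ and $v\le\bar v$. Since $\lambda_p(g_\infty,h_\infty)\ge0$, Proposition \ref{990}(ii) gives $(\bar u,\bar v)\to(0,0)$ uniformly, which yields the stated limits.

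\textbf{Main obstacle.} The delicate point is justifying $u\le\bar u$ in Step 2. The fixed-domain kernel integrates over $[g_\infty,h_\infty]\supset[g(t),h(t)]$, and at the moving boundary $u=0\le\bar u$. So the comparison is really on the moving domain, where $\bar u$ is a supersolution because its nonlocal term only gains from the positive contributions outside $[g(t),h(t)]$. Lemma \ref{vv} applies to $\bar u-u$, $\bar v-v$ on $D_T$, with the mean value coefficient for $G$ being nonnegative. The extended initial data are only continuous with zero values outside $[-h_0,h_0]$, which is acceptable for \eqref{m0}.
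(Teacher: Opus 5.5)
Your proof is correct and takes essentially the same route as the paper. To get $\lambda_p(g_\infty,h_\infty)\ge 0$ you argue by contradiction with a fixed-domain lower solution converging to a positive steady state, which forces $h'(t)$ to stay above a positive constant; the decay then follows by comparison with the fixed-domain problem on $[g_\infty,h_\infty]$ and Proposition \ref{990}(ii). The differences are cosmetic: you compare on $[g_\infty+\epsilon,h_\infty-\epsilon]$ instead of $[g(T),h(T)]$, and you use only the $J_1$ flux term in $h'(t)$ (which suffices since the $W$-term is nonnegative), whereas the paper keeps both terms.
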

\noindent{\bf{Proof}}.
We first prove that \(\lambda_p(g_{\infty}, h_{\infty}) \geq 0\). Suppose, to the contrary, that \(\lambda_p(g_{\infty}, h_{\infty}) < 0\). Then, by  Proposition \ref{c3} and assumptions  {\bf{(J)}} and {\bf{(W)}}, there exists a sufficiently large  \(T_{0} > 0\) such that
\[
\lambda_p(g(T), h(T)) < 0, \quad |g(T) - g_{\infty}| < \varepsilon_1  \quad\text{and}\quad |h(T) - h_{\infty}| < \varepsilon_1, \quad T>T_{0},
\]
where \(\varepsilon_1 \in (0, h_0/4)\) is small enough such that
\[
J_1(x) > 0 \quad\mbox{for } x \in [-4\varepsilon_1, 4\varepsilon_1] \quad\mbox{and}\quad\int_{\varepsilon_1}^{h_0} W(x) \rd x > 0.
\]

According to Proposition \ref{990}, the solution \((u_1(t, x), v_1(t, x))\) of the  following problem
\begin{equation}\label{d4}
\begin{cases}
(u_1)_t = d_1 \displaystyle\int_{g(T)}^{h(T)} J_1(x-y) u_1(t, y) \rd y - d_1 u_1 - a u_1 + e v_1, & t > T,\  x \in [g(T), h(T)], \\
(v_1)_t = d_2 \displaystyle\int_{g(T)}^{h(T)} J_2(x-y) v_1(t, y) \rd y - d_2 v_1 - b v_1 + G(u_1), & t > T,\  x \in [g(T), h(T)], \\
u_1(T, x) = u(T, x), \quad v_1(T, x) = v(T, x), & x \in [g(T), h(T)],
\end{cases}
\end{equation}
converges to the unique positive steady state \((\tilde{u}_1, \tilde{v}_1)\) of \eqref{d4} uniformly in \([g(T), h(T)]\) as \(t \to +\infty\). Moreover, a simple comparison argument gives
$$u(t,x)\ge u_1(t,x), \quad v(t,x)\ge v_1(t,x)\quad\text{for} ~ t>T ~ \text{and}~ x\in [g(T), h(T)].$$
Thus, there exists \( T_1 > T \) such that
\[
u(t,x) \geq \frac{1}{2} \tilde{u}_1(x) > 0, \quad v(t,x) \geq \frac{1}{2} \tilde{v}_1(x) > 0 \quad \text{for } t > T_1 \text{ and } x \in [g(T), h(T)].
\]
Now, we denote
\[
\delta_1 := \inf_{x \in [-4\varepsilon_1, 4\varepsilon_1]} J_1(x) > 0, \quad \delta_2 := \inf_{x \in [g(T), h(T)]} \tilde{u}_1(x) > 0,
\quad \delta_3 := \inf_{x \in [g(T), h(T)]} \tilde{v}_1(x) > 0.
\]
Then for \( t > T_1 \), we have
\begin{align*}
h'(t)&= \mu \int_{g(t)}^{h(t)}\int_{h(t)}^{+\infty}J_1(x-y) u(t,x) \rd y \rd x + \mu\rho \int_{g(t)}^{h(t)} v(t,x) W(h(t)-x) \rd x \\
&\geq \mu \int_{g(T)}^{h(T)}\int_{h(t)}^{h(t)+2\varepsilon_1} J_1(x-y) u(t,x) \rd y \rd x + \mu \rho\int_{g(T)}^{h(T)} v(t,x) W(h(t)-x) \rd x\\
&\geq \frac{1}{2}\mu \int_{g(T)}^{h(T)}\int_{h(t)}^{h(t)+2\varepsilon_1} J_1(x-y) \tilde{u}_1(x) \rd y \rd x + \frac{1}{2}\mu \rho\int_{g(T)}^{h(T)} \tilde{v}_1(x) W(h(t)-x) \rd x\\
&\geq \frac{1}{2}\mu\delta_2 \int_{h(t)-2\varepsilon_1}^{h(t)-\varepsilon_1}\int_{h(t)}^{h(t)+2\varepsilon_1} J_1(x-y) \rd y \rd x + \frac{1}{2}\mu \rho\delta_3\int_{h(t)-h(T)}^{h(t)-g(T)} W(y) \rd y\\
&\geq \mu\varepsilon^2\delta_1 \delta_2 +\frac{1}{2}\mu\rho\delta_3\int_{\varepsilon_1}^{h_0}W(y)\rd y>0,
\end{align*}
where we have used  the fact $\left[ h(t)-2\varepsilon_1, h(t)-\varepsilon_1  \right]\subset [g(T), h(T)]$ for $t\ge T$.
This implies \( h_{\infty} = +\infty \),  which contradicts the assumption that  \( h_{\infty} - g_{\infty} < +{\infty} \). Therefore, \( \lambda_p(g_{\infty}, h_{\infty}) \geq 0 \).

Next, we show that \( (u(t,x), v(t,x)) \to (0, 0) \) uniformly in \( [g(t), h(t)] \) as \( t \to \infty \).  Denote by \( (u_2(t,x), v_2(t,x)) \)the unique solution of
\[
\begin{cases}
(u_2)_t = d_1 \displaystyle\int_{g_\infty}^{h_\infty} J_1(x-y) u_2(t,y) \rd y - d_1 u_2 - au_2 + e v_2, & t > 0, \, x \in [g_\infty, h_\infty], \\
(v_2)_t = d_2 \displaystyle\int_{g_\infty}^{h_\infty} J_2(x-y) v_2(t,y) \rd y - d_2 v_2 - b v_2 + G(u_2), & t > 0, \, x \in [g_\infty, h_\infty], \\
u_2(0,x) = u_{20}(x), \quad v_2(0,x) = v_{20}(x), & x \in [g_\infty, h_\infty],
\end{cases}
\]
where \( u_{20}(x) = u_0(x) \), \( v_{20}(x) = v_0(x) \) if $x\in[-h_0,h_0]$ and $u_{20}(x) =v_{20}(x)=0$ if $|x|> h_0$. By Lemma \ref{vv}, we have
\[
0\leq u(t,x) \leq u_2(t,x), \quad 0\leq v(t,x) \leq v_2(t,x) \quad \text{for } t > 0 \text{ and } x \in [g(t), h(t)].
\]
Since $\lambda_{p}(g_\infty, h_\infty) \geq 0$, Proposition \ref{990} infers that $(u_2(t, x), v_2(t, x)) \rightarrow (0,0)$ uniformly for $x \in [g_\infty, h_\infty]$ as $t \rightarrow \infty$, and hence
\[
\lim_{t \rightarrow \infty} (u(t, x), v(t, x)) = (0,0) \quad \text{uniformly for } x \in [g(t), h(t)].
\]
This completes the proof. \qed

\begin{lemma} \label{d5}
If $\mathcal{R}_0 \le 1$, then
\[
\lim_{t \rightarrow \infty} \|u\|_{C([g(t), h(t)])} = \lim_{t \rightarrow \infty} \|v\|_{C([g(t), h(t)])} = 0.
\]
\end{lemma}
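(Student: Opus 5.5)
We compare the solution with the spatially homogeneous ODE system \eqref{1}. Set
\[
\bar u(t):=U(t),\qquad \bar v(t):=V(t),
\]
where $(U,V)$ solves \eqref{1} with $U(0)=\|u_0\|_\infty$ and $V(0)=\|v_0\|_\infty$. We claim that $(u,v)\le (U,V)$ on $[g(t),h(t)]$ for all $t>0$, with the boundary $(g,h)$ of the actual solution kept fixed.

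To verify this, take $w_1=U-u$ and $w_2=V-v$ on $\bar D_T=\{(t,x):g(t)\le x\le h(t)\}$. Since $J_i\ge0$ and $\int_{g(t)}^{h(t)}J_i(x-y)\,\rd y\le 1$, a constant satisfies
\[
d_1\int_{g(t)}^{h(t)} J_1(x-y)\,U\,\rd y-d_1U\le 0 .
\]
Hence $U_t=-aU+eV\ge d_1\int_{g(t)}^{h(t)}J_1U\,\rd y-d_1U-aU+eV$, and the analogous inequality holds for $V$. Subtracting the equations in \eqref{A} and writing $G(V\text{-side})$ through $G(U)-G(u)=G'(\xi)(U-u)$ with $G'\ge 0$, we find that $(w_1,w_2)$ satisfies the inequalities of Lemma \ref{vv} with $a_{12}=e\ge 0$ and $a_{22}=G'(\xi)\ge0$; the latter is bounded because $u,U$ are bounded by Lemma \ref{cc}. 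On the boundary, $w_i\ge 0$ because $u=v=0$ there, and initially $w_i(0,x)\ge 0$. Lemma \ref{vv} (the maximum principle on the moving domain) therefore gives $u\le U$ and $v\le V$.

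It remains to show $(U,V)\to(0,0)$ when $\mathcal R_0\le1$, which is the classical result of \cite{fb13} quoted in the introduction. If a self-contained argument is wanted, we use a Lyapunov-type argument. By {\bf(G2)}, $G(z)<G'(0)z\le \frac{ab}{e}z$ for $z>0$. Consider $L=U+\frac{a}{G'(0)}V$, or more naturally the linear combination $e\,U\cdot(\ldots)$ built from the left eigenvector of the linearization, which is singular at $\mathcal R_0=1$. Along trajectories we get $L'\le 0$, with equality only where $U=0$ or $V=0$ in a non-invariant way. LaSalle's invariance principle then gives convergence to $(0,0)$, because $(0,0)$ is the only nonnegative equilibrium: an equilibrium requires $G(u^*)/u^*=ab/e\ge G'(0)$, which contradicts the strict decrease of $G(z)/z$ in {\bf(G2)}. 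Alternatively, the planar cooperative system is monotone, so $(U,V)$ converges to an equilibrium, and that equilibrium must be $0$.

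The only delicate point is the critical case $\mathcal R_0=1$, where the linearization is degenerate. There we rely on the strict inequality $G(z)<G'(0)z$ from {\bf(G2)}, either through the monotone-dynamics argument (a bounded solution of a planar cooperative ODE converges to an equilibrium) or by citing \cite{fb13} directly. Once $(U,V)\to 0$, the bound $0\le u\le U$, $0\le v\le V$ yields the uniform convergence of $\|u\|_{C([g(t),h(t)])}$ and $\|v\|_{C([g(t),h(t)])}$ to zero, which proves the lemma.
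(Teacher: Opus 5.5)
Your proposal is correct and is essentially the paper's proof: the comparison $u\le U(t)$, $v\le V(t)$ comes from Lemma \ref{vv} with the actual free boundaries $g,h$ held fixed, and your $L=U+\frac{a}{G'(0)}V$ is just $\frac{a}{G'(0)}$ times the paper's Lyapunov function $\frac{G'(0)}{a}\bar u+\bar v$ (you use it for all $\mathcal R_0\le 1$, while the paper cites \cite{fb13} when $\mathcal R_0<1$). The only difference is the last step, where you invoke LaSalle or planar monotone dynamics instead of the paper's direct argument that $\bar u(t_n)\ge\delta$ along a sequence contradicts the existence of the limit of the Lyapunov function; for LaSalle, note that $L'$ vanishes only on $\{U=0\}$ (not ``$U=0$ or $V=0$''), and the relation $U'=eV$ there is what makes the origin the largest invariant subset.
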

\noindent{\bf{Proof.}} Let $(\bar{u}(t), \bar{v}(t))$ be the solution of the following ODE problem
\begin{align}\label{d6}
\left\{
\begin{aligned}
&u'(t) = -au + ev, & t > 0, \\
&v'(t) = -bv + G(u), & t > 0, \\
&u(0) = \|u_0\|_\infty, v(0) = \|v_0\|_\infty.
\end{aligned}
\right.
\end{align}
If $\mathcal{R}_0 = \frac{eG'(0)}{ab} < 1$, it then follows from \cite{fb13} that the solution $(0,0)$ of problem \eqref{d6} is globally attractive and thus
\[
(\bar{u}(t), \bar{v}(t)) \rightarrow (0,0) \quad \text{as } t \rightarrow \infty.
\]
If $\mathcal{R}_0= 1$, then we define
\[
V(t)=\alpha \bar u(t)+\bar v(t) \quad \mbox{ for }t>0,
\]
where $\alpha>0$ is a constant to be determined.
Therefore, $V(t)\geq 0$ for $t\geq 0$.
Moreover,
\begin{align*}
   V'(t)=&\ \alpha \bar u'(t)+\bar v'(t) \\
   =&\ \alpha \lf[-a \bar u+e\bar v\rr]-b\bar v+G(\bar u)\\
   =&-\alpha a \bar u+\lf[\alpha e \bar v-b \bar v\rr]+G(\bar u).
\end{align*}
Let $\alpha:=\frac{G'(0)}{a}>0$; then
\[
V'(t)=-G'(0)\bar u+G(\bar u)\leq 0 \quad\mbox{for }\bar u\geq 0
\]
due to $\mathcal{R}_0=1$ and {\bf(G2)}. Therefore, $V_{\infty}:=\lim_{t\rightarrow \infty}V(t)$ exists.
Now we claim $\lim_{t\rightarrow \infty}\bar u(t)=0$. Otherwise, there exist a constant $\delta>0$ and a sequence $t_n>0$ such that
$\bar u(t_n)\geq \delta>0$ for all large $n$.
Due to $G({u})<G'(0){u}$ for $u>0$,
\[
V'(t_n)=-G'(0)\bar u(t_n)+G(\bar u(t_n))\leq-G'(0)\delta+G(\delta)<0
\]
for all large $n$, which contradicts the fact that $V_{\infty}$ exists. Hence, we have $\lim_{t\rightarrow \infty}\bar u(t)=0$. Analogously we can show that $\lim_{t\rightarrow \infty}\bar v(t)=0$.

Moreover, by the comparison principle (Lemma \ref{vv}), we obtain
\begin{equation}\label{777}
u(t, x) \leq \bar{u}(t), \quad v(t, x) \leq \bar{v}(t) \quad \text{for } t \geq 0 \text{ and } x \in [g(t), h(t)].
\end{equation}
Therefore, \[
\lim_{t \rightarrow \infty} \max_{x \in [g(t), h(t)]} u(t, x) = \lim_{t \rightarrow \infty} \max_{x \in [g(t), h(t)]} v(t, x) = 0.
\]
This completes the proof. \qed

\begin{lemma}\label{d7}
If $\mathcal{R}_0 > 1$, then $h_\infty < +\infty$ if and only if $-g_\infty < +\infty$.
\end{lemma}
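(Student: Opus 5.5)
By symmetry it suffices to show that $h_\infty<+\infty$ implies $-g_\infty<+\infty$. We argue by contradiction: assume $h_\infty<\infty$ but $g_\infty=-\infty$. The aim is to show that $h'(t)$ is then bounded below by a positive constant for all large $t$, which forces $h_\infty=\infty$.

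\textbf{Step 1: a large interval with negative eigenvalue.} Since $\mathcal R_0>1$, Proposition \ref{c3} gives some $\ell$ (namely $2L^*$ in case (iv), or any length in case (iii)) such that $\lambda_p(L_1,L_2)<0$ whenever $L_2-L_1>\ell$. Because $g(t)\to-\infty$ while $h(t)\uparrow h_\infty$, we can choose $T$ so large that $h(T)-g(T)>\ell$ and $h_\infty-h(T)<\varepsilon_1$. Here $\varepsilon_1\in(0,h_0/4)$ is fixed so that $J_1>0$ on $[-4\varepsilon_1,4\varepsilon_1]$.

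\textbf{Step 2: a uniform positive lower bound on $(u,v)$.} As in the proof of Lemma \ref{d3}, let $(u_1,v_1)$ solve the fixed-boundary problem \eqref{d4} on $[g(T),h(T)]$ with data $(u(T,\cdot),v(T,\cdot))$. The comparison principle (Lemma \ref{hh}/\ref{vv}) gives $u\ge u_1$ and $v\ge v_1$ on $[g(T),h(T)]$ for $t>T$. Proposition \ref{990}(i) gives $(u_1,v_1)\to(\tilde u_1,\tilde v_1)>0$ uniformly. Hence there is $T_1>T$ such that
\[
u\ge \tfrac12\tilde u_1\ge \tfrac12\delta_2>0 \quad\text{and}\quad v\ge \tfrac12\delta_3>0 \qquad\text{on } [g(T),h(T)] \text{ for } t>T_1 .
\]

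\textbf{Step 3: the lower bound on $h'(t)$.} For $t>T_1$ we have $h(t)-h(T)<\varepsilon_1$, so $[h(t)-2\varepsilon_1,h(t)-\varepsilon_1]\subset[g(T),h(T)]$ once $h(T)-g(T)>2\varepsilon_1$. The pathogen flux term then satisfies
\[
\mu\int\!\!\int J_1(x-y)u\,\rd y\,\rd x\ \ge\ \tfrac12\mu\delta_2\int_{h(t)-2\varepsilon_1}^{h(t)-\varepsilon_1}\int_{h(t)}^{h(t)+2\varepsilon_1}J_1(x-y)\,\rd y\,\rd x\ \ge\ c\,\mu\varepsilon_1^2\delta_1\delta_2>0 .
\]
This constant is independent of $t$. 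The $W$-term is nonnegative, so it can simply be dropped, and no positivity of $W$ away from $0$ is needed. Thus $h'(t)\ge c>0$ for all $t>T_1$, so $h_\infty=\infty$, a contradiction. The case $g_\infty>-\infty$, $h_\infty=\infty$ is handled symmetrically, using the mirror form of the flux term in the $g'$ equation.

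\textbf{Main obstacle.} The delicate point is Step 2, because the comparison with \eqref{d4} must hold on the fixed interval $[g(T),h(T)]\subset[g(t),h(t)]$. This follows because $g$ is decreasing and $h$ is increasing, and because on the smaller domain $(u,v)$ is an upper solution: the integrals over the larger domain only add nonnegative terms. Everything else repeats the estimate in Lemma \ref{d3}.
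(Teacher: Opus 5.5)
Your proof is correct and follows the paper's argument. The paper treats the mirror case ($h_\infty=+\infty$, $-g_\infty<+\infty$): it picks $T$ with $\lambda_p(g(t),h(t))<0$ for $t\ge T$ and repeats the comparison-with-\eqref{d4} argument of Lemma~\ref{d3} to get $g'(t)<-\delta$ for large $t$, exactly as you do for $h'(t)$, and your choice to drop the nonnegative $W$-term and use only the pathogen flux is a harmless simplification of that estimate.
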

\noindent{\bf{Proof.}}  We argue by contradiction. Suppose, without loss of generality, that \( h_\infty = +\infty \) and \( -g_\infty < +\infty \). Using $\mathcal{R}_0 > 1$, Proposition \ref{c3} and $h_\infty = +\infty$,  we can find a large constant $T > 0$ such that
$\lambda_{p}(g(t), h(t)) < 0$ for  $t\geq T$. Hence we can repeat the  argument in the proof of Lemma \ref{d3} to obtain constants $\delta > 0$ and $T_1 > T$ such that
\[
g'(t) = -\mu \displaystyle\int_{g(t)}^{h(t)} \int_{-\infty}^{g(t)} J_1(x-y) u(t, x) \rd y \rd x - \mu\rho \int_{g(t)}^{h(t)} v(t, x) W(x-g(t))\rd x < -\delta \quad \text{for } t > T_1.
\]
This is a contradiction to $-g_\infty < +\infty$. \qed
\begin{lemma}\label{d8}
 If $\mathcal{R}_0 > 1$ and $h_\infty - g_\infty = +\infty$, then
\[
\lim_{t \rightarrow \infty} (u(t, x), v(t, x)) = (u^*, v^*) \quad \text{locally uniformly in } \mathbb{R},
\]
where $(u^*, v^*)$ is defined in \eqref{q}.
\end{lemma}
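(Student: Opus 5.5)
By Lemma \ref{d7}, $h_\infty-g_\infty=+\infty$ forces $h_\infty=-g_\infty=+\infty$. The plan is to squeeze $(u,v)$ between a lower and an upper bound that both converge to $(u^*,v^*)$ locally uniformly.

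\emph{Upper bound.} Let $(\bar u(t),\bar v(t))$ solve the ODE system \eqref{d6}, with initial data $(\|u_0\|_\infty,\|v_0\|_\infty)$. As in the proof of Lemma \ref{d5}, Lemma \ref{vv} gives $u\le \bar u(t)$ and $v\le\bar v(t)$ on $[g(t),h(t)]$. Since $\mathcal R_0>1$ and both components of the initial data are positive, the result of \cite{fb13} gives $(\bar u,\bar v)(t)\to(u^*,v^*)$. Hence
\[
\limsup_{t\to\infty}u(t,x)\le u^*,\qquad \limsup_{t\to\infty}v(t,x)\le v^*
\]
uniformly in $x$.

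\emph{Lower bound.} Fix $L>0$ large enough that $\lambda_p(-L,L)<0$; this is possible by Proposition \ref{c3}(iii)--(iv). Choose $T_L$ such that $g(t)<-L$ and $h(t)>L$ for $t\ge T_L$. Let $(u_L,v_L)$ solve the fixed boundary problem \eqref{m0} on $[-L,L]$ for $t>T_L$, with initial data $(u(T_L,\cdot),v(T_L,\cdot))$ restricted to $[-L,L]$; these data are positive by Lemma \ref{vv}. Because $[-L,L]\subset[g(t),h(t)]$ for $t\ge T_L$ and $u,v\ge0$, the pair $(u,v)$ is a supersolution of \eqref{m0} on $[-L,L]$. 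Lemma \ref{hh}, applied to the differences (with $G(u)-G(u_L)=G'(\xi)(u-u_L)$ and $G'\ge0$), then gives $u\ge u_L$ and $v\ge v_L$ there. By Proposition \ref{990}(i), $(u_L,v_L)\to(\tilde u_{(-L,L)},\tilde v_{(-L,L)})$ uniformly on $[-L,L]$. Therefore, for any compact $K\subset\mathbb R$ and $L$ large enough that $K\subset[-L,L]$,
\[
\liminf_{t\to\infty}u(t,x)\ge \tilde u_{(-L,L)}(x),\qquad \liminf_{t\to\infty}v(t,x)\ge \tilde v_{(-L,L)}(x)
\]
uniformly on $K$.

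\emph{Conclusion.} Let $L\to\infty$ and apply Proposition \ref{990}(iii): $\tilde u_{(-L,L)}\to u^*$ and $\tilde v_{(-L,L)}\to v^*$ locally uniformly. Combined with the upper bound, this yields the claim.

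The main point needing care is the comparison step on the fixed domain. I expect no genuine obstacle beyond checking that the restricted initial data are admissible for Proposition \ref{990}, that is, nonnegative and not identically zero, which holds by strict positivity of $(u,v)$ in the interior.
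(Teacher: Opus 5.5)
Your proof is correct and is essentially the argument the paper intends. The paper omits the proof and cites \cite[Lemma 4.6]{fb4}, which likewise combines the ODE upper bound from \eqref{d6} with a lower bound from comparison with the fixed-boundary problem \eqref{m0} on large intervals and Proposition~\ref{990}(iii). Your use of symmetric intervals $[-L,L]$ rather than intervals of the form $[g(T),h(T)]$ is only a cosmetic difference.
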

\noindent{\bf{Proof.}} The proof is similar to that in previous works (see, e.g., \cite[Lemma 4.6]{fb4}, so we omit it here. \qed

\vspace{1em}

 Clearly Theorem \ref{T12} is an immediate consequence of Lemmas \ref{d3}, \ref{d5} and \ref{d8}.
\subsection{Spreading-vanishing criteria}
\begin{lemma}\label{666}
If $\mathcal{R}_0 <1$, then $h_\infty-g_\infty<\infty$.
\end{lemma}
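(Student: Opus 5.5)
The plan is to construct an exponentially decaying spatially homogeneous upper bound for $(u,v)$ and then integrate the free boundary equations. When $\mathcal R_0<1$, this turns the growth of $h(t)-g(t)$ into a Gronwall inequality with an integrable coefficient.

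\emph{Step 1: exponential decay of $(u,v)$.} Consider the matrix
\[
\mathbf M:=\begin{pmatrix} -a & e\\ G'(0) & -b\end{pmatrix}.
\]
Its trace is $-(a+b)<0$ and its determinant is $ab-eG'(0)=ab(1-\mathcal R_0)>0$, so both eigenvalues are real and negative. Since the off-diagonal entries are positive, the Perron--Frobenius theorem gives a principal eigenvalue $-\gamma<0$ with eigenvector $(p,q)$, $p,q>0$. By {\bf (G1)}--{\bf (G2)} we have $G(z)\le G'(0)z$ for $z\ge0$. Set
\[
\bar U(t):=Mpe^{-\gamma t},\qquad \bar V(t):=Mqe^{-\gamma t},
\]
with $M$ chosen so large that $Mp\ge\|u_0\|_\infty$ and $Mq\ge\|v_0\|_\infty$. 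Then $\bar U'=-a\bar U+e\bar V$ and $\bar V'=-b\bar V+G'(0)\bar U\ge -b\bar V+G(\bar U)$. Because a constant-in-space function satisfies $d_i\int_{g}^{h}J_i(x-y)\,\rd y-d_i\le 0$, the pair $(\bar U,\bar V)$ is an upper solution on $D_T$ for every $T$. Applying Lemma \ref{vv} to the differences $(\bar U-u,\bar V-v)$, as was done for \eqref{777}, gives for all $t\ge0$ and $x\in[g(t),h(t)]$
\[
u(t,x)\le Mpe^{-\gamma t},\qquad v(t,x)\le Mqe^{-\gamma t}.
\]
To apply Lemma \ref{vv} here, I would write $G(\bar U)-G(u)=c(t,x)(\bar U-u)$ with $c\ge0$ bounded, using {\bf (G1)}.

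\emph{Step 2: Gronwall for the interval length.} I would add the equations for $h'$ and $-g'$, using $\int_{h}^{\infty}J_1+\int_{-\infty}^{g}J_1\le 1$ and $W\le\|W\|_\infty$, exactly as in the proof of Lemma \ref{a3}. This gives
\[
[h(t)-g(t)]'\le \mu\int_{g(t)}^{h(t)}u\,\rd x+2\mu\rho\|W\|_\infty\int_{g(t)}^{h(t)}v\,\rd x\le C e^{-\gamma t}\,[h(t)-g(t)],
\]
where $C=\mu M(p+2\rho\|W\|_\infty q)$. Hence
\[
h(t)-g(t)\le 2h_0\exp\Big(\int_0^\infty Ce^{-\gamma s}\,\rd s\Big)=2h_0e^{C/\gamma}<\infty,
\]
and therefore $h_\infty-g_\infty<\infty$.

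The only point needing care is Step 1, namely the justification of the comparison on the moving domain. Lemma \ref{vv} requires only $\bar U-u\ge0$ and $\bar V-v\ge0$ at $x=g(t)$ and $x=h(t)$, which holds since $u=v=0$ there, together with nonnegative initial data, which holds by the choice of $M$. So this obstacle is mild. The key structural input is that $\mathcal R_0<1$ makes $\mathbf M$ stable, which yields a uniform exponential rate $\gamma$ rather than the mere convergence to zero obtained in Lemma \ref{d5}.
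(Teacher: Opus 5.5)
Your proof is correct and follows essentially the same route as the paper. You obtain a spatially homogeneous, exponentially decaying upper bound for $(u,v)$ via Lemma \ref{vv}, and then a Gronwall estimate for $h(t)-g(t)$ from the free boundary conditions. The one difference is that you build the exponential bound explicitly from the Perron eigenvector of the linearized matrix, using $G(z)\le G'(0)z$; the paper instead compares with the solution of the nonlinear ODE \eqref{d6} and cites its exponential decay as well known, so your version is slightly more self-contained.
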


\noindent{\bf{Proof}}. Since \( \mathcal{R}_0 < 1 \), it is well known that the solution \((\bar{u}(t), \bar{v}(t))\) of the ODE system \eqref{d6} decays exponentially. Therefore, there exist constants \( c_1 > 0 \) and \( \delta > 0 \) independent of \( t \) such that
$$
\bar{u}(t)\leq c_1e^{-\delta t},\quad\bar{v}(t)\leq c_1e^{-\delta t}\quad \textnormal{for}~t\geq0.
$$
It then follows from \eqref{777} that
$$
u(t, x) \leq c_1e^{-\delta t}, \quad v(t, x) \leq c_1e^{-\delta t} \quad \text{for } t \geq 0 \text{ and } x \in [g(t), h(t)].
$$
Therefore, by \eqref{A}, we have
\begin{align*}
h'(t)&= \mu \int_{g(t)}^{h(t)}\int_{h(t)}^{+\infty}J_1(x-y) u(t,x) \rd y \rd x + \mu\rho \int_{g(t)}^{h(t)} v(t,x) W(h(t)-x) \rd x \\
&\leq\mu c_1e^{-\delta t}(h(t)-g(t))+\mu\rho \|W\|_\infty c_1e^{-\delta t}(h(t)-g(t))\\
&\leq c_2 e^{-\delta t}(h(t)-g(t)),
\end{align*}
where the constant $c_2>0$ is independent of $t$. Analogously we have
$$
-g'(t)\leq c_2 e^{-\delta t}(h(t)-g(t)).
$$
Thus, we immediately obtain
$$
(h(t)-g(t))'\leq 2c_2 e^{-\delta t}(h(t)-g(t))\quad \text{for}~t\geq 0,
$$
which leads to
$$
h(t)-g(t)\leq c_3 e^{-\frac{c_4}{\delta}e^{-\delta t}}\quad \text{for}~t\geq 0
$$
with constants $c_3,c_4>0$ independent of $t$. Letting $t \to \infty$, we have
$$
h_\infty-g_\infty\leq c_3<\infty.
$$
The  proof is complete.\qed

\begin{lemma}\label{R0-1}
If $\mathcal{R}_0 =1$, then $h_\infty-g_\infty<\infty$ provided one of the following holds:
\begin{itemize}
\item[(i)] $W(x)\leq \eta W_{J_2}(x)$ for all $x\geq 0$ and some constant $\eta>0$,  or
\item[(ii)] $g(u)$ in \eqref{R0=1} satisfies $g(u)\geq \sigma u^\lambda$ for some $\sigma>0,\ \lambda\in (0,1)$ and all small $u>0$.
\end{itemize}
\end{lemma}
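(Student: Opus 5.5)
The plan is to handle the two alternatives separately. Case (i) uses an $L^1$ energy identity that exploits the structure of the free boundary condition. Case (ii) instead uses an algebraic decay rate for the ODE upper solution \eqref{d6}, followed by a Gronwall argument as in the proof of Lemma \ref{666}.

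\emph{Case (i).} Set $\alpha:=G'(0)/a$; since $\mathcal R_0=1$ we have $\alpha e=b$. Consider $E(t):=\int_{g(t)}^{h(t)}[\alpha u(t,x)+v(t,x)]\,\rd x$. Because $u=v=0$ at $x=g(t),h(t)$, differentiating produces no boundary terms. Fubini and the symmetry of $J_i$ give
\[
\int_{g}^{h}\Big(\int_{g}^{h}J_i(x-y)w(y)\,\rd y-w(x)\Big)\rd x=-\int_g^h w(x)\big[W_{J_i}(h-x)+W_{J_i}(x-g)\big]\rd x .
\]
Hence, using $\alpha e=b$ and \eqref{R0=1},
\[
E'(t)=-\alpha d_1\!\int_g^h\! u\,[W_{J_1}(h-x)+W_{J_1}(x-g)]\,\rd x-d_2\!\int_g^h\! v\,[W_{J_2}(h-x)+W_{J_2}(x-g)]\,\rd x-\int_g^h g(u)u\,\rd x .
\]
The last term is $\le 0$. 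On the other hand, by the free boundary conditions and the hypothesis $W\le\eta W_{J_2}$,
\[
h'(t)-g'(t)\le\mu\int_g^h u\,[W_{J_1}(h-x)+W_{J_1}(x-g)]\,\rd x+\mu\rho\eta\int_g^h v\,[W_{J_2}(h-x)+W_{J_2}(x-g)]\,\rd x .
\]
Together these give $h'-g'\le -C E'$ with $C:=\max\{\mu/(\alpha d_1),\mu\rho\eta/d_2\}$. Therefore $h(t)-g(t)+CE(t)$ is nonincreasing. Since $E\ge0$, we get $h(t)-g(t)\le 2h_0+CE(0)$, so $h_\infty-g_\infty<\infty$.

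\emph{Case (ii).} Let $(\bar u,\bar v)$ solve \eqref{d6}. By \eqref{777} we have $u\le\bar u$ and $v\le\bar v$. Exactly as in the proof of Lemma \ref{666},
\[
(h-g)'\le C(\bar u+\bar v)(h-g),
\]
so it suffices to prove $\int_0^\infty(\bar u+\bar v)\,\rd t<\infty$. Take $V=\alpha\bar u+\bar v$ as in the proof of Lemma \ref{d5}. Then $V'=G(\bar u)-G'(0)\bar u=-g(\bar u)\bar u\le-\sigma\bar u^{1+\lambda}$ once $\bar u$ is small, which holds for large $t$ by Lemma \ref{d5}.

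The key step is a comparison $\bar u(t)\ge cV(t)$ for large $t$, which I would obtain as follows. Put $w:=e\bar v-a\bar u$. Then
\[
w'=-(a+b)w-\bar u\,\big(ab/e-G(\bar u)/\bar u\big)\cdot e ,
\]
where the last term is $O(g(\bar u)\bar u)$, hence of higher order. Because $-(a+b)<0$ is a strict contraction rate, a variation-of-constants estimate gives $|w(t)|\le Ce^{-(a+b)(t-t_0)}+o(\sup_{[t_0,t]}\bar u)$. Also $V$ decays no faster than some algebraic rate, since $V'\ge -C V^{1+\lambda}$ when $\bar u\lesssim V$. Combining these, $\bar v\le (a/e+\epsilon)\bar u+Ce^{-(a+b)t}$, and hence $\bar u\ge cV$ for large $t$; this is the statement that the trajectory approaches the origin along the center direction $v=(a/e)u$. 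Then $V'\le-c'V^{1+\lambda}$, which gives $V(t)\le Ct^{-1/\lambda}$. Since $\lambda\in(0,1)$, $1/\lambda>1$, so $\bar u+\bar v\le C'V$ is integrable on $(0,\infty)$. Gronwall then bounds $h-g$.

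I expect this center-manifold-type comparison $\bar u\gtrsim V$ to be the main obstacle. If the direct estimate on $w$ is awkward, I would instead use a sector argument: the cone $\{\,|e\bar v-a\bar u|\le\epsilon\bar u\,\}$ is forward invariant near the origin, because the linearized flow contracts $w$ at rate $a+b$. The trajectory enters this cone in finite time, since $w$ decays exponentially while $\bar u$ cannot decay exponentially. The last claim follows because $V$ is nonincreasing and $V'$ is of higher order than $V$.
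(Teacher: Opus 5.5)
Case (i) is correct. It differs from the paper only in being self-contained. The paper compares \eqref{A} with the problem in which $\rho W$ is replaced by $\rho\eta W_{J_2}$, then quotes Lemma 4.2 of \cite{fb31}. Your identity for $\frac{d}{dt}\int_{g}^{h}(\alpha u+v)\,dx$, together with $W\le \eta W_{J_2}$, shows directly that $h-g+CE$ is nonincreasing, which is essentially the mechanism behind the cited lemma.

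In case (ii) the route works, but the justification you give for the key step $\bar u\ge cV$ contains false claims. The step itself is true and in fact easy.

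\emph{Comparison with the paper.} You make the same reduction as the paper, to $\int_0^\infty(\bar u+\bar v)\,dt<\infty$, and use the same variables. Your $w=e\bar v-a\bar u$ is exactly $\bar u'$, which is the paper's $U'$. Your Lyapunov function satisfies $eV=b\bar u+e\bar v$, which is the paper's $w$, since $\alpha=b/e$. The paper proves the strong bound $U'<-\beta_n g(U)U$ for large $t$ by a phase-plane slope comparison. You only need $\bar u\ge cV$, and then $V'=-g(\bar u)\bar u$ gives $V=O(t^{-1/\lambda})$.

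\emph{What fails in your justification.}
First, $V'\ge -CV^{1+\lambda}$ would need an upper bound $g(u)\le Cu^{\lambda}$, and (ii) only gives a lower bound. For example, $g(u)=|\ln u|^{-1}$ near $0$ satisfies (ii) and makes $V$ decay faster than every power of $t$.
Second, $|w|$ need not decay exponentially: along the center direction, $w=\bar u'$ is of order $g(\bar u)\bar u$. So the entry claim in your sector argument is wrong as stated.
Third, the passage from $o(\sup_{[t_0,t]}\bar u)$ to $\epsilon\,\bar u(t)$ is not justified.

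\emph{The fix.} None of this asymptotic analysis is needed, because the forcing term has a sign. From $w'=-(a+b)w-eg(\bar u)\bar u$ and $g(\bar u)\bar u\ge 0$ you get $(e^{(a+b)t}w)'\le 0$. Since $\bar u>0$ and $\bar u\to 0$, there is some $t_0$ with $w(t_0)=\bar u'(t_0)<0$. Hence $w<0$ on $[t_0,\infty)$, that is, $e\bar v\le a\bar u$ there. Therefore $V=\frac{b}{e}\bar u+\bar v\le \frac{a+b}{e}\bar u$ for $t\ge t_0$.

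With this replacement your argument for (ii) is complete. It is considerably shorter than the paper's and gives the same rate $\bar u=O(t^{-1/\lambda})$.
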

\noindent{\bf{Proof}}.
If (i) is satisfied, then the conclusion follows directly from a comparison argument and Lemma 4.2 of \cite{fb31}.

Suppose now (ii) holds and let \((\bar{u}(t), \bar{v}(t))\) be the solution of  \eqref{d6}. From the proof of Lemma \ref{d5} we know that
\[
(\bar{u}(t), \bar{v}(t)) \rightarrow (0,0) \quad \text{as } t \rightarrow \infty.
\]
We claim that (ii) implies
\begin{equation}\label{int-finite}
\int_0^\infty \bar u(t)dt<\infty,\ \int_0^\infty \bar v(t)dt<\infty.
\end{equation}
Before giving the proof of \eqref{int-finite}, let us see how it leads to the desired conclusion of the lemma.
It  follows from \eqref{777} and \eqref{A} that
\begin{align*}
h'(t)&= \mu \int_{g(t)}^{h(t)}\int_{h(t)}^{+\infty}J_1(x-y) u(t,x) \rd y \rd x + \mu\rho \int_{g(t)}^{h(t)} v(t,x) W(h(t)-x) \rd x \\
&\leq\mu \bar u(t)(h(t)-g(t))+\mu\rho \|W\|_\infty \bar v(t)(h(t)-g(t))\\
&\leq c_2 [\bar u(t)+\bar v(t)](h(t)-g(t)),
\end{align*}
where the constant $c_2>0$ is independent of $t$. Analogously we have
$$
-g'(t)\leq c_2 [\bar u(t)+\bar v(t)](h(t)-g(t)).
$$
Hence
$$
(h(t)-g(t))'\leq 2c_2[\bar u(t)+\bar v(t)](h(t)-g(t))\quad \text{for}~t\geq 0.
$$
It follows that
\[
\ln\Big(\frac{h(t)-g(t)}{2h_0}\Big)\leq 2c_2\int_0^t[\bar u(s)+\bar v(s)]ds<2c_2\int_0^\infty[\bar u(s)+\bar v(s)]ds<\infty \mbox{ for all } t>0,
\]
which clearly implies the desired conclusion.

It remains to prove \eqref{int-finite}. The assumption $\mathcal R_0=1$ and (ii) allow us to write
\[
\begin{cases} \bar u'=-a\bar u+e\bar v,\\
\bar v'=-b\bar v+\frac{ab}e \bar v-g(\bar u)\bar u,
\end{cases} \ \ \  t>0.
\]
Let $w:=b\bar u+e\bar v$; we easily obtain
\[
\bar u'=-(a+b)\bar u+w,\ w'=-eg(\bar u)\bar u,
\]
and so
\[
\bar u''+(a+b)\bar u'+eg(\bar u)\bar u=0.
\]
Hence $(U,V)=(\bar u, \bar u')=(U, U')$ satisfies
\[
U'=V,\ V'=-(a+b)V-eg(U)U.
\]
Moreover, we know $(U(t), V(t))\to (0, 0)$ as $t\to\infty$, and $U(t)>0$ for $t>0$.
We now investigate how the solution curve $t\to (U(t), V(t))$ approaches $(0,0)$ in the $UV$-plane via a phase plane analysis.

Note that $U(t)>0$ and $U(t)\to 0$ as $t\to\infty$ imply the existence of a positive sequence $t_n\to\infty$ such that
$V(t_n)=U'(t_n)<0$ for all $n\geq 1$. For any $\beta>0$ and $n\geq 1$ we consider the curve $\ell^\beta_n$ given by
\[
\ell^\beta_n:=\{(U,V)\in\mathbb R^2: V=-\beta g(U)U,\ \  U\in [0,\, U(t_n)]\}.
\]
Our assumption on $g(u)$ implies that $\ell_n^\beta$ is a monotone curve lying below the $U$-axis connecting the two points  $(0,0)$ and $P_n:=(U(t_n), -\beta g(U(t_n))U(t_n))$.

Define
\[\begin{cases}
\ell_n:=\{(U,V)\in \mathbb R^2: U=U(t), V=V(t),\ t\geq t_n\},\\
 Q_n:=(U(t_n), V(t_n)),\
\beta_n:=\min\Big\{\frac{-V(t_n)}{2g(U(t_n))U(t_n)}, \frac 1n\Big\}.\\
\end{cases}
\]
Clearly $\ell_n$ is a piece of the solution curve $t\to (U(t), V(t))$ that connects $(0,0)$ and $Q_n$. It is easily seen from our choice of $\beta_n$  that $Q_n$ is below $P_n$.
For $t>t_n$ but close to $t_n$, $U'(t)=V(t)<0$ and so the point $(U(t), V(t))\in \ell_n$ moves leftward from $Q_n$ but stays below $\tilde \ell_n$ as $t$ is increased but close to $t_n$. We show below that for sufficiently large $n$,  $(U(t), V(t))$ stays below $\tilde\ell_n:=\ell_n^{\beta_n}$ in the $UV$-plane for all $t>t_n$.  Indeed, as the point $(U(t), V(t))$ moves leftward along $\ell_n$ with increasing $t>t_n$,  either it  stays below $\tilde \ell_n$ all the way
till it reaches $(0,0)$
as $t\to\infty$, or there is a first time moment $t_n^*>t_n$ such that $\ell_n$ intersects $\tilde \ell_n$ at $P_n^*:=(U(t_n^*), V(t_n^*))$, where $\ell_n$ has slop
\[
{\rm slope}[\ell_n](P_n^*)=\frac{V'(t_n^*)}{U'(t_n^*)}=-(a+b)-\frac{eg(U(t_n^*))U(t_n^*)}{V(t_n^*)}=-(a+b)+\frac{e}\beta_n.
\]
On the other hand, due to $0<U(t_n^*)<U(t_n)$ and $g'(u)u=\frac{ab}e-g(u)-G'(u)\to \frac{ab}e-G'(0)=0$ as $u\to 0$,  we have
\[
{\rm slope}[\tilde \ell_n](P_n^*)=-\beta_n[g'(U(t_n^*))U(t_n^*)+g(U(t_n^*))]=o(1) \mbox{ as } n\to\infty.
\]
 The fact that $ \ell_n$ intersects $\tilde \ell_n$ at $P_n^*$ from below when moved leftward along $\ell_n$ implies that
\[
{\rm slope}[\tilde \ell_n](P_n^*)\geq {\rm slope}[\ell_n](P_n^*).
\]
It follows that
\[
 o(1)\geq -(a+b)+\frac{e}{\beta_n} \mbox{ as } n\to\infty.
\]
But this is impossible since $\frac e{\beta_n}\to +\infty$ as $n\to\infty$.  This contradiction shows that $ \ell_n$ lies below $\tilde \ell_n$ for all large $n$. Fix such a large $n$ so that (ii) holds for $u\in (0, U(t_n)]$.  Then
\[
V(t)< -\beta_n g(U(t))U(t)  \mbox{ for } t\geq  t_n,
\]
and by (ii) we obtain,
\[
U'=V< -\beta_n \sigma U^{1+\lambda} \mbox{ for } t\geq  t_n.
\]
It follows that
\[
(U^{-\lambda})'> (1+\lambda)\beta_n\sigma \mbox{ for } t\geq  t_n,
\]
which yields
\[
U(t)< \Big[U^{-\lambda}( t_n)+(1+\lambda)\beta_n\sigma (t- t_n)\Big]^{-1/\lambda} \mbox{ for } t> t_n.
\]
Therefore $\bar u(t)=U(t)$ is integrable over $[0,\infty)$.
From $\bar u'=-a\bar u+e\bar v$ we obtain
\[
e\int_0^t\bar v(s)ds=\bar u(t)-\bar u(0)+a\int_0^t\bar u(s)ds\to -\bar u(0)+a\int_0^\infty \bar u(s)ds \mbox{ as } t\to\infty.
\]
So $\bar v(t)$ is also integrable and
\eqref{int-finite} is  proved.
\qed

\begin{lemma}\label{d9}
If $\mathcal{R}_0 \geq \left(1 + \frac{d_1}{a}\right)\left(1 + \frac{d_2}{b}\right)$, then spreading always happens for \eqref{A}.
\end{lemma}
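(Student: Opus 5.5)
The plan is a contradiction argument that reduces everything to the dichotomy already established. First note that $\left(1+\frac{d_1}{a}\right)\left(1+\frac{d_2}{b}\right)>1$, so the hypothesis gives $\mathcal R_0>1$. Hence Lemma \ref{d7} applies: $h_\infty<\infty$ if and only if $-g_\infty<\infty$. Suppose spreading fails. Then both $h_\infty$ and $-g_\infty$ are finite, so $h_\infty-g_\infty<\infty$. Lemma \ref{d3} then yields
\[
\lambda_p(g_\infty,h_\infty)\ge 0 .
\]
On the other hand, Proposition \ref{c3}(iii) states that under $\mathcal R_0\ge\left(1+\frac{d_1}{a}\right)\left(1+\frac{d_2}{b}\right)$ we have $\lambda_p(L_1,L_2)<0$ for every finite interval $L_1<L_2$. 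In particular $\lambda_p(g_\infty,h_\infty)<0$, which is a contradiction. Therefore $h_\infty=-g_\infty=\infty$.

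With the interval unbounded in both directions, Lemma \ref{d8} gives $(u,v)\to(u^*,v^*)$ locally uniformly in $\mathbb R$. This is exactly case (ii) of Theorem \ref{T12}, i.e. spreading.

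The only genuinely analytic ingredient is Proposition \ref{c3}(iii), which is quoted from \cite{fb4,fb1}. If one wanted to verify it directly, the natural step is a test-function computation in \eqref{eigen-define}. Take constant $\phi_1,\phi_2$ on $[L_1,L_2]$ and use $\int_{L_1}^{L_2}J_i(x-y)\,\rd y\ge0$. This bounds $\lambda_p$ above by the largest eigenvalue of minus the matrix
\[
\begin{pmatrix}
-\frac{a+d_1}{e} & 1\\[2pt]
1 & -\frac{b+d_2}{G'(0)}
\end{pmatrix}.
\]
That eigenvalue is $\le0$ precisely when $\frac{(a+d_1)(b+d_2)}{eG'(0)}\le 1$, which is the stated threshold. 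Strictness in the equality case should come from the fact that $\int_{L_1}^{L_2}J_i(x-y)\,\rd y>0$ on a set of positive measure.

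I expect this strict-inequality step to be the main obstacle if it is done by hand. Since Proposition \ref{c3} is available, however, the proof of Lemma \ref{d9} itself is a short combination of Lemmas \ref{d7}, \ref{d3}, \ref{d8} and Proposition \ref{c3}(iii).
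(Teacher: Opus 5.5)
Your proposal is correct and matches the paper's proof, which simply combines Proposition \ref{c3}(iii) with Lemma \ref{d3}; your explicit use of Lemmas \ref{d7} and \ref{d8} just spells out the dichotomy step that the paper leaves implicit. One slip in your optional side computation: the constant test vector bounds $\lambda_p$ by the \emph{smallest} eigenvalue of minus that matrix, not the largest (the largest is always positive, since the trace is positive); and strictness in the equality case is immediate, because the dropped terms $\frac{d_1}{e}\phi_1^2\int_{L_1}^{L_2}\int_{L_1}^{L_2}J_1(x-y)\,\rd y\,\rd x+\frac{d_2}{G'(0)}\phi_2^2\int_{L_1}^{L_2}\int_{L_1}^{L_2}J_2(x-y)\,\rd y\,\rd x$ are strictly positive for any nonzero constant vector, as $J_i(0)>0$.
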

\noindent{\bf{Proof}}. By Proposition \ref{c3}, we see $\lambda_{p}(L_1, L_2) < 0$ for any finite interval $[L_1, L_2]$. The desired conclusion then follows from Lemma \ref{d3}. \qed

\begin{lemma}\label{e1}
Suppose that $1 < \mathcal{R}_0 < \left(1 + \frac{d_1}{a}\right)\left(1 + \frac{d_2}{b}\right)$ and $h_0 \geq L^*$ hold. Then spreading always happens for \eqref{A}.
\end{lemma}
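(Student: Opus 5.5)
We argue by contradiction, reducing everything to Lemma \ref{d3} together with Proposition \ref{c3}. Suppose spreading does not happen. Since $\mathcal R_0>1$, Lemma \ref{d7} shows that $h_\infty<\infty$ if and only if $-g_\infty<\infty$. Hence failure of spreading means $h_\infty-g_\infty<\infty$. Lemma \ref{d3} then gives
\[
\lambda_p(g_\infty,h_\infty)\ge 0 .
\]

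Next we use the strict monotonicity of the free boundaries. By Theorem \ref{B2}, $h$ is strictly increasing and $g$ is strictly decreasing. Therefore $h_\infty>h_0$ and $g_\infty<-h_0$, so
\[
h_\infty-g_\infty>2h_0\ge 2L^* .
\]
By Proposition \ref{c3}(iv), under $1<\mathcal R_0<(1+\frac{d_1}{a})(1+\frac{d_2}{b})$ we have $\lambda_p(L_1,L_2)<0$ whenever $L_2-L_1>2L^*$. It follows that $\lambda_p(g_\infty,h_\infty)<0$, which contradicts the previous paragraph. Hence $h_\infty-g_\infty=\infty$, and Lemma \ref{d7} then gives $h_\infty=-g_\infty=\infty$.

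Finally, Lemma \ref{d8} yields $(u,v)\to(u^*,v^*)$ locally uniformly in $\mathbb R$, so spreading happens.

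The only delicate point is the borderline case $h_0=L^*$. There the initial interval $[-h_0,h_0]$ itself has $\lambda_p=0$, so the argument relies on the strict inequality $h_\infty-g_\infty>2h_0$. This holds because $h'(t)>0$ for $t>0$, which follows from the positivity of $u$ and $v$ (Lemma \ref{vv}) together with $J_1(0)>0$ in the free boundary condition. Combined with the strict monotonicity of $\lambda_p$ in the length of the interval (Proposition \ref{c3}(i)), this settles the borderline case. No other step presents an obstacle.
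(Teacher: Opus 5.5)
Your proof is correct and follows the paper's argument. You assume $h_\infty-g_\infty<\infty$, get $\lambda_p(g_\infty,h_\infty)\ge 0$ from Lemma~\ref{d3}, and contradict this with Proposition~\ref{c3}, since the limiting interval has length greater than $2L^*$. Your use of the strict inequality $h_\infty-g_\infty>2h_0$ is exactly what the borderline case $h_0=L^*$ requires. The paper writes only $h_\infty-g_\infty\ge 2h_0$ there, but it implicitly relies on $h_\infty\in(h_0,\infty]$ and $g_\infty\in[-\infty,-h_0)$.
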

\noindent{\bf{Proof}}. Otherwise, we have $h_\infty - g_\infty < +\infty$. Since $h_\infty - g_\infty \geq 2h_0 \geq 2L^*$,  it follows from Proposition~\ref{c3} that $\lambda_{p}(g_\infty, h_\infty) < 0$. However,  Lemma \ref{d3}  shows $\lambda_{p}(g_\infty, h_\infty) \geq 0$, leading to a contradiction. Hence spreading always occurs for problem~\eqref{A} when \( h_0 \geq L^* \).  \qed

\begin{lemma}\label{e2}
Suppose that $1<\mathcal{R}_0<\left(1+\frac{d_1}{a}\right)\left(1+\frac{d_2}{b}\right)$ and $h_0<L^*$ hold. Then there exists $\underline{\mu}>0$ depending on $(u_0, v_0)$ such that  vanishing happens for \eqref{A} if $0<\mu \leq \underline{\mu}$.
\end{lemma}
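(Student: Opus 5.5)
We construct an explicit upper solution $(\bar u,\bar v,\bar g,\bar h)$ and apply Lemma \ref{mm}. Since $h_0<L^*$, Proposition \ref{c3}(iv) lets us fix $h_1\in(h_0,L^*)$ with $\lambda_1:=\lambda_p(-h_1,h_1)>0$. Let $(\phi_1,\phi_2)$ be the positive eigenfunction pair of \eqref{c4} on $[-h_1,h_1]$; it is continuous and bounded below by a positive constant on $[-h_1,h_1]$.

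We would take the ansatz
\[
\bar h(t)=h_0+(h_1-h_0)(1-e^{-\delta t}),\quad \bar g(t)=-\bar h(t),\quad (\bar u,\bar v)=Me^{-\delta t}(\phi_1,\phi_2),
\]
with $0<\delta<\lambda_1\min\{e,G'(0)\}$ and $M$ large. For $x\in(\bar g(t),\bar h(t))\subset[-h_1,h_1]$ we have $\int_{\bar g}^{\bar h}J_i\phi_i\le\int_{-h_1}^{h_1}J_i\phi_i$. Together with \eqref{c4} this gives
\[
d_1\int_{\bar g}^{\bar h}J_1\bar u-(d_1+a)\bar u+e\bar v\le -\lambda_1 e\bar u\le -\delta\bar u=\bar u_t .
\]
For the $v$-equation, {\bf(G1)-(G2)} give $G(\bar u)\le G'(0)\bar u$, so
\[
d_2\int J_2\bar v-(d_2+b)\bar v+G(\bar u)\le -\lambda_1G'(0)\bar v\le\bar v_t .
\]
The boundary values are $\bar u,\bar v>0\ge0$ at $x=\bar g(t),\bar h(t)$. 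The initial inequalities $\bar u(0,\cdot)\ge u_0$ and $\bar v(0,\cdot)\ge v_0$ on $[-h_0,h_0]$ hold once
\[
M\ge\max\{\|u_0\|_\infty/\min\phi_1,\ \|v_0\|_\infty/\min\phi_2\}.
\]
This choice of $M$ depends on $(u_0,v_0)$, which is why $\underline\mu$ will depend on them. Also $\bar h(0)=h_0$ and $\bar g(0)=-h_0$.

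It remains to check the free boundary inequality, which is where $\mu$ enters. Using $\int_{\bar h}^\infty J_1(x-y)\rd y\le1$ and $W\le\|W\|_\infty$ (from {\bf(W)}), the right side of the $\bar h'$ condition is at most
\[
\mu Me^{-\delta t}\,2h_1\big(\|\phi_1\|_\infty+\rho\|W\|_\infty\|\phi_2\|_\infty\big)=:\mu C_0Me^{-\delta t},
\]
while $\bar h'(t)=\delta(h_1-h_0)e^{-\delta t}$. Hence the condition holds whenever
\[
\mu\le\underline\mu:=\frac{\delta(h_1-h_0)}{C_0M}.
\]
The $\bar g$ inequality follows by symmetry, since the bound is the same with $W(x-\bar g)$.

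Lemma \ref{mm} then gives $h(t)\le\bar h(t)<h_1$ and $g(t)\ge-h_1$ for all $t$, so $h_\infty-g_\infty\le 2h_1<\infty$. By Lemma \ref{d3} this is vanishing, and for $\mu<\underline\mu$ it also follows from Corollary \ref{nn}. There is no genuine obstacle; the points needing care are that Lemma \ref{mm} is stated on finite $[0,T]$, so we apply it for every $T$, and that the eigenfunction inequality must survive restriction of the integral domain to the smaller interval $[\bar g,\bar h]$, which it does because $J_i,\phi_i\ge0$.
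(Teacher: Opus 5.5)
Your proof is correct and follows the paper's argument: the same exponentially decaying eigenfunction upper solution on $[-h_1,h_1]$ with $\bar h(t)=h_0+(h_1-h_0)(1-e^{-\delta t})$, compared with the solution via Lemma \ref{mm}. The only difference is bookkeeping. The paper takes amplitude $\mu^{-1}M$, fixes $M$ by the free boundary inequality, and reads off $\underline\mu$ from the initial-data condition. You instead fix $M$ from the initial data and read off $\underline\mu$ from the free boundary inequality, which is equivalent.
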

\noindent{\bf Proof.} We prove this lemma by constructing an appropriate upper solution. Let $\lambda_p(h_0)$ be the principal eigenvalue of \eqref{c4} with $[L_1, L_2]$ replaced by $[-h_0, h_0]$. Since $h_0<L^*$, we have $\lambda_p(h_0) > 0$. By Proposition \ref{c3}, $\lambda_p(-L,L) $ is continuous in $L$ and hence there exists $h_1>h_0$ but close to $h_0$ such that $\lambda_p(h_1) > 0$. Let $(\phi_1, \phi_2)$ be a positive  eigenfunction pair corresponding to $\lambda_p(h_1)$,
\[
\Lambda :=\frac{\lambda_p(h_1)}{2} \min\{e,G'(0)\}, ~ C := h_1 - h_0,\]
\[ M:= C \Lambda  \left[ \int_{-h_1}^{h_1} \phi_1(x) \rd x + \rho \max_{x\in[0,2h_1]} W(x) \int_{-h_1}^{h_1} \phi_2(x) \rd x \right]^{-1},
\]
and for  $t \geq 0$ and $x \in [-h_1, h_1]$, define
\[\begin{cases}
\bar{h}(t) := h_0 + C(1 - e^{-\Lambda t}), \quad \bar{g}(t): = -\bar{h}(t),
\\
\bar{u}(t, x) := \mu^{-1} Me^{-\Lambda t} \phi_1(x), \quad \bar{v}(t, x) := \mu^{-1}M e^{-\Lambda t} \phi_2(x).
\end{cases}
\]
 We next show that $(\bar{u}, \bar{v}, \bar{g}, \bar{h})$ is an upper solution of \eqref{A} when $\mu>0$ is small enough. Obviously,
\[
-\bar{g}(0) = \bar{h}(0) = h_0, \quad [\bar{g}(t), \bar{h}(t)] \subset [-h_1, h_1] \quad \text{for } t \geq 0,
\]
and
\[
\bar{u}(t, x) > 0, \quad \bar{v}(t, x) > 0 \quad \text{for } x = \bar{g}(t) \text{ or } \bar{h}(t), \quad t > 0.
\]
By direct computations, we have
\begin{align*}
&\bar{u}_t - d_1 \int_{\bar{g}(t)}^{\bar{h}(t)} J_1(x-y) \bar{u}(t, y) \rd y + d_1 \bar{u}(t, x) + a \bar{u}(t, x) - e \bar{v}(t, x) \\
&\ge\bar{u}_t - d_1 \int_{-h_1}^{h_1} J_1(x-y) \bar{u}(t, y) \rd y + d_1 \bar{u}(t, x) + a \bar{u}(t, x) - e \bar{v}(t, x) \\
&= \mu^{-1}M e^{-\Lambda t} \phi_1(x) \left[ \lambda_p(h_1)e - \Lambda \right] \geq 0 \quad \text{for } t > 0 \text{ and } x \in (\bar{g}(t), \bar{h}(t)).
\end{align*}
Similarly, we have
\begin{align*}
&\bar{v}_t - d_2 \int_{\bar{g}(t)}^{\bar{h}(t)} J_2(x-y) \bar{v}(t, y) \rd y + d_2 \bar{v}(t, x) + b \bar{v}(t, x) -G(\bar{u}(t, x)) \\
&\geq \mu^{-1}M e^{-\Lambda t} \phi_2(x) \left[ \lambda_p(h_1)G'(0) - \Lambda \right] \geq 0 \quad \text{for } t > 0 \text{ and } x \in (\bar{g}(t), \bar{h}(t)),
\end{align*}
where we have used $G(\bar{u}) \leq G'(0) \bar{u}$ due to \textbf{(G2)}.

Moreover, we have, by the definition of $M$,
\begin{align*}
&\mu \int_{\bar{g}(t)}^{\bar{h}(t)} \int_{\bar{h}(t)}^{+\infty} J_1(x-y) \bar{u}(t,x) \rd y  \rd x + \mu \rho \int_{\bar{g}(t)}^{\bar{h}(t)} \bar{v}(t,x) W(\bar{h}(t)-x)  \rd x \\
&\leq \mu \int_{-h_1}^{h_1} \bar{u}(t,x)  \rd x + \mu \rho \max_{x \in [0,2h_1]} W(x) \int_{\bar{g}(t)}^{\bar{h}(t)} \bar{v}(t,x)  \rd x \\
&\leq M e^{-\Lambda t} \left( \int_{-h_1}^{h_1} \phi_1(x)  \rd x + \rho \max_{x \in [0,2h_1]} W(x) \int_{-h_1}^{h_1} \phi_2(x)  \rd x \right) \\
&= C\Lambda e^{-\Lambda t} = \bar{h}'(t) \quad \text{for } t > 0.
\end{align*}
Analogously,
\begin{align*}
&\bar{g}'(t) \leq -\mu \int_{\bar{g}(t)}^{\bar{h}(t)} \int_{-\infty}^{\bar{g}(t)} J_1(x-y) \bar{u}(t,x)  \rd y  \rd x - \mu \rho \int_{\bar{g}(t)}^{\bar{h}(t)} \bar{v}(t,x) W(x-\bar{g}(t))  \rd x \quad \text{for } t > 0.
\end{align*}
Moreover, for any given admissible initial datum $(u_0, v_0)$,
\begin{align}\label{e3}
\|u_0\|_{C([-h_0,h_0])} + \|v_0\|_{C([-h_0,h_0])} \leq \mu^{-1}M \min \left\{ \min_{x \in [-h_0,h_0]} \phi_1(x), \min_{x \in [-h_0,h_0]} \phi_2(x) \right\}
\end{align}
provided that
 $0 < \mu \leq \underline{\mu}$ with
\begin{align*}
\underline{\mu} := &M \min\left\{\min_{x \in [-h_0,h_0]} \phi_1(x), \min_{x \in [-h_0,h_0]} \phi_2(x) \right\}\left( \|u_0\|_{C([-h_0,h_0])} + \|v_0\|_{C([-h_0,h_0])}  \right)^{-1}.
\end{align*}
Therefore, $0 < \mu \leq \underline{\mu}$ implies
\[
u_0(x) \leq \mu^{-1}M \phi_1 = \bar{u}(0,x), \quad v_0(x) \leq \mu^{-1}M \phi_2(x) = \bar{v}(0,x) \quad \text{for } x \in [-h_0, h_0].
\]
Summarizing the above discussions, we can use Lemma \ref{mm} to obtain
\[
u(t,x) \leq \bar{u}(t,x), ~ v(t,x) \leq \bar{v}(t,x), ~ \bar{g}(t) \leq g(t) \text{ and } h(t) \leq \bar{h}(t)
\]
for $t > 0$ and $x \in [g(t), h(t)]$. Therefore,
\[
h_\infty - g_\infty \leq \lim_{t \to \infty} (\bar{h}(t) - \bar{g}(t)) = 2h_1 < +\infty,
\]
and hence vanishing happens. \qed

\begin{remark}\label{88}
\textnormal{From the proof of Lemma \ref{e2}, we can see that for any fixed $\mu > 0$, a sufficiently small initial function pair $(u_0, v_0)$ still guarantees that \eqref{e3} holds. In other words, if $1<\mathcal{R}_0<\left(1+\frac{d_1}{a}\right)\left(1+\frac{d_2}{b}\right)$ and $h_0<L^*$, then for any fixed $\mu > 0$ and sufficiently small initial datum $(u_0, v_0)$,  vanishing always occurs.}
\end{remark}

\begin{lemma}\label{e6}
Suppose that $1 < \mathcal{R}_0 < (1 + \frac{d_1}{a})(1 + \frac{d_2}{b})$ and $h_0 < L^*$ hold. Then there exists $\bar{\mu}> 0$ depending on $(u_0, v_0)$ such that spreading happens for \eqref{A} if $\mu > \bar{\mu}$.
\end{lemma}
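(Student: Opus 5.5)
We argue by contradiction, following the standard strategy of Cao et al. and Chang–Du. Suppose that for every $\bar\mu>0$ there is some $\mu>\bar\mu$ for which vanishing happens. By Corollary \ref{nn}, if spreading happens for $\mu_1$ then it happens for all $\mu_2\geq\mu_1$. So it suffices to show that vanishing cannot occur once $\mu$ is large. Assume, then, that vanishing occurs for a sequence $\mu_n\to\infty$. By Lemma \ref{d3} and Proposition \ref{c3}(iv) we have $h_\infty^{\mu_n}-g_\infty^{\mu_n}\le 2L^*$ for every $n$, and hence $h^{\mu_n}(t)-g^{\mu_n}(t)\le 2L^*$ for all $t>0$.

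We next obtain a lower bound on $(u^{\mu_n},v^{\mu_n})$ that does not depend on $n$. Since $[g^{\mu_n}(t),h^{\mu_n}(t)]\supset[-h_0,h_0]$, the comparison principle Lemma \ref{vv} (or Lemma \ref{hh}) gives
\[
u^{\mu_n}\ge \underline u,\qquad v^{\mu_n}\ge \underline v \quad\text{on }[0,\infty)\times[-h_0,h_0],
\]
where $(\underline u,\underline v)$ solves the fixed boundary problem \eqref{m0} on $[-h_0,h_0]$ with data $(u_0,v_0)$. In particular, on the time interval $[0,1]$ the crude estimates \eqref{a5} give
\[
u^{\mu_n}\ge e^{-(d_1+a)}u_0,\qquad v^{\mu_n}\ge e^{-(d_2+b)}v_0 \quad\text{on }[0,1]\times[-h_0,h_0],
\]
uniformly in $n$.

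Now we integrate the free boundary equations. Adding the equations for $h'$ and $-g'$ and integrating over $[0,1]$ yields
\[
2L^*\ \ge\ h^{\mu_n}(1)-g^{\mu_n}(1)\ \ge\ 2h_0+\mu_n\int_0^1 \Phi_n(t)\,\rd t,
\]
where
\[
\Phi_n(t)=\int_{g(t)}^{h(t)}\Big[\int_{h(t)}^\infty J_1(x-y)\,\rd y+\int_{-\infty}^{g(t)}J_1(x-y)\,\rd y\Big]u^{\mu_n}\,\rd x+\rho\int_{g(t)}^{h(t)}\big[W(h(t)-x)+W(x-g(t))\big]v^{\mu_n}\,\rd x,
\]
with $g=g^{\mu_n}$ and $h=h^{\mu_n}$. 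We need $\Phi_n(t)\ge c_0>0$ with $c_0$ independent of $n$. Because the interval stays inside $[-L^*-h_0,\,L^*+h_0]$, which is bounded, and $u^{\mu_n}\ge e^{-(d_1+a)}u_0$ on $[-h_0,h_0]$, the $u$-flux term is bounded below by
\[
e^{-(d_1+a)}\int_{-h_0}^{h_0}u_0(x)\,\Big[\int_{L^*+h_0}^\infty J_1(x-y)\,\rd y\Big]\rd x .
\]
This quantity is positive provided $J_1$ has unbounded support. If $J_1$ has compact support, this lower bound degenerates, and instead we use a window near the boundary as in the proof of Lemma \ref{a3}. This is the delicate point, because $h(t)$ may move away from the support of $u_0$.

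The cleanest route is to work with the $v$-term together with the $W$ condition. Since $W(0)>0$ and $W$ is continuous, $W>0$ on some $[0,\epsilon_W]$. However, $W(h(t)-x)$ for $x\in[-h_0,h_0]$ involves distances that may exceed $\epsilon_W$. To handle this, we choose the time interval short, namely $[0,\tau]$ with $\tau$ small and independent of $n$, and use that $h^{\mu_n}$ stays near $h_0$ on $[0,\tau]$. This cannot be justified directly, since $\mu_n$ is large and $h$ can move fast. But if $h^{\mu_n}$ moves fast, its increase itself provides the contradiction. Precisely, let $\tau_n$ be the first time at which $h^{\mu_n}-g^{\mu_n}$ reaches $2h_0+\varepsilon_0/4$. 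If $\tau_n\le 1$, then on $[0,\tau_n]$ the proof of Lemma \ref{a3} gives $h'\ge \mu_n\beta_0$, and hence $\tau_n\le \varepsilon_0/(4\mu_n\beta_0)$, which is fine. We then iterate this argument, using the uniform positive lower bound $(\underline u,\underline v)\ge$ a positive multiple of $(u_0,v_0)$ on $[-h_0,h_0]$ together with the strict positivity that Lemma \ref{vv} propagates into a slightly larger interval.

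I expect the main obstacle to be exactly this uniform lower bound on $\Phi_n$ for general, possibly compactly supported, $J_1$ and $W$. The fallback is the standard trick from the proof of Lemma \ref{d3}. Since $J_1(0)>0$, nonlocal diffusion gives $u^{\mu_n}(t,x)\ge c>0$ near the boundary after time $1/2$, uniformly in $n$. This follows by comparing, on the fixed region $[g(1/2),h(1/2)]\supset[-h_0,h_0]$, with the solution of \eqref{m0} started at time $1/2$; that solution is positive on its whole interval by Lemma \ref{hh}, with bounds depending only on the interval length, which lies in $[2h_0,2L^*]$. A compactness argument over this range of lengths then gives a lower bound independent of $n$. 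Once $\Phi_n\ge c_0$ holds on $[1/2,1]$, the integrated inequality gives $2L^*\ge 2h_0+\mu_n c_0/2\to\infty$, which is a contradiction. Therefore $\bar\mu$ exists.
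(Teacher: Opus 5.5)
Your reduction to a sequence $\mu_n\to\infty$ along which vanishing occurs, the bound $h^{\mu_n}-g^{\mu_n}\le 2L^*$, and the integrated inequality $2L^*\ge 2h_0+\mu_n\int_0^1\Phi_n\,\rd t$ are all fine. The gap is the step everything rests on: a lower bound $\Phi_n\ge c_0$ with $c_0$ independent of $n$. You never prove it, and the fallback does not supply it.

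Comparing on the frozen interval $[g^{\mu_n}(1/2),h^{\mu_n}(1/2)]$ from time $1/2$ gives $u^{\mu_n}\ge c$ only on that interval. (The constant comes from the $n$-independent lower bound $e^{-(d_1+a)/2}u_0$ on $[-h_0,h_0]$, not from the interval length alone.) But $\Phi_n(t)$ for $t\in[1/2,1]$ is evaluated at the \emph{current} fronts. Since $\mu_n\to\infty$, nothing prevents $h^{\mu_n}$ from advancing by up to $2L^*-2h_0$ during $[1/2,1]$; as you note yourself, the fronts move at speed of order $\mu_n$ whenever there is mass near them.

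Suppose $J_1$ and $W$ vanish outside $[-r,r]$ and $[0,r]$, and $h^{\mu_n}(t)-h^{\mu_n}(1/2)>r$. Then the region where you have a lower bound contributes nothing to the right-hand flux. Meanwhile, right after such a rapid advance, $u^{\mu_n}$ and $v^{\mu_n}$ near the new front are only of size $O(1/\mu_n)$. So a pointwise bound on all of $[1/2,1]$ is not merely unproved; it can fail. The ``iterate'' step has the same defect: the near-front lower bound degenerates after each quick advance, and you give no way to recover uniform constants. (Incidentally, the fronts are only confined to $[h_0-2L^*,2L^*-h_0]$, not $[-L^*-h_0,L^*+h_0]$; this is harmless.)

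The missing idea is to use Corollary~\ref{nn} not only to pass to a sequence but to anchor the lower bound to one fixed solution, as the paper does. Since $h_{\mu,\infty}$ is nondecreasing in $\mu$ and bounded, $H_\infty:=\lim_{\mu\to\infty}h_{\mu,\infty}<\infty$. Choose $\mu_0,t_0$ with $H_\infty-h_{\mu_0}(t_0)<\epsilon_0/4$. Then for $\mu\ge\mu_0$ and $t\ge t_0$ you have both $u_\mu\ge u_{\mu_0}$ and $h_{\mu_0}(t)\le h_\mu(t)<h_{\mu_0}(t)+\epsilon_0/4$. Hence for $\tau\in[t_0,t_0+1]$ the outward flux of $u_\mu$ across $h_\mu(\tau)$ is at least $\frac14\epsilon_0\delta_0\int_{h_{\mu_0}(\tau)-\epsilon_0/2}^{h_{\mu_0}(\tau)}u_{\mu_0}(\tau,x)\,\rd x$. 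This is positive and independent of $\mu$, so $h_{\mu,\infty}-h_\mu(t_0)\ge c\mu\to\infty$, a contradiction.

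Alternatively, you can repair your own route by pigeonholing in time. The fronts move by at most $2L^*$ in total, so for $N$ large some window $[k,k+1]$ with $0\le k\le N$ (depending on $n$) has $h^{\mu_n}$ and $g^{\mu_n}$ each moving by less than a small $\epsilon$. Compare $u^{\mu_n}$ from time $k$ with the scalar problem $w_t=d_1\int_{g^{\mu_n}(k)}^{h^{\mu_n}(k)}J_1(x-y)w(t,y)\,\rd y-(d_1+a)w$, started from $e^{-(d_1+a)N}u_0$ on $[-h_0,h_0]$. By compactness of the family of admissible intervals, this gives a uniform positive lower bound for $t\in[k+\tfrac12,k+1]$ and $x\in[g^{\mu_n}(k),h^{\mu_n}(k)]$. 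Hence the right-hand flux is at least $c_0$ there, and $h^{\mu_n}(k+1)-h^{\mu_n}(k+\tfrac12)\ge c_0\mu_n/2$ contradicts the choice of window for large $n$. Either way, the essential ingredient is a lower bound taken near the current front that is uniform in $\mu$.
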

\noindent{\bf{Proof}}. Suppose for contradiction that there exists an admissible initial function pair $(u_0, v_0)$ such that $h_\infty-g_\infty<+\infty$ for any $\mu > 0$. By Lemma \ref{d3}, we have $\lambda_p(g_\infty, h_\infty) \ge 0$. This indicates that $h_\infty - g_\infty \leq 2L^*$.
To emphasize the dependence of the solution of problem~\eqref{A} on \( \mu \), we denote it by \( (u_\mu, v_\mu, g_\mu, h_\mu) \). Thanks to Corollary~\ref{nn}, we know that \( u_\mu, v_\mu, -g_\mu \), and \( h_\mu \) are increasing with respect to \( \mu > 0 \).

Denote
\begin{align*}
h_{\mu,\infty} &:= \lim_{t \to \infty} h_\mu(t), \quad g_{\mu,\infty} := \lim_{t \to \infty} g_\mu (t).
\end{align*}
Clearly, both $-g_{\mu,\infty}$ and $h_{\mu,\infty}$ are nondecreasing in $\mu > 0$ and bounded. Therefore,
\begin{align*}
H_\infty &:= \lim_{\mu \to +\infty} h_{\mu,\infty} < +\infty, \quad G_\infty := \lim_{\mu \to +\infty} g_{\mu,\infty} > -\infty.
\end{align*}
By the conditions {\bf{(J)}} and {\bf{(W)}}, there exist constants $\epsilon_0 \in (0, h_0/2)$ and $\delta_0 > 0$ such that $J_1(x-y) > \delta_0$ for $|x-y| \leq \epsilon_0$ and $\int_{\epsilon_0}^{2h_0} W(x) \, dx > \delta_0$. Then for such $\epsilon_0$, there exist $\mu_0, t_0 > 0$ such that for all $\mu \geq \mu_0$, $t \geq t_0$, we have
\begin{align*}
H_\infty - h_\mu(t) &< \frac{\epsilon_0}{4}.
\end{align*}
Therefore, for any $\mu \geq \mu_0$, $t \geq t_0$, we have
{\small \begin{align*}
&h_{\mu,\infty} - h_\mu(t_0)\\
=&\ \mu \int_{t_0}^{+\infty} \left( \int_{g_\mu(\tau)}^{h_\mu(\tau)} \int^{+\infty}_{h_\mu(\tau)} J_1(x-y) u_\mu(\tau,x) \rd y  \rd x + \rho \int_{g_\mu(\tau)}^{h_\mu(\tau)} v_\mu(\tau,x) W(h_\mu(\tau) - x)\rd x \right) \rd \tau \\
\geq&\ \mu \int_{t_0}^{t_0 + 1} \left( \int_{g_{\mu_0}(\tau)}^{h_{\mu_0}(\tau)}\int_{h_{\mu_0}(\tau) + \epsilon_0/4}^{+\infty} J_1(x-y) u_{\mu_0}(\tau,x)  \rd y  \rd x \right.\\
&\left.+ \rho \int^{h_{\mu_0}(\tau) }_{g_{\mu_0}(\tau)} v_{\mu_0}(\tau,x) W(h_\mu(\tau) - x)\rd x\right)  \rd \tau \\
\geq&\ \mu \int_{t_0}^{t_0 + 1} \left( \int_{h_{\mu_0}(\tau) - \epsilon_0/2}^{h_{\mu_0}(\tau)}\int_{h_{\mu_0}(\tau) + \epsilon_0/4}^{h_{\mu_0}(\tau) + \epsilon_0/2} J_1(x-y)  u_{\mu_0}(\tau,x)  \rd y \rd x \right.\\
&\left.+ \rho \inf_{x\in[g_{\mu_0}(\tau), h_{\mu_0}(\tau)]} v_{\mu_0}(\tau,x)  \int_{g_{\mu_0}(\tau)}^{h_{\mu_0}(\tau)} W(h_\mu(\tau)-x) \rd x \right) \rd \tau\\
\ge&\ \mu \int_{t_0}^{t_0 + 1}\left(\frac{1}{4}\epsilon_0 \delta_0 \int_{h_{\mu_0}(\tau)-\epsilon_0/2}^{h_{\mu_0}(\tau)}u_{\mu_0}(\tau,x)\rd x+\rho
\inf_{x\in[g_{\mu_0}(\tau),h_{\mu_0}(\tau)]}v_{\mu_0}(\tau,x)\int_{h_\mu(\tau)-
h_{\mu_0}(\tau)}^{h_\mu(\tau)-g_{\mu_0}(\tau)}W(x)\rd x\right)\rd \tau\\
\geq&\ \mu \int_{t_0}^{t_0 + 1} \left( \frac{1}{4} \epsilon_0 \delta_0 \int_{h_{\mu_0}(\tau) - \epsilon_0/2}^{h_{\mu_0}(\tau)} u_{\mu_0}(\tau, x)  \rd x + \rho \inf_{x \in [g_{\mu_0}(\tau), h_{\mu_0}(\tau)]} v_{\mu_0}(\tau, x) \int_{\epsilon_0}^{2h_0} W(x)  \rd x \right) \rd \tau \\
\geq&\ \mu \int_{t_0}^{t_0 + 1} \left( \frac{1}{4} \epsilon_0 \delta_0 \int_{h_{\mu_0}(\tau) - \epsilon_0/2}^{h_{\mu_0}(\tau)} u_{\mu_0}(\tau, x)  \rd x + \rho \delta_0 \inf_{x \in [g_{\mu_0}(\tau), h_{\mu_0}(\tau)]} v_{\mu_0}(\tau, x) \right) \rd \tau\\
\to & \ +\infty \mbox{ as } \mu\to\infty.
\end{align*}}
 This contradiction implies the existence of  $\overline{\mu} > 0$ such that spreading happens when $\mu > \overline{\mu}$. \qed

Below is a sharp criteria in terms of $\mu$ for the spreading-vanishing dichotomy.
\begin{lemma}\label{e7}
Suppose that $1 < \mathcal{R}_0 < \left(1 + \frac{d_1}{a}\right)\left(1 + \frac{d_2}{b}\right)$ and $h_0 < L^*$ hold. Then there exists $\mu^* > 0$ depending on $(u_0, v_0)$ such that  vanishing occurs when $0 < \mu \leq \mu^*$ and spreading occurs when $\mu > \mu^*$.
\end{lemma}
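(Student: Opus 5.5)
We follow the standard sharp-threshold argument, combining monotonicity in $\mu$ (Corollary \ref{nn}) with the two one-sided results Lemma \ref{e2} and Lemma \ref{e6}. Define
\[
\Sigma^*:=\{\mu>0:\ h_{\mu,\infty}-g_{\mu,\infty}<+\infty\},\qquad \mu^*:=\sup\Sigma^*.
\]
By Lemma \ref{e2}, $(0,\underline{\mu}]\subset\Sigma^*$, so $\Sigma^*\neq\emptyset$. By Lemma \ref{e6}, every $\mu>\bar\mu$ lies outside $\Sigma^*$. Hence $\mu^*\in[\underline\mu,\bar\mu]$. Corollary \ref{nn} shows that $h_\mu$ and $-g_\mu$ are nondecreasing in $\mu$, so $\Sigma^*$ is an interval with left endpoint $0$. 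Thus vanishing occurs for $\mu<\mu^*$, and by Theorem \ref{T12} (recall $\mathcal R_0>1$) spreading occurs for $\mu>\mu^*$. It remains to show $\mu^*\in\Sigma^*$, i.e.\ that vanishing occurs at $\mu=\mu^*$.

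Suppose instead that spreading occurs at $\mu=\mu^*$. Then $h_{\mu^*}(t)-g_{\mu^*}(t)\to\infty$, so there is $T>0$ with $h_{\mu^*}(T)-g_{\mu^*}(T)>2L^*$. By the continuous dependence of the solution of \eqref{A} on $\mu$ (used already in the proof of Lemma \ref{mm}), $(g_\mu(T),h_\mu(T))\to(g_{\mu^*}(T),h_{\mu^*}(T))$ as $\mu\to\mu^*$. Hence for $\mu<\mu^*$ close to $\mu^*$ we get $h_\mu(T)-g_\mu(T)>2L^*$. Since $g_\mu$ and $h_\mu$ are monotone in $t$, this gives $h_{\mu,\infty}-g_{\mu,\infty}>2L^*$, so Proposition \ref{c3}(iv) yields $\lambda_p(g_{\mu,\infty},h_{\mu,\infty})<0$. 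But such $\mu<\mu^*$ lies in $\Sigma^*$, and Lemma \ref{d3} then requires $\lambda_p\ge0$. This contradiction shows $\mu^*\in\Sigma^*$.

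The main point to check is the continuous dependence on $\mu$ over the finite interval $[0,T]$. We would obtain it from the contraction-mapping construction of Theorem \ref{B2}, since the estimate \eqref{B1} is Lipschitz in $\mu$, together with a Gronwall-type bound on $\|U_1-U_2\|$ and $\|V_1-V_2\|$, iterated over finitely many short time steps. Alternatively, one can approximate by the perturbed solutions $(u_\epsilon,v_\epsilon,g_\epsilon,h_\epsilon)$ from the proof of Lemma \ref{mm}. Apart from this step, the argument is a direct combination of the lemmas cited above.
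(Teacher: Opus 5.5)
Your proposal is correct and follows essentially the same route the paper intends: the paper omits the proof and defers to \cite{fb4}, and the identical structure appears in its own proof of Lemma \ref{xxx} for the parameter $\sigma$. That structure is to take $\mu^*$ as the supremum of the vanishing set, obtain $0<\mu^*<\infty$ and the interval structure from Lemmas \ref{e2}, \ref{e6} and Corollary \ref{nn}, and rule out spreading at $\mu^*$ via continuous dependence on $\mu$ combined with Proposition \ref{c3} and Lemma \ref{d3}, with continuous dependence invoked at the same level of rigor as in the paper.
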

\noindent{\bf{Proof.}} This lemma can be proved  as in \cite{fb4}, and we omit the details. \qed

\vspace{1em}

Next we determine spreading or vanishing of system~\eqref{A} by a parameterization of  the initial value $(u_0,v_0)$. So    we assume that  $(u_0,v_0)=\sigma(\psi_1,\psi_2)$, where $\sigma>0$ is regarded as a parameter and $(\psi_1,\psi_2)$ is fixed and satisfies \eqref{B}.

\begin{lemma}\label{zzz}
Suppose that $1 < \mathcal{R}_0 < (1 + \frac{d_1}{a})(1 + \frac{d_2}{b})$, $h_0 < L^*$. If
\[
\mbox{either  ${\rm (i)}\ \ J_1(x)>0$ in $\mathbb{R}$ or \ ${\rm (ii)}\ \ W(x)>0 $ in $[0, 2L^*]$,}
\] then there exists $\bar{\sigma}> 0$ depending on $(\psi_1,\psi_2)$ such that spreading happens for \eqref{A} if $\sigma > \bar{\sigma}$.
\end{lemma}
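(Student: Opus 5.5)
We argue by contradiction. Suppose there is a sequence $\sigma_n\to\infty$ for which vanishing occurs, and write $(u_n,v_n,g_n,h_n)$ for the corresponding solutions. By Lemma \ref{d3}, $\lambda_p(g_{n,\infty},h_{n,\infty})\ge 0$, so Proposition \ref{c3}(iv) gives $h_{n,\infty}-g_{n,\infty}\le 2L^*$ for every $n$. Hence $[g_n(t),h_n(t)]\subset[-2L^*,2L^*]$ for all $t\ge0$. (In fact $h_n(t)\le -g_n(t)+2L^*$ together with $g_n(t)\le -h_0$ yields tighter bounds, but we only need boundedness.)

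The strategy is to obtain a lower bound for $u_n,v_n$ on a fixed time interval that grows linearly in $\sigma_n$, and then integrate the free boundary equation to contradict the boundedness of $h_n-g_n$. Exactly as in \eqref{a5}, we have $u_n(t,x)\ge \sigma_n e^{-(d_1+a)t}\psi_1(x)$ and $v_n(t,x)\ge \sigma_n e^{-(d_2+b)t}\psi_2(x)$ for $x\in[-h_0,h_0]$. Fix a small $\varepsilon\in(0,h_0/2)$ and set $m:=\min\{\min_{|x|\le h_0-\varepsilon}\psi_1,\ \min_{|x|\le h_0-\varepsilon}\psi_2\}>0$. Then on $[0,1]\times[-h_0+\varepsilon,h_0-\varepsilon]$ we get $u_n,v_n\ge c\,\sigma_n m$ with $c=e^{-(d_1+d_2+a+b)}$.

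Next we integrate the equation for $h_n-g_n$ over $t\in[0,1]$, keeping only the contribution from $x\in[-h_0+\varepsilon,h_0-\varepsilon]$. The two cases are handled separately.

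In case (i), $J_1>0$ on $\mathbb R$. For $x$ in this interval and $h_n(t)\le 2L^*$, the quantity $\int_{h_n(t)}^{\infty}J_1(x-y)\,\rd y\ge \int_{4L^*}^{\infty}J_1(z)\,\rd z=:\kappa>0$. Therefore
\[
h_n(1)-h_0\ge \mu\,\kappa\,c\,\sigma_n m\,(2h_0-2\varepsilon)\to\infty .
\]

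In case (ii), $W>0$ on $[0,2L^*]$. For $x\in[-h_0+\varepsilon,h_0-\varepsilon]$ we have $h_n(t)-x\in[0,2L^*]$, because $h_n(t)-g_n(t)\le 2L^*$ and $x\ge g_n(t)$. Since $W$ is continuous, $W\ge w_0>0$ on that interval. This gives
\[
h_n(1)-h_0\ge \mu\rho\, w_0\, c\,\sigma_n m\,(2h_0-2\varepsilon)\to\infty .
\]

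In both cases this contradicts $h_n(1)\le 2L^*$. Hence there is $\bar\sigma$ such that every $\sigma>\bar\sigma$ leads to spreading.

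The main point to check is case (ii). There the positivity of $W$ is only available on $[0,2L^*]$, so we must verify that the arguments $h_n(t)-x$ remain in this range. This is exactly what the a priori bound $h_{n,\infty}-g_{n,\infty}\le 2L^*$ from Lemma \ref{d3} and Proposition \ref{c3} guarantees. The bound is needed because the free boundary is monotone in time.
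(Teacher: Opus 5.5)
Your argument is correct, but it is organized differently from the paper's. Both proofs rest on the same key input: the a priori bound $h_\infty-g_\infty\le 2L^*$ from Lemma \ref{d3} and Proposition \ref{c3}. That bound is what makes hypothesis (i) or (ii) usable, since it keeps $W_{J_1}(h(t)-x)$ or $W(h(t)-x)$ bounded below uniformly for all relevant $x$ and $t$.

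From there the paper argues globally in time. It integrates $h'$ over $(0,\infty)$ to get
\[
\int_0^\infty\int_{g}^{h}u\,\rd x\,\rd s\le \frac{h_\infty-h_0}{\mu\varrho_1}.
\]
It then derives a differential inequality for the total mass $\int_{g(t)}^{h(t)}u\,\rd x$, bounding the nonlocal dispersal loss through $h'-g'$. Finally it lets $t\to\infty$, using that the mass tends to $0$ under vanishing, to bound $\sigma$; case (ii) is only indicated as analogous with $v$ in place of $u$.

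You instead work on the fixed window $[0,1]$. The pointwise bounds $u\ge\sigma e^{-(d_1+a)t}\psi_1$ and $v\ge\sigma e^{-(d_2+b)t}\psi_2$ on $[-h_0,h_0]$ (the same estimate as \eqref{a5}) show that $h(1)-h_0$ grows linearly in $\sigma$. This contradicts $h(1)\le 2L^*$ directly.

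Your route is shorter and needs neither the decay $\|u\|_{C([g(t),h(t)])}\to0$ nor the mass-balance computation. It treats (i) and (ii) uniformly and gives an explicit threshold, for instance $\bar\sigma=(2L^*-h_0)/[\mu\kappa c\, m(2h_0-2\varepsilon)]$ in case (i). The paper's use of the full time history is not needed for this statement.

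One minor slip: your parenthetical should read $h_n(t)\le g_n(t)+2L^*\le 2L^*-h_0$.
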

\noindent{\bf{Proof}}. Arguing indirectly we assume that $h_\infty-g_\infty<+\infty$ for any $\sigma>0$. By Lemma \ref{d3} and Proposition \ref{c3}, we know that
\[
\mbox{$\lim\limits_{t \to \infty}\|u\|_{C([g(t),h(t)])}=\lim\limits_{t \to \infty}\|v\|_{C([g(t),h(t)])}=0$ and $h_\infty-g_\infty\leq 2L^*$.}
\]

If $J_1(x)>0$ in $\mathbb{R}$, then for any $t>0$ and $x\in(g(t),h(t))$,
$$W_{J_1}(h(t)-x)=\int_{h(t)-x}^{\infty}J(z) \rd z\geq\int^{\infty}_{2L^*}J(z)\rd z=:\varrho_1>0.$$
It follows that
$$\dfrac{1}{\mu}h'(t)\geq\int_{g(t)}^{h(t)}u(t,x)W_{J_1}(h(t)-x)
\rd x\geq\varrho_1\int_{g(t)}^{h(t)}u(t,x)\rd x.$$
Hence
$$\int_0^\infty\int_{g(s)}^{h(s)}u(s,x)\rd x\rd s\leq\frac
{h_\infty-h_0}{\mu\varrho_1}=:\varrho_2.$$
A straightforward calculation yields
\begin{align}\label{qe}
&\frac{d}{dt}\int_{g(t)}^{h(t)}u(t,x)\rd x\nonumber\\
&=\int_{g(t)}^{h(t)}u_t(t,x)\rd x+h'(t)u(t,x)\big|_{(t,
h(t))}+g'(t)u(t,x)\big|_{(t,g(t))}\nonumber\\
&=\int_{g(t)}^{h(t)}\left[ d_1 \int_{g(t)}^{h(t)} J_1(x-y) u(t,y) \rd y - d_1 u - a u + e v\right]\rd x\nonumber\\
&\geq \frac{d_1}{\mu}\left[\mu\int_{g(t)}^{h(t)}\int_{g(t)}^{h(t)}J_1(x-y) u(t,y) \rd y\rd x-\mu\int_{g(t)}^{h(t)}u(t,x)\rd x\right]-a\int_{g(t)}^{h(t)}u(t,x)\rd x.
\end{align}
Moreover, we have
\begin{align*}
-[h'(t)-g'(t)]=&\ \mu\lf(\int_{g(t)}^{h(t)}\int_{g(t)}^{h(t)}J_1(x-y) u(t,x) \rd y\rd x-\int_{g(t)}^{h(t)}u(t,x)\rd x\rr)\\
&-\mu\rho\lf(\int_{g(t)}^{h(t)}v(t,x)W(h(t)-x)\rd x+\int_{g(t)}^{h(t)}v(t,x)W(x-g(t))\rd x\rr)\\
\leq&\  \mu\lf(\int_{g(t)}^{h(t)}\int_{g(t)}^{h(t)}J_1(x-y) u(t,y) \rd y\rd x-\int_{g(t)}^{h(t)}u(t,x)\rd x\rr).
\end{align*}
Together with \eqref{qe}, we see
$$
\frac{d}{dt}\int_{g(t)}^{h(t)}u(t,x)\rd x\geq-\frac{d_1}{\mu}[h'(t)-g'(t)]-a\int_{g(t)}
^{h(t)}u(t,x)\rd x.
$$
Therefore, we have
$$
\int_{g(t)}^{h(t)}u(t,x)\rd x\geq\sigma\int_{-h_0}^{h_0}\psi_1(x)\rd x-2\frac{d_1}{\mu}L^*-
a\int_0^t\int_{g(s)}^{h(s)}u(s,x)\rd x\rd s.
$$
Summarizing the above discussions and letting $t \to \infty$ we obtain
$$
\sigma\leq\frac{1}{\int_{-h_0}^{h_0}\psi_1(x)\rd x}\left(a\varrho_2+2\frac{d_1}{\mu}L^*\right),
$$
which contradicts the unboundedness of $\sigma$. This proves the lemma when (i) holds.

If (ii) holds, namely $W(x)>0$ over $[0, 2L^*]$, then there exists $\tilde\varrho_1>0$ such that $W(x)\geq \tilde\varrho_1$ for $x\in [0, 2L^*]$. It follows that, for any $t>0$ and $x\in(g(t),h(t))$,
$$\dfrac{1}{\mu}h'(t)\geq \rho \int_{g(t)}^{h(t)}v(t,x)W(h(t)-x)
\rd x\geq \rho \tilde \varrho_1\int_{g(t)}^{h(t)}v(t,x)\rd x.$$
Hence
$$\int_0^\infty\int_{g(s)}^{h(s)}v(s,x)\rd x\rd s\leq\frac
{h_\infty-h_0}{\mu\rho\tilde \varrho_1}=:\tilde \varrho_2.$$
From this we can argue analogously as in case (i) above to deduce a contradiction.
The proof is now complete. \qed

\begin{lemma}\label{xxx} Under the conditions of Lemma \ref{zzz},
 there exists a unique $\sigma^*> 0$ such that  vanishing occurs when $0<\sigma\leq\sigma^*$ and spreading occurs when $\sigma > \sigma ^*$.
\end{lemma}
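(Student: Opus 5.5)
We would follow the standard threshold argument of \cite{fb4} (as used for Lemma \ref{e7}), with $\sigma$ playing the role of $\mu$. Write $(u_\sigma,v_\sigma,g_\sigma,h_\sigma)$ for the solution of \eqref{A} with $(u_0,v_0)=\sigma(\psi_1,\psi_2)$. By the comparison principle (Lemma \ref{mm}), $u_\sigma,v_\sigma,h_\sigma,-g_\sigma$ are nondecreasing in $\sigma$. Indeed, for $\sigma_1<\sigma_2$ the solution with $\sigma_2$ is an upper solution for the one with $\sigma_1$, and by Lemma \ref{vv} the comparison is strict in the interior. We then define
\[
\Sigma^*:=\{\sigma>0:\ h_{\sigma,\infty}-g_{\sigma,\infty}<\infty\},\qquad \sigma^*:=\sup\Sigma^*.
\]
By Remark \ref{88} (with $\mu$ fixed and $\sigma$ small, \eqref{e3} holds), every small $\sigma$ lies in $\Sigma^*$, so $\Sigma^*\neq\emptyset$. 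By Lemma \ref{zzz}, $\sigma^*\le\bar\sigma<\infty$. Monotonicity shows $\Sigma^*$ is an interval: vanishing for $\sigma_2$ forces vanishing for every $\sigma_1<\sigma_2$. Hence spreading occurs for $\sigma>\sigma^*$ and vanishing for $\sigma<\sigma^*$. Uniqueness of $\sigma^*$ is then automatic.

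The main point is to show $\sigma^*\in\Sigma^*$, i.e.\ that the borderline value vanishes. We argue by contradiction and assume spreading at $\sigma^*$. Then $h_{\sigma^*}(t)-g_{\sigma^*}(t)\to\infty$, so we can fix $T$ with $h_{\sigma^*}(T)-g_{\sigma^*}(T)>2L^*$, and hence $\lambda_p(g_{\sigma^*}(T),h_{\sigma^*}(T))<0$ by Proposition \ref{c3}. By continuous dependence of the solution of \eqref{A} on the initial data on the finite interval $[0,T]$ (the same continuity used at the end of Lemma \ref{mm}), for $\sigma<\sigma^*$ close to $\sigma^*$ we still have $h_\sigma(T)-g_\sigma(T)>2L^*$. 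Since $h_\sigma-g_\sigma$ is increasing, $h_{\sigma,\infty}-g_{\sigma,\infty}>2L^*$. But such $\sigma$ lies in $\Sigma^*$, so Lemma \ref{d3} and Proposition \ref{c3} give $h_{\sigma,\infty}-g_{\sigma,\infty}\le 2L^*$, a contradiction. Thus vanishing occurs at $\sigma=\sigma^*$, which gives the closed interval $(0,\sigma^*]$ in the statement.

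The expected obstacle is the continuous dependence step. It needs the solution map $\sigma\mapsto(u_\sigma,v_\sigma,g_\sigma,h_\sigma)$ to be continuous on $[0,T]$ in the sup norm of $(g,h)$. I would obtain this from the contraction-mapping construction in Lemmas \ref{a3}--\ref{a9}, together with Gronwall-type estimates on the difference of two solutions, in the same way as the Lipschitz bounds in the proof of Lemma \ref{a9}, iterated over $[0,T]$. The constants depend only on uniform bounds $A,B$ from Lemma \ref{cc}, and these are uniform for $\sigma$ in a compact set. Alternatively, it suffices to prove lower semicontinuity of $h_\sigma(T)-g_\sigma(T)$ as $\sigma\uparrow\sigma^*$. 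This follows by comparing with the lower solution obtained from $(u_{\sigma^*},v_{\sigma^*})$ started at slightly reduced data, as in the $\epsilon$-perturbation in the proof of Lemma \ref{mm}.
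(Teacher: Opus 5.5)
Your proposal is correct and follows the paper's proof essentially step by step. You define $\sigma^*$ as the supremum of the vanishing set, obtain $0<\sigma^*<\infty$ from Remark~\ref{88} and Lemma~\ref{zzz}, use monotonicity in $\sigma$ from Lemma~\ref{mm}, and rule out spreading at $\sigma^*$ by fixing $T$ with $h_{\sigma^*}(T)-g_{\sigma^*}(T)>2L^*$, then using continuous dependence to contradict Lemma~\ref{d3} with Proposition~\ref{c3} for $\sigma$ slightly below $\sigma^*$. The paper simply invokes continuous dependence on $\sigma$ without justification, so your discussion of that step goes beyond the paper; note only that your alternative via the $\epsilon$-perturbation of Lemma~\ref{mm} is not independent, since that proof itself relies on continuous dependence.
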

\noindent{\bf Proof.}  Denote by $(u_\sigma,v_\sigma,g_\sigma,h_\sigma)$ the unique solution of \eqref{A} with $(u_0,v_0)=\sigma(\psi_1,\psi_2)$, and define $h_{\sigma,\infty}:=\lim_{t \to \infty} h_\sigma(t)$, $g_{\sigma,\infty}:=\lim_{t \to \infty}g_\sigma(t)$.
We further define
$$
\sigma^*:=\sup\{\sigma:\sigma>0~\text{such that}~h_{\sigma,\infty}-g_{\sigma,\infty}<+\infty\}.
$$
According to Remark \ref{88} and Lemma \ref{zzz}, we have $0<\sigma^*<+\infty$. Similar to Corollary \ref{nn}, it follows from Lemma \ref{mm} that $u_\sigma,v_\sigma,-g_\sigma,h_\sigma$ are increasing in $\sigma>0$. Therefore,  by the definition of $\sigma^*$, we obtain
\begin{equation}\label{999}
h_{\sigma,\infty}-g_{\sigma,\infty}<+\infty~ \text{for all}~ \sigma \in (0,\sigma^*)~\text{and}~ h_{\sigma,\infty}-g_{\sigma,\infty}=+\infty~\text{for all}~ \sigma \in (\sigma^*,+\infty).
\end{equation}
It remains to show that $h_{\sigma^*,\infty}-g_{\sigma^*,\infty}<+\infty$. Otherwise, there exists $T>0$ such that $h_{\sigma^*}(T)-g_{\sigma^*}(T)>2L^*$. By the continuous dependence of the solution to \eqref{A} on the parameter $\sigma$, we can find a small $\varepsilon>0$ such that
\[
h_{\sigma,\infty}-g_{\sigma,\infty}>h_{\sigma}(T)-g_{\sigma}(T)>2L^*\mbox{ and }\sigma \in (\sigma^*-\varepsilon,\sigma^*).
\]
Combining this with Proposition~\ref{c3} and Lemma~\ref{d3}, we conclude that
\[
h_{\sigma,\infty}-g_{\sigma,\infty}=+\infty\mbox{ for }\sigma \in (\sigma^*-\varepsilon,\sigma^*),
\]
which contradicts \eqref{999}. Hence we have $h_{\sigma^*,\infty}-g_{\sigma^*,\infty}<+\infty$. The proof is finished.\qed

\vspace{1em}

It is easily seen that Theorem \ref{T13}
 follows directly from Lemmas \ref{d5}, \ref{666}, \ref{d9}, \ref{e1}, \ref{e7} and \ref{xxx}, while Theorem \ref{thm6} follows from Lemmas \ref{d5} and \ref{R0-1}.

\subsection{Effect of the diffusion rate}

In this subsection, we study the effect of the diffusion rate on the dynamical behaviour of disease transmission, and we always assume
\[
(d_1, d_2)=s(d_1^0, d_2^0)
\]
 with $(d_1^0, d_2^0)$ a fixed pair of positive numbers and $s>0$ regarded as a parameter.
  Let \( \lambda_p(s, 2h_0) \) denote the principal eigenvalue of \eqref{c4} with  \((d_1, d_2)=s(d_1^0, d_2^0),\ \  [L_1, L_2] = [-h_0, h_0] \).

By Proposition \ref{c9}, we see that $\lambda_p(s,2h_0) > 0$ for any $s > 0$ if $\mathcal{R}_0 \leq 1$. In the previous subsection, we have shown that when \( \mathcal{R}_0 \leq 1 \), the species \( u \) and \( v \) will eventually go extinct regardless of the value of  \( s > 0 \). Therefore, in what follows, we focus on the case \( \mathcal{R}_0 > 1 \), and aim to determine a sharp threshold value of \( s \) that determines the spreading or vanishing  of system~\eqref{A}, as stated in Theorem~\ref{T14}.

Recall from Proposition~\ref{c9} that
\[
\lim_{s \to 0} \lambda_p(s,2h_0) = \frac{1}{2} \left( \frac{a}{e} + \frac{b}{G'(0)} - \sqrt{\left( \frac{a}{e} - \frac{b}{G'(0)} \right)^2 + 4} \right) < 0, \quad \lim_{s \to \infty} \lambda_p(s,2h_0) = +\infty.
\]
Then there exists $d^* >0$ such that
\[
\lambda_p(s,2h_0) < 0 \text{ if } s < d^*,~\lambda_p(s,2h_0) = 0 \text{ if } s = d^* \text{ and } \lambda_p(s,2h_0) > 0 \text{ if } s > d^*.
\]

\begin{lemma} \label{e8}
Suppose that $\mathcal{R}_0>1$ and $s\leq d^*$ hold. Then spreading always happens for \eqref{A}.
\end{lemma}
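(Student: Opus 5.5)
The plan is to argue by contradiction via Lemma \ref{d3}, splitting into the cases $s<d^*$ and $s=d^*$.

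\emph{Case $s<d^*$.} Suppose vanishing occurs, i.e.\ $h_\infty-g_\infty<\infty$. Lemma \ref{d3} then gives $\lambda_p(g_\infty,h_\infty)\ge 0$ for $(d_1,d_2)=s(d_1^0,d_2^0)$. Since $[g_\infty,h_\infty]$ has length $h_\infty-g_\infty>2h_0$, the strict monotonicity in Proposition \ref{c3}(i) gives
\[
\lambda_p(g_\infty,h_\infty)<\lambda_p(-h_0,h_0)=\lambda_p(s,2h_0).
\]
By the choice of $d^*$, $\lambda_p(s,2h_0)<0$ for $s<d^*$. This contradicts $\lambda_p(g_\infty,h_\infty)\ge0$, so spreading occurs.

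\emph{Case $s=d^*$.} This is the delicate case, since $\lambda_p(d^*,2h_0)=0$. The key point is that the free boundaries strictly expand. By Theorem \ref{B2} (see also Lemma \ref{a3}), $h$ and $-g$ are strictly increasing, so $h_\infty>h_0$ and $g_\infty<-h_0$. Hence the limiting interval is strictly longer than $2h_0$. Assume vanishing. Because translation does not change $\lambda_p$ (the kernels depend only on $x-y$) and $\lambda_p$ is strictly decreasing in the length by Proposition \ref{c3}(i),
\[
\lambda_p(g_\infty,h_\infty)<\lambda_p(d^*,2h_0)=0.
\]
This again contradicts $\lambda_p(g_\infty,h_\infty)\ge0$ from Lemma \ref{d3}.

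\emph{Remarks on hypotheses.} Proposition \ref{c3}(i) is stated for fixed $(d_1,d_2)$, which is exactly our setting once $s$ is fixed. Proposition \ref{c9} is only needed to guarantee that $d^*$ exists and is unique, and that has already been done before the statement. The statement of Proposition \ref{c9} writes $0<L_1$, but it is insensitive to translation, so applying it to $[-h_0,h_0]$ is legitimate.

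The main obstacle is the borderline case $s=d^*$. Its resolution rests on two facts: $h_\infty-g_\infty>2h_0$ holds strictly, and the monotonicity in Proposition \ref{c3}(i) is strict. I expect no further difficulty beyond checking these.
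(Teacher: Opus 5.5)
Your proof is correct and follows the paper's argument: assume vanishing, use Lemma \ref{d3} to get $\lambda_p(g_\infty,h_\infty)\ge 0$, and reach a contradiction by combining the strict monotonicity of $\lambda_p$ in the interval length (Proposition \ref{c3}(i)) with $h_\infty-g_\infty>2h_0$ and $\lambda_p(s,2h_0)\le 0$. The split into $s<d^*$ and $s=d^*$ is unnecessary, because strict monotonicity handles both cases at once, as in the paper's one-line proof.
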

\noindent{\bf Proof.} For any fixed $s\leq d^*$, we have $\lambda_p(s,2h_0)\leq 0$. Arguing indirectly we assume that $h_\infty-g_\infty<+\infty$. It then follows from Proposition \ref{c3} that
$$
\lambda_p(s,h_\infty-g_\infty)<\lambda_p(s,2h_0)\leq 0,
$$
which is a contradiction to Lemma \ref{d3}. This completes the proof.\qed

\begin{lemma}\label{e9}
Suppose that $\mathcal{R}_0 > 1$ and $s > d^*$ hold. Then there exists $\mu_0 > 0$ such that  vanishing happens for \eqref{A} if $0 < \mu \leq \mu_0$.
\end{lemma}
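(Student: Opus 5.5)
The plan is to repeat the upper-solution construction from the proof of Lemma \ref{e2}, with the role of the condition $h_0<L^*$ now played by $s>d^*$. Fix $s>d^*$, so that $(d_1,d_2)=s(d_1^0,d_2^0)$ are fixed and $\lambda_p(s,2h_0)>0$ by the definition of $d^*$.

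First I pick the enlarged interval. By Proposition \ref{c3}(i), $\lambda_p(-L,L)$ is continuous in $L$ for these fixed diffusion rates. Hence there is $h_1>h_0$, close to $h_0$, such that the principal eigenvalue $\lambda_p(h_1)$ of \eqref{c4} on $[-h_1,h_1]$ is still positive. Let $(\phi_1,\phi_2)$ be a positive eigenfunction pair for $\lambda_p(h_1)$; it exists by Proposition \ref{c7} and the cited results for this self-adjoint cooperative system.

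Next I define the upper solution exactly as in Lemma \ref{e2}. Set
\[
\Lambda:=\tfrac12\lambda_p(h_1)\min\{e,G'(0)\},\qquad C:=h_1-h_0,
\]
\[
M:=C\Lambda\Big[\int_{-h_1}^{h_1}\phi_1\,\rd x+\rho\max_{[0,2h_1]}W\int_{-h_1}^{h_1}\phi_2\,\rd x\Big]^{-1},
\]
and take
\[
\bar h(t)=h_0+C(1-e^{-\Lambda t}),\qquad \bar g=-\bar h,\qquad (\bar u,\bar v)=\mu^{-1}Me^{-\Lambda t}(\phi_1,\phi_2).
\]
The checks proceed as before.

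\textbf{Differential inequalities.} These use $[\bar g,\bar h]\subset[-h_1,h_1]$, $J_i\ge0$ (so shrinking the integration domain only decreases the nonlocal term), the eigen-relation \eqref{c4}, and $G(\bar u)\le G'(0)\bar u$ from {\bf (G2)}. They yield the factors $\lambda_p(h_1)e-\Lambda\ge0$ and $\lambda_p(h_1)G'(0)-\Lambda\ge0$.

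\textbf{Boundary inequalities.} We have $W_{J_1}\le1$ and $W\le\max_{[0,2h_1]}W$ on the relevant range. By the choice of $M$, this gives $\bar h'(t)=C\Lambda e^{-\Lambda t}$ dominating the right-hand side of the free boundary condition, and symmetrically for $\bar g$. Both the differential and the boundary inequalities hold independently of $\mu$, because of the $\mu^{-1}$ scaling of $(\bar u,\bar v)$.

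\textbf{Initial ordering.} Finally, choose
\[
\mu_0:=M\min\Big\{\min_{[-h_0,h_0]}\phi_1,\ \min_{[-h_0,h_0]}\phi_2\Big\}\big(\|u_0\|_\infty+\|v_0\|_\infty\big)^{-1}.
\]
Then $\bar u(0,\cdot)\ge u_0$ and $\bar v(0,\cdot)\ge v_0$ on $[-h_0,h_0]$ whenever $0<\mu\le\mu_0$.

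Lemma \ref{mm} then gives $h(t)-g(t)\le\bar h(t)-\bar g(t)<2h_1$, so $h_\infty-g_\infty<\infty$, i.e., vanishing. The uniform convergence of $(u,v)$ to $(0,0)$ follows from Lemma \ref{d3}.

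The only point needing care, which I expect to be the main obstacle, is the positivity and continuity of the eigenfunction on the closed interval, needed for $\min\phi_i>0$. Since $\lambda_p(h_1)>0$ is a genuine principal eigenvalue here, I would invoke the existence of a continuous positive eigenpair from \cite{fb8,fb7}, exactly as in Lemma \ref{e2}. Everything else is a verbatim repetition with $d_i=sd_i^0$ fixed.
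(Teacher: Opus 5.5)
Your proof is correct, but it is not the construction the paper uses for Lemma \ref{e9}. You transplant the upper solution of Lemma \ref{e2} verbatim. That works because the hypothesis $h_0<L^*$ there and $s>d^*$ here serve only to give $\lambda_p>0$ on $[-h_0,h_0]$, hence on a slightly larger $[-h_1,h_1]$.

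The paper instead proceeds as follows:
\begin{itemize}
\item It takes as the $(u,v)$-component of the upper solution the actual solution $(\tilde u_1,\tilde v_1)$ of the fixed-boundary problem on $[-h_1,h_1]$ with zero-extended initial data.
\item It bounds this solution, via Lemma \ref{hh}, by an exponentially decaying multiple $c_1e^{-\delta t/2}(\phi_1,\phi_2)$ of the eigenfunction, with $c_1$ large depending on $(u_0,v_0)$.
\item It integrates this bound to define $\tilde h(t)=h_0+4\mu h_1c_1\big(\rho\sup_{[0,2h_1]}W+1\big)\int_0^t e^{-\delta\tau/2}\,\rd\tau$, choosing $\mu_0$ so small that $\tilde h<h_1$.
\end{itemize}

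So in the paper the smallness of $\mu$ enters through the speed of the free boundary. In your version the boundary $\bar h$ is independent of $\mu$, and the smallness of $\mu$ enters only through the initial ordering, thanks to the $\mu^{-1}$ scaling of $(\bar u,\bar v)$.

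Your route is more economical: one explicit upper solution, a single application of Lemma \ref{mm}, and no auxiliary problem or extra comparison. It also makes transparent that Lemma \ref{e9} is really a corollary of the proof of Lemma \ref{e2}. The paper's route separates a $\mu$-independent exponential decay estimate from the control of the boundary.

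In both versions $\mu_0$ depends on $(u_0,v_0)$, scaling inversely with its size. Both rely on the standard fact you flagged, a continuous, strictly positive principal eigenfunction on the closed interval, which the paper also takes for granted.
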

\noindent{\bf Proof.} For any fixed $s > d^*$, we have $\lambda_p(s, 2h_0) > 0$. Thanks to Proposition \ref{c3}, there exists $h_1>h_0$ but close to $h_0$ such that $\lambda_1:=\lambda_p(s, 2h_1) > 0$. We next consider the following problem:
\begin{equation}\label{g1}
\begin{cases}
u_{1t}=d_1 \displaystyle\int_{-h_1}^{h_1} J_1(x-y) u_1(t,y)  \rd y - d_1 u_1 - a u_1 + e v_1, & t>0, \, x\in[-h_1, h_1], \\
v_{1t}=d_2 \displaystyle\int_{-h_1}^{h_1} J_2(x-y) v_1(t,y)  \rd y - d_2 v_1 - b v_1 + G(u_1), & t>0, \, x\in[-h_1, h_1], \\
u_1(0,x)=u_0(x), \, v_1(0,x)=v_0(x), &x\in[-h_0, h_0],\\
u_1(t,x)=v_1(t,x)=0,&x\in[-h_1, -h_0)\cup(h_0,h_1].
\end{cases}
\end{equation}
Now, we use $(\tilde{u}_1(t,x), \tilde{v}_1(t,x))$ to denote the unique solution of $\eqref{g1}$. Let $(\phi_1, \phi_2)$ be a positive and normalized eigenfunction pair corresponding to $\lambda_1$ satisfying $\|\phi_1\|_\infty = \|\phi_2\|_\infty = 1$. Denote $\delta:=\min\{e\lambda_1,G'(0)\lambda_1\}$. Then, for any $c_1 > 0$, define\
\[
(\hat{u}_1(t,x), \hat{v}_1(t,x)) = (c_1 e^{-\frac{\delta}{2} t} \phi_1, c_1 e^{-\frac{\delta}{2} t} \phi_2).
\]
A direct computation yields
\begin{align*}
&d_1 \int_{-h_1}^{h_1} J_1(x-y) \hat{u}_1(t,y)  \rd y - d_1 \hat{u}_1(t,x) - a \hat{u}_1(t,x) + e \hat{v}_1(t,x) - \hat{u}_{1t}(t,x) \\
&= c_1 e^{-\frac{\delta}{2} t} \left( d_1 \int_{-h_1}^{h_1} J_1(x-y) \phi_1(y)  \rd y - d_1 \phi_1(x) - a \phi_1(x) + e \phi_2(x) + \frac{\delta}{2} \phi_1(x) \right) \\
&\leq c_1 e^{-\frac{\delta}{2} t} \left( -\frac{\delta}{2} \phi_1(x) \right) \\
&< 0.
\end{align*}
Similarly, we have
\begin{align*}
&d_2 \int_{-h_1}^{h_1} J_2(x-y) \hat{v}_1(t,y)  \rd y - d_2\hat{v}_1(t,x) - b \hat{v}_1(t,x) + G(\hat{u}_1(t,x)) - \hat{v}_{1t}(t,x) \\
&\leq c_1 e^{-\frac{\delta}{2} t} \left( d_2 \int_{-h_1}^{h_1} J_2(x-y) \phi_2(y)  \rd y - d_2 \phi_2(x) - b \phi_2(x) + G'(0) \phi_1(x) + \frac{\delta}{2} \phi_2(x) \right) \\
&\leq 0.
\end{align*}
Choosing $c_1>0$ suitably large such that $c_1 \phi_1 > u_0$ and $c_1 \phi_2 > v_0$ in $[-h_1, h_1]$, we can apply Lemma \ref{hh} to obtain
\begin{align}\label{g2}
\tilde{u}_1(t,x) &\leq \hat{u}_1(t,x) = c_1 e^{-\frac{\delta}{2} t} \phi_1 \leq c_1 e^{-\frac{\delta}{2} t}, \nonumber\\
\tilde{v}_1(t,x) &\leq \hat{v}_1(t,x) = c_1 e^{-\frac{\delta}{2} t} \phi_2 \leq c_1 e^{-\frac{\delta}{2} t}
\end{align}
for $t>0$ and $x \in [-h_1, h_1]$. Next, we define
\begin{align*}
\tilde{h}(t) &:= h_0 + 4 \mu h_1 c_1 \left( \rho\sup_{x\in[0, 2h_1]}W(x)+1  \right) \int_0^t e^{-\frac{\delta}{2} s}  \rd s \quad \text{and} \quad \tilde{g}(t) := -\tilde{h}(t) \quad \text{for } t \geq 0,
\end{align*}
where $0 < \mu \leq \mu_0 := \frac{\delta (h_1 - h_0)}{ 8h_1 c_1 \left( \rho\sup_{x\in[0, 2h_1]}W(x)+1  \right)}$. We are going to show that $(\tilde{u}, \tilde{v}, \tilde{g}, \tilde{h})$ is an upper solution of \eqref{A}. Thanks to the choice of $\mu$, it is easy to check that
\begin{align*}
\tilde{h}(t)
&= h_0 + 4 \mu h_1 c_1 \left( \rho\sup_{x\in[0, 2h_1]}W(x)+1  \right) \frac{2}{\delta} \left(1 - e^{-\frac{\delta}{2} t}\right) \\
&< h_0 + \frac{8}{\delta} \mu h_1 c_1 \left( \rho\sup_{x\in[0, 2h_1]}W(x)+1  \right) \leq h_1 \quad \text{for any } t \geq 0.
\end{align*}
Similarly, $\tilde{g}(t) > -h_1$ for any $t \geq 0$. Thus \eqref{g1} implies that
\begin{equation*}
\begin{cases}
\tilde{u}_{1t} \geq d_1 \displaystyle\int_{\tilde{g}(t)}^{\tilde{h}(t)} J_1(x-y) \tilde{u}_1(t,y)  \rd y - d_1 \tilde{u}_1 - a \tilde{u}_1 + e \tilde{v}_1, ~ &t > 0, ~ x \in [\tilde{g}(t), \tilde{h}(t)], \\
\tilde{v}_{1t} \geq d_2 \displaystyle\int_{\tilde{g}(t)}^{\tilde{h}(t)} J_2(x-y) \tilde{v}_1(t,y)  \rd y - d_2 \tilde{v}_1 - b \tilde{v}_1 + G(\tilde{u}_1), ~ &t > 0, ~ x \in [\tilde{g}(t), \tilde{h}(t)].
\end{cases}
\end{equation*}
On the other hand,
\begin{align*}
\tilde{h}'(t) &= 4 \mu h_1 c_1 \left( \rho\sup_{x\in[0, 2h_1]}W(x)+1  \right) e^{-\frac{\delta}{2} t} \\
&\geq \mu \int_{\tilde{g}(t)}^{\tilde{h}(t)} \int_{\tilde{h}(t)}^{+\infty} J_1(x-y) \tilde{u}_1(t,x)  \rd y  \rd x + \mu \rho \int_{\tilde{g}(t)}^{\tilde{h}(t)} \tilde{v}_1(t,x) W(\tilde{h}(t)-x)  \rd x
\end{align*}
and
\begin{align*}
\tilde{g}'(t) &= -4 \mu h_1 c_1 \left(\rho\sup_{x\in[0,2h_1]} W(x) + 1\right) e^{-\frac{\delta}{2} t} \\
&\leq -\mu\int^{\tilde{h}(t)}_{\tilde{g}(t)} \int^{\tilde{g}(t)}_{-\infty} J_1(x-y) \tilde{u}_1(t,x)  \rd y  \rd x - \mu \rho \int_{\tilde{g}(t)}^{\tilde{h}(t)} \tilde{v}_1(t,x) W(x-\tilde{g}(t))  \rd x.
\end{align*}
Hence $(\tilde{u}, \tilde{v}, \tilde{g}, \tilde{h})$ is an upper solution of \eqref{A}, and it follows from
 Lemma \ref{mm} that
\begin{align*}
u(t,x) \leq \tilde{u}_1(t,x), ~ v(t,x) \leq \tilde{v}_1(t,x), ~ g(t) \geq \tilde{g}(t) \text{ and } h(t) \leq \tilde{h}(t) \quad\text{for } t \geq 0, ~ x \in [g(t), h(t)].
\end{align*}
Therefore,
$$
h_\infty-g_\infty\leq \lim_{t\rightarrow\infty}(\tilde{h}(t)-\tilde{g}(t))\leq2h_1<+\infty.
$$
The proof is completed. \qed

\begin{lemma}\label{313}
Suppose that $ \mathcal{R}_0 >1$ and $s> d^*$. Then there exists $\mu_0^* > 0$ such that  vanishing occurs when $0 < \mu \leq \mu_0^*$ and spreading occurs when $\mu > \mu_0^*$.
\end{lemma}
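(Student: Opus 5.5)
We define $\mu_0^*$ as a supremum, exactly as in Lemma \ref{e7} and Lemma \ref{xxx}:
\[
\mu_0^*:=\sup\{\mu>0:\ h_{\mu,\infty}-g_{\mu,\infty}<+\infty\}.
\]
Throughout, $(u_0,v_0)$ and $s>d^*$ are fixed, and $\lambda_p$ denotes the eigenvalue with $(d_1,d_2)=s(d_1^0,d_2^0)$.

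\textbf{Step 1: $\mu_0^*$ is positive and finite.} By Lemma \ref{e9}, vanishing occurs for all $0<\mu\le\mu_0$, so $\mu_0^*\ge\mu_0>0$.

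For finiteness we need an analogue of Lemma \ref{e6}. Its proof uses only two facts. First, vanishing for every $\mu$ forces $h_\infty-g_\infty$ to be bounded uniformly in $\mu$. Second, the monotonicity in Corollary \ref{nn} then gives a contradiction as $\mu\to\infty$.

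The first fact follows from Lemma \ref{d3}: vanishing implies $\lambda_p(g_\infty,h_\infty)\ge0$. Since $\mathcal R_0>1$, Proposition \ref{c3} says $\lambda_p<0$ on sufficiently long intervals, with the threshold length independent of $\mu$. If $\mathcal R_0\ge(1+d_1/a)(1+d_2/b)$, then $\lambda_p<0$ on every interval, so this case cannot occur when $s>d^*$. Otherwise the bound is $h_\infty-g_\infty\le 2L^*$, with $L^*$ depending only on $s$.

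With this uniform bound, the estimate in the proof of Lemma \ref{e6} applies verbatim. It shows $h_{\mu,\infty}-h_\mu(t_0)\to\infty$ as $\mu\to\infty$, which is a contradiction. Hence spreading occurs for all large $\mu$, and $\mu_0^*<\infty$.

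\textbf{Step 2: the dichotomy splits at $\mu_0^*$.} By Corollary \ref{nn}, $h_\mu$ and $-g_\mu$ are nondecreasing in $\mu$. Therefore vanishing holds for every $\mu<\mu_0^*$ and spreading holds for every $\mu>\mu_0^*$. Theorem \ref{T12} guarantees that each $\mu$ gives exactly one of the two outcomes.

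\textbf{Step 3: vanishing at $\mu=\mu_0^*$ (the main obstacle).} Suppose instead that spreading occurs at $\mu_0^*$. Then there is a time $T$ with $h_{\mu_0^*}(T)-g_{\mu_0^*}(T)>2L^*$. The solution of \eqref{A} depends continuously on $\mu$ over the finite time interval $[0,T]$; this is the same fact used in Lemma \ref{mm} and Lemma \ref{xxx}. Hence for $\mu$ slightly below $\mu_0^*$ we still have $h_\mu(T)-g_\mu(T)>2L^*$, and in particular $h_{\mu,\infty}-g_{\mu,\infty}>2L^*$. By Proposition \ref{c3} this gives $\lambda_p(g_{\mu,\infty},h_{\mu,\infty})<0$. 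Lemma \ref{d3} then shows that these $\mu$ cannot be vanishing cases, so they spread. This contradicts Step 2, and therefore vanishing occurs at $\mu=\mu_0^*$.

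The delicate points are twofold. One must verify that the continuous dependence on $\mu$ holds in the sup norm for $(g,h)$ on $[0,T]$. One must also confirm that the threshold $L^*$ is independent of $\mu$, which holds because $\lambda_p$ does not involve $\mu$.
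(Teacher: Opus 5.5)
Your proposal is correct and is essentially the argument the paper intends: the paper's proof points to Lemmas \ref{e6} and \ref{e7}. Concretely, $\mu_0^*$ is defined as a supremum, the small-$\mu$ upper solution (Lemma \ref{e9}, in place of Lemma \ref{e2}) gives $\mu_0^*>0$, the large-$\mu$ contradiction based on the $\mu$-independent bound $h_\infty-g_\infty\le 2L^*$ gives $\mu_0^*<\infty$, and continuous dependence on $\mu$ (as in Lemma \ref{xxx}) gives vanishing at $\mu=\mu_0^*$. Your extra check that $\mathcal R_0\ge(1+d_1/a)(1+d_2/b)$ is incompatible with $s>d^*$, so that $L^*$ actually exists, is a detail the paper leaves implicit.
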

\noindent{\bf{Proof.}} The desired conclusion can be obtained by applying a similar argument as in the proofs of Lemmas~\ref{e6} and~\ref{e7}; the details are omitted.\qed

\vspace{1em}
Theorem \ref{T14} clearly follows directly  from Lemmas \ref{e8} and \ref{313}.


\section*{Acknowledgments}
\noindent

Y. Chen was partially supported by a scholarship from the China Scholarship Council (2024 06180049);  Y. Du was partially supported by the Australian Research Council; W.-T. Li was partially supported by NSF of China (12531008, 12271226), and R. Wang was partially supported by NSF of China (12401258) and the Fundamental Research Funds for the Central Universities (lzujbky-2024-pd10).


\end{document}